%
%
\documentclass[11pt,reqno]{amsart}

\usepackage{amsmath,amsfonts,amssymb}
\usepackage{mathrsfs,graphicx,extpfeil}
\usepackage{indentfirst}
\usepackage{latexsym}
\usepackage{enumerate}
\usepackage{float}
\usepackage{epsfig}
\usepackage{subfigure}
\usepackage{colortbl}
\usepackage{bm}
\usepackage{ulem}
 


\topmargin-0.1in \textwidth6.in \textheight8.5in \oddsidemargin0in
\evensidemargin0in
\title[Asymptotic limits of generalized Kuramoto models]{Uniform-in-time asymptotic limits of generalized Kuramoto models}

\author[H. Cho]{Hangjun Cho}
\address[H. Cho]{\newline Department of Mathematical Sciences  \newline Seoul National University, Seoul 08826, Republic of Korea}
\email{hj2math@snu.ac.kr}

\author[S.-Y. Ha]{Seung-Yeal Ha}
\address[Seung-Yeal Ha]{\newline Department of Mathematical Sciences and Research Institute of Mathematics \newline  Seoul National University, Seoul 08826, Republic of Korea}
\email{syha@snu.ac.kr}

\author[M. Kang]{Myeongju Kang}
\address[M. Kang]{\newline School of Mathematics \newline Korea Institute for Advanced Study, Seoul 02455, Republic of Korea}
\email{mathemjkang@kias.re.kr}

\author[C. Min]{Chan Ho Min}
\address[C. Min]{\newline Department of Financial engineering \newline Ajou University, Suwon 16499, Republic of Korea}
\email{chanhomin@ajou.ac.kr}

\newtheorem{theorem}{Theorem}[section]
\newtheorem{lemma}{Lemma}[section]
\newtheorem{corollary}{Corollary}[section]
\newtheorem{proposition}{Proposition}[section]

\newtheorem{remark}{Remark}[section]

\newtheorem{definition}{Definition}[section]

\newcommand{\bbr}{\mathbb R}

\newcommand{\bbt} {\mathbb T}

\newcommand{\cD}{\mathcal D}
\newcommand{\cI}{\mathcal I}

\newcommand{\cT}{\mathcal T}
\newcommand{\cV}{\mathcal V}

\newcommand{\sgn}{\text{sgn}}

\begin{document}

\date{\today}

\subjclass[2020]{34D05, 45J05, 82C20} 

\keywords{Asymptotic limits, Complete synchronization, Continuum limit, Mean-field limit, Generalized Kuramoto model}

\thanks{\textbf{Acknowledgment.} 
The work of S.-Y. Ha was supported by NRF-2020R1A2C3A01003881, the work of M. Kang was supported by a KIAS Individual Grant (MG091601) at Korean Institute for Advanced Study and the work of C. Min was supported by NRF-2021R1G1A1095140.
}

\begin{abstract}
We study two uniform-in-time asymptotic limits for generalized Kuramoto (GK) models. For these GK type models, we first derive the uniform stability estimates with respect to initial data, natural frequency and communication network under a suitable framework, and then as direct applications of this uniform stability estimate, we establish two asymptotic limits which are valid in the whole time interval, namely uniform-in-time continuum and mean-filed limit to the continuum and kinetic GK models, respectively. In the mean-field limit setting (the number of particles tends to infinity), we show global-in-time existence of measure-valued solutions to the corresponding kinetic equation. On the other hand, in a continuum limit setting (the lattice size tends to zero), we show that the lattice GKM solutions converge to a classical solution to the continuum GK model in supremum norm. Two asymptotic limits improve earlier results for the generalized GK type models.
\end{abstract}
\maketitle \centerline{\date}

\section{Introduction} \label{sec:1}
\setcounter{equation}{0}
The jargon {\it ``Synchronization"} denotes an adjustment of rhythms in an ensemble of coupled oscillators due to weak interactions between them, and it often appears in oscillatory complex systems, e.g., synchronization of pacemaker cells and fireflies \cite{B-B,Ku2,Pe,P-R}, etc. In spite of ubiquitous presence in nature, it mathematical treatment was done only in a half century ago by two pioneers,  Arthur Winfree \cite{Wi2} and Yoshiki Kuramoto \cite{Ku2} (see \cite{C-Sm, D-M2, D-M3, D-M1, G-K-X-23, T-T, T-B} for other collective dynamics). Right after their pioneering works, phase models for synchronization were extensively studied in diverse disciplines such as  applied mathematics, biology, control theory and physics \cite{A-B, B-C-M1, B-C-M, B-D-P, C-H-J-K, C-S, D-X, D-B1, D-B, Er, H-P-R-S}.  To set up the stage, we begin with a brief description of the GK type models and their formal asymptotic limits, continuum and kinetic equations. 

Let $F:(-L, L) \longrightarrow \bbr$ be a ${\mathcal C}^2$-function satisfying the following structural conditions:
\begin{equation} \label{A-0-0}
F(-\theta) = -F(\theta) \quad \mbox{and} \quad F^{\prime}(\theta) > 0, \quad \forall~ \theta \in (-L, L),
\end{equation}
and let  $[N] := \{1, \cdots, N \}$ be the set of particles, here we identify the number $\alpha$ with $\alpha$-th particle or  oscillator at $\alpha$ lattice point.

Consider a phase ensemble consisting of $N$ GK oscillators. Let $\theta_\alpha = \theta_\alpha(t)$ be the phase of the $\alpha$-th GK oscillator on unit sphere. Then, we assume that the temporal dynamics of $\theta_\alpha$ is governed by the following Cauchy problem to the GK model:
\begin{align}
\begin{cases} \label{sys:GKM}
\displaystyle F\big( \dot\theta_\alpha \big)=\nu_\alpha +\frac{\kappa}{N} \sum_{\beta=1}^N\phi_{\alpha\beta} \sin(\theta_\beta - \theta_\alpha), \quad \forall ~t>0, \vspace{.1cm} \\
\theta_\alpha(0)=\theta_\alpha^0, \quad \forall ~\alpha \in [N],
\end{cases}
\end{align}
where $\kappa>0$, $\phi_{\alpha \beta}>0$ and $\nu_\alpha$ are the uniform coupling strength, the communication weights and the natural frequency of $\alpha$-th oscillator satisfying 
\begin{equation} \label{A-0}
\phi_{\alpha \beta} = \phi_{\beta \alpha}>0,  \quad  F^{-1} \bigg( \nu_\alpha \pm\frac{\kappa}{N} \sum_{\beta=1}^N |\phi_{\alpha\beta}| \bigg) \in (-L, L), \quad \forall~\alpha, \beta \in [N].
\end{equation}
For the case $L = \infty$ and $F(\omega) = \omega$, the GK dynamics \eqref{sys:GKM}$_1$ reduces to the Kuramoto model. Another choice makes the relativistic Kuramoto model and we refer Section \ref{sec:2.2}. Emergent behaviors of \eqref{sys:GKM} have been studied in \cite{MAHK}. We also refer to \cite{A-B, B-C-M,  C-C-H-K-K, C-H-J-K, C-S, D-X, D-B1, D-B, H-J, H-K-R, H-P-R-S, P-R, St, V-W} for the emergent dynamics of the Kuramoto model.

We can specify the spatial location by introducing a region and a lattice. Let $\mu$ be a $d$-dimensional Lebesgue measure on $\bbr^d$ and $D\subset\bbr^d$ be a compact region with a positive measure. For given region $D$, let $\Gamma \subset D$ be a collection of all lattice points in $D$ with finite index set $\Lambda$. Then, one can rewrite $\Gamma$ as $\{x_\alpha\}_{\alpha\in\Lambda}$. Let $\theta_\alpha = \theta_\alpha(t)$ be the phase of (internal) GK oscillator located at the lattice point $x_\alpha$ and at time $t$. Then, the temporal evolution of phase $\theta_\alpha$ is governed by the following ``lattice GK model":
\begin{align} \label{sys:LGKM}
\begin{dcases}
F(\dot{\theta}_\alpha) = \nu_\alpha + \displaystyle\frac{\kappa}{|\Lambda|}\sum_{\beta\in\Lambda} \phi_{\alpha\beta}\sin(\theta_\beta - \theta_\alpha),\quad \forall ~t>0, \\
\theta_\alpha(0) = \theta^{0}_\alpha, \quad \forall~\alpha\in\Lambda,
\end{dcases}
\end{align}
where we assume that the system parameters satisfy the same structural assumptions as in \eqref{A-0}.  

Note that two models \eqref{sys:GKM} and \eqref{sys:LGKM} take the same structure except the index sets $[N]$ and $\Lambda$. This is why we can treat both systems \eqref{sys:GKM} and  \eqref{sys:LGKM}  in a common platform.
 \newline

In this paper, we address the following issues for the GK model \eqref{sys:GKM} and the lattice GK model \eqref{sys:LGKM}:
\begin{itemize}
\item (Q1): For $p\in[1, \infty]$, under what conditions on system parameters and initial data, are agent-based models \eqref{sys:GKM} and \eqref{sys:LGKM} uniformly $\ell_p$-stable with respect to initial data and system parameters?
\vspace{0.2cm}

\item (Q2) Can we rigorously derive the continuum GK model \eqref{sys:CGKM} from the lattice GK model \eqref{sys:LGKM} ?
\vspace{0.2cm}

\item (Q3) Can we rigorously derive the kinetic GK model \eqref{sys:KGKM} from the GK model \eqref{sys:GKM} ?
\end{itemize}
In Section \ref{sec:4} and Section \ref{sec:5}, we will see that the last two issues crucially rely on the first issue (Q1). Thus, the issue is whether uniform-in-time stability estimates hold for the aforementioned models or not. For the Kuramoto model $\eqref{sys:GKM}$ with $F(\omega) = \omega$, such a uniform stability estimates was successfully obtained in \cite{H-K-P-Z}. However, for our two models under consideration, due to the nonlinear nature of $F$, the same arguments employed in the Kuramoto model can not be employed in the current setting. Thus, the resolution of the uniform stability is the main novelty of this work.  \newline

The main results of this paper are three-fold. First, we are concerned with the uniform $\ell_p$-stability of the GK model \eqref{sys:GKM}. For this, we restrict the diameter of initial data on quadrant and show that when coupling strength $\kappa$ is sufficiently large, velocity of every oscillator converges to the same value exponentially fast (see Lemma \ref{L3.1}), and then, we use this exponential convergence to derive the uniform $\ell_p$-stability. More precisely, for two solutions $\Theta = \Theta(t)$ and $\tilde\Theta = \tilde\Theta(t)$ to \eqref{sys:GKM} subject to the initial data, natural frequency pair, and communication matrix pair $(\Theta^0, \cV, \Phi)$ and $(\tilde\Theta^0, \tilde\cV, \tilde\Phi)$, respectively and $p\in[1,\infty]$, we have
\begin{equation} \label{A-1}
\sup_{0 \leq t < \infty} \| \Theta(t) - \tilde\Theta(t) \|_p \lesssim \| \Theta^0 - \tilde\Theta^0 \|_p + \|\mathcal V - \tilde{\mathcal V}\|_p +\frac{\|\Phi - \tilde\Phi\|_p}{N^{\frac1p}},
\end{equation}
where we used handy notation $\Theta, {\mathcal V}$ and $\Phi$ (see Section \ref{sec:2.1}), and we refer to Corollary \ref{C3.1} for details. Uniform $\ell_p$-stability problem of the Kuramoto model has been extensively studied in literature \cite{C-H-J-K, H-K-P-R-S1,H-K-P-Z}. In these literature, Galilean invariance and constant velocity of the averaged process $\big( \sum_{\alpha=1}^N \dot\theta_\alpha = \sum_{\alpha=1}^N \nu_\alpha \big)$ are crucially used, and sufficient framework for uniform stability is formulated in terms of initial data and natural frequencies. Unfortunately, Galilean invariance and constant velocity of the averaged process do not hold for the GK model \eqref{sys:GKM} as it is. So we need additional conditions for the coupling strength $\kappa$ and more refined arguments. Moreover, we consider the effect of communication matrix on the stability to utilize it in continuum limit (see Section \ref{sec:4}). 

Second, we establish the continuum limit of  \eqref{sys:LGKM} by showing that a solution for the lattice GK model \eqref{sys:LGKM} converges to a solution of the corresponding continuum model \eqref{sys:CGKM} in supremum norm sense in the whole time interval (see Theorem \ref{T4.1}). More precisely, by letting the lattice size tend to zero, we can formally obtain a differential-integro equation for the phase field $\theta = \theta(t,x)$ on $\bbr_+ \times D$:
\begin{align} \label{sys:CGKM}
\begin{aligned}
\begin{cases}
F(\partial_t \theta(t,x)) = \nu(x) + \displaystyle\frac{\kappa}{\mu(D)}\int_D \phi(x,z)\sin(\theta(t,z)-\theta(t,x)) d\mu(z), \quad \forall ~t>0, \vspace{.1cm} \\
\theta(0,x) = \theta^0(x), \quad \mbox{a.e.} ~x \in D,
\end{cases}
\end{aligned}
\end{align}
which will be called the ``continuum GK model" hereafter. Here, $\nu = \nu(x)$ is the natural frequency function and $\phi = \phi(x,z)$ is the communication weight function, respectively. The function $\phi(x,z)$ is positive: $\phi(x,z)>0$ for all $x,z$ in $D$. For the special setting:
\begin{align*}
D = [0, 1], \quad L = \infty, \quad F(\omega) = \omega, \quad \phi \equiv 1,
\end{align*}
the continuum GK equation \eqref{sys:CGKM}$_1$ reduces to the continuum Kuramoto model studied in \cite{Er, Med, Tr1, Tr2}:
\begin{align} \label{sys:CKM}
\partial_t \theta(t,x) =\nu(x) + \kappa_1 \int_0^{1} \sin(\theta(t,y)-\theta(t,x)) dy, \quad \forall ~(t,x) \in \bbr_+ \times [0,1],
\end{align}
and the limiting process from the lattice Kuramoto model to the continuum Kuramoto model \eqref{sys:CKM} is also rigorously justified in \cite{Er, Med, Tr1, Tr2}. To the best of our knowledge, the rigorous derivation of continuum model from lattice model in $L^p$ sense for finite $p$ in a finite or whole time intervals has been treated in various works \cite{H-K-M1, H-K-M2, H-K-P-R-S2, Med} by using the Lebesgue differentiation theorem. However, under the a priori condition on domain $D$ (see Section \ref{sec:4.1}) and using the aforementioned uniform $\ell_p$-stability of the GK model, we can prove the uniform-in-time convergence in $L^{\infty}$. And then, we can also derive $L^\infty$-contraction of the solution operator. More precisely, let $\theta$ and $\tilde\theta$ be two solutions to \eqref{sys:CGKM} corresponding to initial data $\theta^0$ and $\tilde{\theta}^0$, respectively. Then, we have $L^\infty$-contraction (see Proposition \ref{P4.2}):
\begin{align*}
\sup_{t\geq0} \| \theta(t,\cdot) - \tilde\theta(t,\cdot) \|_{L^\infty(D)}  \le\| \theta^0 - \tilde\theta^0 \|_{L^\infty(D)}.
\end{align*}

Third, we derive the kinetic GK model \eqref{sys:KGKM} from the GK model \eqref{sys:GKM} in the whole time interval using the methodology employed in \cite{H-K-P-R-S1,H-K-P-Z, H-K-Z} (see Theorem \ref{T5.1}). In this procedure, we also use the uniform $\ell_p$-stability \eqref{A-1} of the GK model. Consider the following situation:
 \[ \phi_{\alpha\beta} \equiv 1, \quad \alpha, \beta \in [N] \quad \mbox{and} \quad \lim_{x\to \pm L} F(x) = \pm\infty. \]
Letting $N$ tend to infinity, we use a formal BBGKY hierarchy argument with the GK model \eqref{sys:GKM}$_1$ to find a formal mean-field equation:
\begin{align} \label{sys:KGKM}
\begin{cases}
\displaystyle \partial_t \rho +\partial_\theta({\mathfrak L}[\rho]\rho) = 0, \quad \forall ~(t, \theta, \nu) \in \bbr_+\times\bbt\times\bbr, \vspace{.1cm}\\
\displaystyle {\mathfrak L}[\rho](t, \theta, \nu) := G\bigg( \nu +\kappa\int_{\bbt\times\bbr} \sin(\theta_*-\theta) \rho(t, \theta_*, \nu_*) d\theta_*d\nu_* \bigg),
\end{cases}
\end{align}
which will be called the ``kinetic GK model" henceforward. Here, $G$ is the inverse function of $F$. Then our third result deals with the verification of this limiting process rigorously (see Section \ref{sec:5}). For the Kuramoto model with $L=\infty$ and $F(\omega) = \omega$, the kinetic GK equation \eqref{sys:KGKM} reduces to the kinetic Kuramoto model in \cite{La}:
\begin{align} \label{sys:KKM}
\begin{cases}
\displaystyle \partial_t \rho +\partial_\theta({\mathfrak L}[\rho]\rho) = 0, \quad \forall ~(t, \theta, \nu) \in \bbr_+\times\bbt\times\bbr, \vspace{.1cm}\\
\displaystyle {\mathfrak L}[\rho](t, \theta, \nu) := \nu +\kappa\int_{\bbt\times\bbr} \sin(\theta_*-\theta) \rho(t, \theta_*, \nu_*) d\theta_*d\nu_*,
\end{cases}\end{align}
and the limiting process from the Kuramoto model to the kinetic Kuramoto model \eqref{sys:KKM} is also rigorously justified in \cite{La} in any finite-time interval.

\vspace{.2cm}

The rest of this paper is organized as follows. In Section \ref{sec:2}, we review the previous results for the GK model, and  present basic estimates for the GK model. In Section \ref{sec:3}, we study the uniform $\ell_p$-stability estimate of the GK model which plays a key role in the following sections. In Section \ref{sec:4}, we present a sufficient framework for the uniform-in-time continuum limit and derive $L^\infty$-contraction of the resulting continuum GK model. In Section \ref{sec:5}, we also provide a sufficient framework for the uniform-in-time mean-field limit. Finally, Section \ref{sec:6} is devoted to a brief summary of our main results and  remaining issues for a future work. In Appendix \ref{App-A}, we provide a lengthy estimate arising from Lemma \ref{L3.2}. In Appendix \ref{App-B}, we provide a proof of Theorem \ref{T4.1}. In Appendix \ref{App-C}, we briefly present a finite-in-time continuum limit for the lattice GK model for a generic initial data. 

%
%
%
%
\section{Preliminaries} \label{sec:2}
\setcounter{equation}{0}
In this section, we first present several simplified notation to be used throughout the paper, and we recall previous results on the emergent dynamics of \eqref{sys:GKM}.  Finally, we provide several estimates to be used in later sections. 
\subsection{Gallery of notation} \label{sec:2.1} Below, we collect all simplified notation to be used throughout the paper. We set the state set and state matrix as follows.
\begin{align*}
& \Theta: = \{ \theta_\alpha \}_{\alpha=1}^{N} , \quad \Theta^0 := \{ \theta_\alpha^0 \}_{\alpha=1}^{N}, \quad \Omega := \{ \omega_\alpha \}_{\alpha=1}^{N}, \\
& \Omega^0 := \{ \omega^0_\alpha \}_{\alpha=1}^{N}, \quad  \mathcal{V}:=\{ \nu_\alpha \}_{\alpha=1}^{N}, \quad \Phi := (\phi_{\alpha\beta})_{1 \leq \alpha, \beta \leq N},
\end{align*}
where $\omega_i := {\dot \theta}_i$. Moreover, as long as there is no confusion, we also use the same notation $\Theta, \Omega = \dot\Theta, {\mathcal V}$ to denote state vectors as well:
\[ \Theta = (\theta_1, \cdots, \theta_N), \quad \Omega  = ( \omega_1, \cdots, \omega_N), \quad  \mathcal{V} = (\nu_1, \cdots, \nu_N). \]
For given $G = F^{-1}$, we choose constants $a_G, b_G \in (-L, L)$ such that
\begin{align*}
G \bigg( \nu_\alpha \pm\frac{\kappa}{N} \sum_{\beta=1}^N |\phi_{\alpha\beta}| \bigg) \in [a_G, b_G], \quad \forall ~\alpha \in [N],
\end{align*}
and we also define several constants as follows.
\begin{align*}
& m_{F'} := \min_{a_G \le x \le b_G} F'(x), \quad M_{F'} := \max_{a_G \le x \le b_G} F'(x), \quad m_{G'}:= \min_{F(a_G)\le y\le F(b_G)} G'(y), \\
& M_{G'} := \max_{F(a_G)\le y\le F(b_G)} G'(y), \quad  m_\Phi := \min_{1\leq \alpha, \beta \leq N} \phi_{\alpha\beta}, \quad  \nu_c := \frac1N \sum_{\alpha=1}^N \nu_\alpha.
\end{align*}
Next, we introduce $\ell_p$-norms and diameter functionals for finite-dimensional vector and matrix as follows.  Let  
\[ \mathbf x = (x_1, \cdots, x_d)\in\bbr^d, \quad A = (a_{\alpha\beta})_{1\leq \alpha, \beta \leq d}\in\bbr^{d\times d}, \quad p \in [1, \infty]. \]
Then, we define
\begin{align*}
\|\mathbf x\|_p :=
\begin{cases}
\displaystyle \bigg( \sum_{\alpha=1}^d |x_\alpha|^p \bigg)^{\frac1p}, & p\in[1, \infty), \vspace{.1cm}\\
\displaystyle \max_{1\leq\alpha\leq d} |x_\alpha|, & p=\infty,
\end{cases}
\quad \|A\|_p :=
\begin{cases}
\displaystyle \bigg( \sum_{\alpha=1}^d |a_{\alpha\beta}|^p \bigg)^{\frac1p}, & p\in[1, \infty), \vspace{.1cm}\\
\displaystyle \max_{1\leq\alpha\leq d} |a_{\alpha\beta}|, & p=\infty.
\end{cases}
\end{align*}
In addition, diameters $\cD(\mathbf x)$ and $\cD(A)$ are defined by
\begin{align*}
\cD(\mathbf x) := \max_{1\leq \alpha\leq d} x_\alpha -\min_{1\leq \alpha\leq d} x_\alpha, \quad \cD(A) := \max_{1\leq\alpha,\beta\leq d} a_{\alpha\beta} -\min_{1\leq\alpha,\beta\leq d} a_{\alpha\beta}.
\end{align*}
Similarly, for real-valued functions $f$ and $g$ defined on the sets $D$ and $D \times D$, respectively, we define
\begin{align*}
& \|f\|_p :=
\begin{cases}
\displaystyle \bigg( \int_D |f(x)|^p d\mu(x) \bigg)^{\frac1p}, & p\in[1, \infty), \vspace{.1cm}\\
\displaystyle \sup_{x\in D} |f(x)|, & p=\infty,
\end{cases} \\
& \|g\|_\infty := \sup_{x, y\in D} |g(x, y)|, \quad \|g\|_{L^\infty_xL^1_y} := \sup_{x\in D} \int_D |g(x, y)| d\mu(y),
\end{align*}
and diameter functionals $\cD(f)$ and $\cD(g)$ for the ranges of $f$ and $g$:
\begin{align*}
\cD(f) := \sup_{x\in D} f(x) -\inf_{x\in D} f(x), \quad \cD(g) := \sup_{x, y\in D} g(x, y) -\inf_{x, y\in D} g(x, y).
\end{align*}

\subsection{Previous results} \label{sec:2.2} 
In this subsection, we briefly review the previous results in \cite{AHKS,MAHK} on the emergent dynamics of  \eqref{sys:GKM} with all-to-all couplings $\phi_{\alpha \beta} = 1$.  Before we move on,  we briefly comment on the modeling aspect of the generalized model. Following the relativistic Kuramoto model in \cite{MAHK}, the authors derived  \eqref{sys:GKM} with the following explicit $F$:
\[   \Gamma(\omega):= \frac{1}{\sqrt{1 - \frac{|\omega|^2}{c^2}}}, \quad  F(\omega) = \omega \Gamma(\omega) \bigg(1+\frac{\Gamma(\omega)}{c^2}\bigg), \quad L = c. \]
One can check that this explicit form satisfies the relations \eqref{A-0-0}.  We also refer to \cite{B-H-K} for the one-dimensional consensus model with the same structure as in  \eqref{sys:GKM} which can be derived from the relativistic Cucker-Smale model. Moreover, system \eqref{sys:GKM} with a memory type functional form $F$ also appears in the fractional Kuramoto model (see \cite{H-J}). \newline

Now, we consider a homogeneous GK ensemble with the same natural frequency:
\begin{align*}
\nu_\alpha = \nu, \quad \forall ~\alpha \in [N].
\end{align*}
Then, the dynamics of the homogeneous ensemble $\Theta$ is governed by the following Cauchy problem:
\begin{equation}
\begin{cases} \label{B-0}
\displaystyle \dot\theta_\alpha  = G \Big(  \nu +\frac{\kappa}{N} \sum_{\beta=1}^N \sin(\theta_\beta -\theta_\alpha) \Big), \quad \forall~t>0, \\
\displaystyle \theta_\alpha(0) = \theta_\alpha^{0}, \quad \forall~\alpha \in [N].
\end{cases}
\end{equation}
where $G = F^{-1}$ is the inverse of $F$ satisfying \eqref{A-0-0}.  Then, the emergent dynamics of \eqref{B-0} can be summarized in the following proposition. 
\begin{proposition} \label{P2.1}
\cite{MAHK}
Suppose initial data and coupling strength satisfy
\[   {\mathcal D}(\Theta^{0}) < \pi \quad \mbox{and} \quad \kappa > 0, \]
and let $\Theta = \Theta(t)$ be a global smooth solution to \eqref{B-0}. Then, the following assertions hold.
\begin{enumerate}
\item
(Existence of a trapping set): The diameter functional ${\mathcal D}(\Theta)$ is contractive:
\[
{\mathcal D}(\Theta(t)) \leq{\mathcal D}(\Theta^{0}), \quad \forall ~t>0.
\]
\item
(Exponential synchronization):~ there exists a positive constant $\Lambda_0 = \Lambda_0(\Theta^{0}, \nu, \kappa, G^{\prime})$ such that  
\[ {\mathcal D}(\Theta(t)) \leq e^{-\Lambda_0 t} {\mathcal D}(\Theta^{in}), \quad \forall~t > 0. \]
\end{enumerate}
\end{proposition}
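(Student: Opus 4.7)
The natural approach is to study the evolution of $\cD(\Theta(t)) = \theta_M(t) - \theta_m(t)$ through its upper Dini derivative $D^+$, where $\theta_M(t)$ and $\theta_m(t)$ denote the instantaneous maximal and minimal phases. Two structural facts drive the whole argument: since $|\sin| \le 1$, the argument of $G$ at every node lies in the compact interval $[\nu - \kappa, \nu + \kappa]$, so $G' > 0$ admits a uniform lower bound $m_{G'}$ there; and as long as $\cD(\Theta) < \pi$, the sine differences carry definite signs that mirror the ordering of the phases.

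For part (1), fix $t$ and pick indices $\alpha_M, \alpha_m$ realizing the max and the min. Since $\theta_\beta - \theta_{\alpha_M} \in (-\pi, 0]$ and $\theta_\beta - \theta_{\alpha_m} \in [0, \pi)$, one obtains $\sin(\theta_\beta - \theta_{\alpha_M}) \le 0 \le \sin(\theta_\beta - \theta_{\alpha_m})$ for every $\beta$. Hence the $G$-argument at $\alpha_M$ is at most $\nu$ while that at $\alpha_m$ is at least $\nu$, and monotonicity of $G$ gives $\dot\theta_{\alpha_M} \le G(\nu) \le \dot\theta_{\alpha_m}$. Therefore $D^+ \cD(\Theta) \le 0$, which proves contractivity and in particular preserves the strict bound $\cD(\Theta(t)) \le \cD(\Theta^0) < \pi$ for all $t > 0$.

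For part (2), the same two indices give
\begin{align*}
D^+ \cD(\Theta) \le G(X_M) - G(X_m) \le m_{G'}\bigl(X_M - X_m\bigr),
\end{align*}
where $X_M, X_m$ are the arguments of $G$ at $\alpha_M, \alpha_m$ (the second inequality uses $X_M \le X_m$ together with $G' \ge m_{G'}$, noting that multiplication by a nonpositive quantity reverses the inequality). The sum-to-product identity
\begin{align*}
\sin(\theta_\beta - \theta_{\alpha_M}) - \sin(\theta_\beta - \theta_{\alpha_m}) = -2\cos\!\Big(\theta_\beta - \tfrac{\theta_{\alpha_M} + \theta_{\alpha_m}}{2}\Big)\sin\!\Big(\tfrac{\cD(\Theta)}{2}\Big),
\end{align*}
together with the observation that the cosine argument lies in $[-\cD(\Theta)/2, \cD(\Theta)/2] \subset (-\pi/2, \pi/2)$, bounds each cosine below by $\cos(\cD(\Theta^0)/2) > 0$. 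Summing over $\beta$ collapses $2\cos(\cD/2)\sin(\cD/2) = \sin \cD$ and yields $X_M - X_m \le -\kappa \sin \cD(\Theta)$. Finally, concavity of $\sin$ on $[0, \pi]$ provides $\sin \cD(\Theta) \ge \frac{\sin \cD(\Theta^0)}{\cD(\Theta^0)}\cD(\Theta)$, so the closed linear differential inequality
\begin{align*}
D^+ \cD(\Theta) \le -\Lambda_0 \cD(\Theta), \qquad \Lambda_0 := m_{G'}\,\kappa\,\frac{\sin \cD(\Theta^0)}{\cD(\Theta^0)},
\end{align*}
follows, and a Gr\"onwall argument delivers the exponential synchronization.

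The main subtlety is the absence of Galilean invariance for \eqref{B-0}: unlike the classical Kuramoto model, one cannot pass to a comoving frame to exploit linearity, so the nonlinearity of $G$ must be absorbed directly into the constant $m_{G'}$ coming from the a priori range $[\nu - \kappa, \nu + \kappa]$, which in turn explains the dependence $\Lambda_0 = \Lambda_0(\Theta^0, \nu, \kappa, G')$. A secondary point is justifying the pointwise computation of $D^+ \cD(\Theta)$ by freezing the extremal indices; this rests on standard envelope/Danskin-type arguments routine in the Kuramoto literature, which I would cite rather than re-derive.
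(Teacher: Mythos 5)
The paper itself does not prove Proposition \ref{P2.1} --- it is quoted from \cite{MAHK} --- but your argument is correct and uses exactly the toolkit the paper deploys for its own analogous estimates (Lemmas \ref{L2.1} and \ref{L2.3}): extremal indices with Dini derivatives, monotonicity of $G$ with the uniform bound $m_{G'}$ on the compact range $[\nu-\kappa,\nu+\kappa]$, and the sum-to-product/concavity estimates for the sine, so the constant $\Lambda_0=m_{G'}\kappa\sin\cD(\Theta^0)/\cD(\Theta^0)$ you obtain has the stated dependence. The only point worth tightening is the bootstrapping in part (1): the sign argument requires $\cD(\Theta(t))\le\pi$ at the time in question, so one should run the standard continuity/contradiction argument (as in the proof of Lemma \ref{L2.1}) rather than infer the invariance of $\{\cD<\pi\}$ from the differential inequality it is used to establish.
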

\begin{remark} \label{R2.1}
For the globally coupled Kuramoto model, the assertions in Proposition \ref{P2.1} were already studied in \cite{C-H-J-K}.  Moreover, the restriction on the initial data can also be removed to generic initial data (see \cite{D-X}). \end{remark}
Next, we consider a heterogeneous GK ensemble with distributed natural frequencies whose dynamics is governed by the following Cauchy problem:
\begin{align} \label{B-1}
\begin{cases}
\displaystyle \dot\theta_\alpha = G\bigg( \nu_\alpha +\frac{\kappa}{N} \sum_{\beta = 1}^N \sin(\theta_\beta -\theta_\alpha) \bigg), \quad \forall~t>0, \\
\displaystyle \theta_\alpha(0) = \theta_\alpha^{0}, \quad \forall~\alpha \in [N]. 
\end{cases}
\end{align}
Then the emergent dynamics of \eqref{B-1} can be summarized in the following proposition. 
\begin{proposition} \label{P2.2}
\cite{MAHK}
Suppose that initial data, natural frequency, and coupling strength satisfy
\[ \kappa > \mathcal D(\mathcal V) > 0, \quad 0<\mathcal D(\Theta^0) < \pi-\theta_*, \quad  \theta_* :=\sin^{-1} \bigg( \frac{{\mathcal D}(\mathcal V)}{\kappa} \bigg) \in\bigg( 0, ~\frac{\pi}{2} \bigg),
\]
and let $\Theta$ be the global smooth solution to \eqref{B-1}. Then, the following assertions hold.
\begin{enumerate}
\item
(Existence of a trapping set):~there exists $t_* \geq 0$ such that, for all $t\geq t_*$,
\[
D(\Theta(t)) \leq \max\{ \theta_*, ~\min\{ D(\Theta^{0}), ~\pi-D(\Theta^{0}) \}\} \leq \frac{\pi}{2}.
\]
\item
(Complete synchronization):~relative frequencies tend to zero as $t \to \infty$:
\[
\lim_{t\to\infty} \big| \dot\theta_i(t) -\dot\theta_j(t) \big| = 0.
\]
\end{enumerate}
\end{proposition}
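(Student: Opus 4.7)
The plan is to reduce both assertions to scalar differential inequalities for the extremal phase-diameter $\mathcal D(\Theta(t))$ and then for the frequency-diameter $\mathcal D(\Omega(t))$, following the classical Kuramoto extremal-index strategy but handling the nonlinearity of $G = F^{-1}$ via the mean value theorem together with the a priori range bound supplied by \eqref{A-0}.

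For the trapping estimate in (1), I would let $M(t), m(t)$ be indices realizing $\theta_M = \max_\alpha \theta_\alpha$ and $\theta_m = \min_\alpha \theta_\alpha$, so that $\frac{d^+}{dt}\mathcal D(\Theta) = \dot\theta_M - \dot\theta_m$ almost everywhere. Subtracting the two ODEs and applying the MVT to $G$ yields
\[
\dot\theta_M - \dot\theta_m \;=\; G'(\xi)\Bigl[(\nu_M - \nu_m) + \tfrac{\kappa}{N}\sum_\beta \{\sin(\theta_\beta - \theta_M) - \sin(\theta_\beta - \theta_m)\}\Bigr].
\]
The sum-to-product identity rewrites each bracketed pair as $-2\cos\bigl(\theta_\beta - \tfrac{\theta_M+\theta_m}{2}\bigr)\sin\bigl(\tfrac{\mathcal D(\Theta)}{2}\bigr)$, and since $\theta_\beta \in [\theta_m, \theta_M]$ with $\mathcal D(\Theta) < \pi$, each cosine factor is bounded below by $\cos(\mathcal D(\Theta)/2) > 0$. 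This produces the key inequality $\frac{d^+}{dt}\mathcal D(\Theta) \leq G'(\xi)\bigl[\mathcal D(\mathcal V) - \kappa \sin(\mathcal D(\Theta))\bigr]$ with $G' > 0$ by \eqref{A-0-0}. Since the bracket is strictly negative on $(\theta_*, \pi - \theta_*)$, a phase-line analysis shows that if $\mathcal D(\Theta^0) < \pi - \theta_*$, then $\mathcal D(\Theta)$ is trapped in $[0, \pi - \theta_*)$ and either already lies in $[0, \theta_*]$ or decreases into it in finite time $t_*$, giving the stated bound.

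For the complete synchronization in (2), the trapping bound $D_\infty \leq \pi/2$ holds for $t \geq t_*$. Differentiating the identity $F(\omega_\alpha) = \nu_\alpha + \frac{\kappa}{N}\sum_\beta \sin(\theta_\beta - \theta_\alpha)$ in time gives
\[
F'(\omega_\alpha)\,\dot\omega_\alpha \;=\; \tfrac{\kappa}{N}\sum_\beta \cos(\theta_\beta - \theta_\alpha)(\omega_\beta - \omega_\alpha),
\]
where each $\cos(\theta_\beta - \theta_\alpha) \geq \cos(D_\infty) > 0$ inside the trapping region (the borderline case $D_\infty = \pi/2$ relaxes strictly for $t > t_*$ by the same phase-line argument). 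Repeating the extremal-index maneuver for $\mathcal D(\Omega) = \omega_{M'} - \omega_{m'}$ and retaining the dominant $\beta = m'$ (resp.~$\beta = M'$) terms in the respective sums produces a Grönwall inequality $\frac{d^+}{dt}\mathcal D(\Omega) \leq -C\,\mathcal D(\Omega)$ with an explicit $C > 0$ depending on $\kappa, D_\infty, N, M_{F'}$, hence $\mathcal D(\Omega(t)) \to 0$ exponentially, which is exactly complete synchronization.

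The main obstacle is that the multiplicative factors $G'(\xi)$ and $F'(\omega_\alpha)$ appearing in the two differential inequalities can, in principle, degenerate as $\omega_\alpha$ approaches $\pm L$ (for example, $F'(\omega) \to \infty$ as $\omega \to \pm c$ in the relativistic case), which would destroy the Grönwall closure available in the standard Kuramoto analysis where $F$ is the identity. Assumption \eqref{A-0} is imposed precisely to preclude this pathology: it confines each $\omega_\alpha(t) = G\bigl(\nu_\alpha + \tfrac{\kappa}{N}\sum_\beta \sin(\theta_\beta - \theta_\alpha)\bigr)$ to the compact subinterval $[a_G, b_G] \subset (-L, L)$ for all $t$, so the uniform bounds $m_{F'} \leq F'(\omega_\alpha) \leq M_{F'}$ and $m_{G'} \leq G'(\xi) \leq M_{G'}$ hold throughout the evolution, closing both differential inequalities and yielding the conclusions of Proposition~\ref{P2.2}.
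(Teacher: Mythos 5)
Your overall strategy is sound and in fact coincides with the one this paper uses for its own closely related results: the paper does not prove Proposition \ref{P2.2} (it is quoted from \cite{MAHK}), but Lemma \ref{L2.1} and Lemma \ref{L2.3} establish the general-network analogues of assertions (1) and (2) by exactly the extremal-index/Dini-derivative computations you describe, including the reduction to the second-order system \eqref{sys:GKM2} and the observation that \eqref{A-0} confines every $\omega_\alpha$ to $[a_G,b_G]$, so that $F'$ and $G'$ are uniformly bounded above and below along the flow. Your derivation of $\frac{d^+}{dt}\mathcal{D}(\Theta)\le G'(\xi)\big[\mathcal{D}(\mathcal{V})-\kappa\sin\mathcal{D}(\Theta)\big]$ via the sum-to-product identity is a correct variant of the paper's estimate \eqref{B-1-4} (which instead uses $0<\sin(a+b)<\sin a+\sin b$), and your Gr\"onwall inequality for $\mathcal{D}(\Omega)$ is precisely the computation in Lemma \ref{L2.3}; keeping only the $\beta=m'$ and $\beta=M'$ terms merely degrades the rate by a factor of $N$ without affecting the conclusion.

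One step as written would fail, though it is easily repaired. You assert that $\mathcal{D}(\Theta)$ ``decreases into $[0,\theta_*]$ in finite time $t_*$.'' This is false in general: $\theta_*$ is a zero of the comparison vector field $y\mapsto m_{G'}\big(\mathcal{D}(\mathcal{V})-\kappa\sin y\big)$, and solutions starting in $(\theta_*,\pi-\theta_*)$ approach $\theta_*$ only asymptotically, never reaching it in finite time. Fortunately the proposition does not claim entry into $[0,\theta_*]$; it claims the weaker bound $\mathcal{D}(\Theta(t))\le\max\{\theta_*,\min\{\mathcal{D}(\Theta^0),\pi-\mathcal{D}(\Theta^0)\}\}$, and this does follow from what you correctly established: the bracket is strictly negative on $(\theta_*,\pi-\theta_*)$, so $\mathcal{D}(\Theta)$ is non-increasing there and each sublevel set $[0,x]$ with $x\in[\theta_*,\pi-\theta_*)$ is forward invariant. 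When $\mathcal{D}(\Theta^0)\le\pi/2$ one simply takes $t_*=0$; when $\mathcal{D}(\Theta^0)>\pi/2$ one additionally uses that $\sin$ is bounded away from $\sin\theta_*$ on the compact interval $[\pi-\mathcal{D}(\Theta^0),\mathcal{D}(\Theta^0)]$ to reach the level $\pi-\mathcal{D}(\Theta^0)$ in finite time $t_*$. With that replacement assertion (1) is complete, and your treatment of the borderline case $\mathcal{D}_\infty=\pi/2$ in assertion (2) is then also fine.
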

\begin{remark} \label{R2.2}
As mentioned in Remark \ref{R2.1}, for the Kuramoto model, the above assertions were already obtained in \cite{C-H-J-K} , and the complete synchronization estimate (assertion (ii)) has been shown for generic data in a large coupling regime using the uniform boundedness of fluctuations around the average phase and gradient-flow formulation of the Kuramoto model (see \cite{H-K-R}).
\end{remark}

\subsection{Basic estimates} \label{sec:2.3} In this subsection, we provide several estimates for \eqref{sys:GKM}. Recall that in \cite{MAHK}, uniform boundedness of solutions and complete synchronization are obtained for all-to-all coupling, i.e. $\phi_{\alpha\beta} \equiv 1$. In what follows, we present analogous results for general symmetric network topology $\Phi = (\phi_{\alpha \beta})$.
\begin{lemma} \label{L2.1}
Suppose that there exists $\theta_* \in(0, \pi/2)$ such that initial data and system parameters satisfy
\begin{equation} \label{B-1-0}
\kappa  > \frac{\mathcal D(\cV)}{m_\Phi \sin\theta_*},  \quad \mathcal{D}(\Theta^0)\le\theta_*,
\end{equation}
and let $\Theta = \Theta(t)$ be a global smooth solution to \eqref{sys:GKM}. Then, the phase diameter is uniformly bounded:
\begin{equation} \label{B-1-1}
\sup_{0 \leq t < \infty}\mathcal{D}(\Theta(t)) \le \theta_*.
\end{equation}
\end{lemma}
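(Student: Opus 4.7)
The plan is to adapt the standard phase-diameter contraction argument for the Kuramoto model, using the strict monotonicity of $F$ to transfer inequalities from $F(\dot\theta_\alpha)$ back to $\dot\theta_\alpha$. Let $\theta_M(t)$ and $\theta_m(t)$ denote Lipschitz selections of the maximum and minimum phases, so that $\mathcal D(\Theta(t)) = \theta_M(t) - \theta_m(t)$ is locally Lipschitz, and let $M, m$ be indices realizing these extrema at a point of differentiability. From \eqref{sys:GKM} I would write
\begin{equation*}
F(\dot\theta_M) - F(\dot\theta_m) = (\nu_M - \nu_m) + \frac{\kappa}{N} \sum_{\beta=1}^N \bigl[\phi_{M\beta}\sin(\theta_\beta - \theta_M) - \phi_{m\beta}\sin(\theta_\beta - \theta_m)\bigr].
\end{equation*}

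The first term is bounded by $\mathcal D(\cV)$. For the second term, as long as $\mathcal D(\Theta(t)) \le \theta_* < \pi/2$, both $\theta_M - \theta_\beta$ and $\theta_\beta - \theta_m$ lie in $[0, \theta_*] \subset [0, \pi/2)$, so each bracketed quantity is nonpositive and we may replace $\phi_{M\beta}, \phi_{m\beta}$ by the lower bound $m_\Phi$ to obtain
\begin{equation*}
F(\dot\theta_M) - F(\dot\theta_m) \leq \mathcal D(\cV) - \frac{\kappa m_\Phi}{N}\sum_{\beta=1}^N \bigl[\sin(\theta_M - \theta_\beta) + \sin(\theta_\beta - \theta_m)\bigr].
\end{equation*}
The key algebraic step is the sum-to-product identity: setting $A = \theta_M - \theta_\beta$ and $B = \theta_\beta - \theta_m$, we have $\sin A + \sin B = 2\sin\frac{A+B}{2}\cos\frac{A-B}{2}$ with $A+B = \mathcal D(\Theta)$ and $|A-B| \le \mathcal D(\Theta)$, which yields the pointwise lower bound $\sin A + \sin B \ge \sin \mathcal D(\Theta)$ for every $\beta$. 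Summing over $\beta$ and dividing by $N$ leaves
\begin{equation*}
F(\dot\theta_M) - F(\dot\theta_m) \leq \mathcal D(\cV) - \kappa m_\Phi \sin \mathcal D(\Theta(t)).
\end{equation*}

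To conclude \eqref{B-1-1}, I would run a boundary-crossing argument: suppose for contradiction that there is a first time $t_0 > 0$ where $\mathcal D(\Theta(t_0)) = \theta_*$ and $\mathcal D(\Theta)$ is strictly increasing through $t_0$ (in the Dini sense). At $t_0$, the displayed bound combined with the standing hypothesis $\kappa m_\Phi \sin\theta_* > \mathcal D(\cV)$ gives $F(\dot\theta_M(t_0)) - F(\dot\theta_m(t_0)) < 0$. Since $F$ is strictly increasing on $(-L,L)$ by \eqref{A-0-0}, and the velocities $\dot\theta_M(t_0), \dot\theta_m(t_0)$ lie in $[a_G,b_G] \subset (-L,L)$ by the structural assumption \eqref{A-0}, the mean value theorem forces $\dot\theta_M(t_0) - \dot\theta_m(t_0) < 0$, hence $\frac{d^+}{dt}\mathcal D(\Theta)(t_0) < 0$, contradicting the choice of $t_0$. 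Therefore $\mathcal D(\Theta(t))\le\theta_*$ for all $t\ge 0$.

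The main obstacle is the nonlinear $F$: unlike the classical Kuramoto case where $F(\omega)=\omega$ lets one equate $F(\dot\theta_M)-F(\dot\theta_m)$ directly with the time derivative of the diameter, here one only controls an $F$-transformed velocity difference. The resolution is mild, namely exploiting only the \emph{sign} (not the magnitude) of $F(\dot\theta_M)-F(\dot\theta_m)$ to rule out boundary escape, so that all the quantitative work is moved onto the coupling sum via the $\sin A + \sin B \ge \sin(A+B)$ inequality and the uniform lower bound $m_\Phi$ on the network weights.
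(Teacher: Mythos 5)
Your proposal is correct and follows essentially the same route as the paper's proof: the same choice of extremal indices, the same replacement of $\phi_{M\beta},\phi_{m\beta}$ by $m_\Phi$ using the sign of each sine term, the same inequality $\sin A+\sin B\ge\sin(A+B)$ on $[0,\pi/2)$, and the same boundary-crossing contradiction at the first exit time. The only cosmetic difference is that the paper passes through $G=F^{-1}$ and the mean value theorem to quantify $\dot\theta_M-\dot\theta_m<0$, whereas you invoke only the strict monotonicity of $F$ to read off the sign; these are equivalent.
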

\begin{proof}
We choose time-dependent indices $M$ and $m$ satisfying
$$
\theta_M := \max_{1\le \alpha \le N}\theta_\alpha,\quad \theta_m :=\min_{1\le \alpha \le N}\theta_\alpha,
$$
and let $\nu_M$ and $\nu_m$ be time-dependent natural frequencies of $M$-th and $m$-th oscillators, respectively. For \eqref{B-1-1}, we use the contradiction argument. We set
$$
\mathcal{T}:= \{t>0\,:\,\mathcal{D}(\Theta(t)) \leq \theta_* \},\quad t_*:=\sup\mathcal{T}.
$$
By $\eqref{B-1-0}_2$, one can see that the set  $\cT$ is nonempty. Hence $t_* \in (0, \infty]$. Next, we claim:
\begin{equation} \label{B-1-2}
t_* = \infty.
\end{equation}
{\it Proof of \eqref{B-1-2}}: suppose the contrary holds. Then, it follows from the definition of $t_*$ that
\begin{align} \label{B-1-3}
\mathcal{D}(\Theta(t_*))=\theta_*,\quad \frac{d^+}{dt}\bigg|_{t=t_*}\mathcal{D}(\Theta)\geq 0,
\end{align}
where we used the Dini derivative $\frac{d^+}{dt}$. \newline

Note that 
 \begin{align}
\begin{aligned} \label{B-1-4}
& \nu_M+\frac{\kappa}{N} \sum_{\beta=1}^N \phi_{\beta M} \sin\big( \theta_\beta(t_*) - \theta_M(t_*) \big)-\bigg(\nu_m+\frac{\kappa}{N} \sum_{\beta=1}^N \phi_{\beta m}\sin\big( \theta_\beta(t_*) - \theta_m(t_*) \big)  \bigg)\\
&\hspace{.2cm} \le \cD(\cV) - \frac{\kappa m_\Phi}{N} \sum_{\beta=1}^N \big(\sin\big( \theta_M(t_*) - \theta_\beta(t_*) \big) + \sin\big( \theta_\beta(t_*) - \theta_m(t_*) \big)  \big)\\
&\hspace{.2cm} < \cD(\cV) -\kappa m_\Phi \sin\big(\theta_M(t_*) - \theta_m(t_*) \big) =  \cD(\cV) -\kappa m_\Phi \sin\theta_* < 0,
\end{aligned}
\end{align}
where we used $\eqref{B-1-0}_1$ and the relation:
\[ 0<\sin(a+b)<\sin a+\sin b \quad \mbox{for any $a, b \in (0, \pi/2)$}. \]
Then, we use the mean-value theorem and \eqref{B-1-4} to find 
\begin{align*}
\begin{aligned}
\frac{d^+}{dt}\bigg|_{t=t_*}\mathcal{D}(\Theta) &= 
G\bigg(\nu_M+\frac{\kappa}{N} \sum_{\beta=1}^N \phi_{\beta M} \sin\big( \theta_\beta - \theta_M \big)\bigg)-G\bigg(\nu_m+\frac{\kappa}{N} \sum_{\beta=1}^N \phi_{\beta m}\sin\big( \theta_\beta - \theta_m\big)  \bigg)
\\
&< m_{G'} \big( \cD(\cV) -\kappa m_\Phi \sin\theta_* \big) < 0.
\end{aligned}
\end{align*}
This contradicts \eqref{B-1-3}. Thus we have $t_* = \infty$ and we have the desired result.
\end{proof}

\vspace{0.5cm}

We differentiate \eqref{sys:GKM}$_1$ to obtain a second-order formation of the GK model:
\begin{align}
\begin{cases} \label{sys:GKM2}
\displaystyle \dot\theta_\alpha = \omega_\alpha, \quad  \forall~t>0, \vspace{.1cm}\\
\displaystyle \dot\omega_\alpha = \frac{\kappa}{N} \sum_{\beta=1}^N \phi_{\alpha\beta}(\omega_\beta - \omega_\alpha)\frac{\cos(\theta_\beta - \theta_\alpha)}{F'(\omega_\alpha)}, \vspace{.1cm}\\
\displaystyle \theta_\alpha(0)=\theta_\alpha^0,\quad \omega_\alpha(0) :=  G \Big(  \nu_\alpha + \frac{\kappa}{N} \sum_{\beta = 1}^{N} \sin(\theta^0_\beta - \theta^0_\alpha)  \Big), \quad \forall~\alpha \in [N].
\end{cases}
\end{align}
In the following lemma, we study the relation between the first-order model \eqref{sys:GKM} and the second-order model \eqref{sys:GKM2}. 
\begin{lemma} \label{L2.2}
Two Cauchy problems \eqref{sys:GKM} and \eqref{sys:GKM2} are equivalent. 
\end{lemma}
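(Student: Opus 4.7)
The plan is to establish equivalence in both directions, relying on the fact that by \eqref{A-0-0} the map $F:(-L,L)\to F((-L,L))$ is a $\mathcal C^2$-diffeomorphism with inverse $G$, and on the \emph{a priori} confinement of $\omega_\alpha$ guaranteed by \eqref{A-0}.

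For the forward implication \eqref{sys:GKM}$\Rightarrow$\eqref{sys:GKM2}, I would take a smooth solution $\Theta$ of \eqref{sys:GKM}, set $\omega_\alpha := \dot\theta_\alpha$, and differentiate the algebraic identity
\[
F(\omega_\alpha) = \nu_\alpha + \frac{\kappa}{N}\sum_{\beta=1}^N \phi_{\alpha\beta}\sin(\theta_\beta-\theta_\alpha)
\]
in $t$. The chain rule produces $F'(\omega_\alpha)\dot\omega_\alpha$ on the left and $\frac{\kappa}{N}\sum_\beta \phi_{\alpha\beta}\cos(\theta_\beta-\theta_\alpha)(\omega_\beta-\omega_\alpha)$ on the right; dividing by $F'(\omega_\alpha)>0$ yields exactly \eqref{sys:GKM2}$_2$. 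The initial velocity is obtained by applying $G=F^{-1}$ to \eqref{sys:GKM}$_1$ evaluated at $t=0$.

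For the reverse implication \eqref{sys:GKM2}$\Rightarrow$\eqref{sys:GKM}, I would introduce the defect
\[
H_\alpha(t) := F(\omega_\alpha(t)) - \nu_\alpha - \frac{\kappa}{N}\sum_{\beta=1}^N \phi_{\alpha\beta}\sin(\theta_\beta(t)-\theta_\alpha(t)),
\]
and show $H_\alpha(0)=0$ and $\dot H_\alpha\equiv 0$. The vanishing at $t=0$ follows from the prescribed initial condition for $\omega_\alpha(0)$ together with $F\circ G=\mathrm{id}$. For the time derivative, substituting $\dot\theta_\alpha=\omega_\alpha$ and the second equation of \eqref{sys:GKM2} produces a perfect cancellation of the two $\kappa$-terms: the factor $F'(\omega_\alpha)$ coming from $\frac{d}{dt}F(\omega_\alpha)$ cancels the $F'(\omega_\alpha)$ in the denominator of $\dot\omega_\alpha$, leaving exactly the $\cos(\theta_\beta-\theta_\alpha)(\omega_\beta-\omega_\alpha)$ sum that one gets from differentiating the $\sin$-coupling. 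Hence $H_\alpha\equiv 0$, which combined with $\dot\theta_\alpha=\omega_\alpha$ recovers \eqref{sys:GKM}.

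The only delicate point, which I regard as the main (though mild) obstacle, is ensuring that $\omega_\alpha(t)$ stays in $(-L,L)$ so that $F'(\omega_\alpha)$ and $F(\omega_\alpha)$ are well-defined and the division by $F'(\omega_\alpha)$ is legitimate. This is built into the hypotheses: the identity $F(\omega_\alpha)=\nu_\alpha+\frac{\kappa}{N}\sum_\beta\phi_{\alpha\beta}\sin(\theta_\beta-\theta_\alpha)$ forces $\omega_\alpha$ to lie in the image under $G$ of the interval $[\nu_\alpha-\frac{\kappa}{N}\sum_\beta|\phi_{\alpha\beta}|,\,\nu_\alpha+\frac{\kappa}{N}\sum_\beta|\phi_{\alpha\beta}|]$, which by \eqref{A-0} is contained in $[a_G,b_G]\subset(-L,L)$, where $F'\geq m_{F'}>0$. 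With this confinement in hand, the two implications are purely computational and rely on no further ingredients beyond the chain rule.
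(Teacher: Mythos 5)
Your proposal is correct and follows essentially the same route as the paper: the forward direction is the same chain-rule differentiation, and your reverse direction (showing the defect $H_\alpha$ vanishes identically) is just the derivative form of the paper's argument, which integrates \eqref{sys:GKM2}$_2$ in time and reads off $\nu_\alpha = F(\omega_\alpha(0)) - \frac{\kappa}{N}\sum_\beta \phi_{\alpha\beta}\sin(\theta_\beta^0-\theta_\alpha^0)$ from the constant of integration. The only cosmetic difference is that the paper allows an arbitrary initial velocity $\Omega^0$ and \emph{constructs} the natural frequencies from it, whereas you fix $\mathcal V$ and use the compatible initial velocity prescribed in \eqref{sys:GKM2}$_3$; your explicit remark on the confinement of $\omega_\alpha$ in $(-L,L)$ is a sound addition that the paper leaves implicit.
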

\begin{proof}
\noindent (i)~First, let $\Theta = \Theta(t)$ be a global smooth solution to \eqref{sys:GKM} with the initial data set $(\Theta^0, {\mathcal V})$. Then, it is clear to check that $(\Theta, \Omega = {\dot \Theta})$ is a solution to \eqref{sys:GKM2}. \newline

\noindent (ii)~Let $(\Theta, \Omega) = (\Theta(t), \Omega(t))$ be a global smooth solution to \eqref{sys:GKM2} with the initial data $(\Theta^0, \Omega^0)$. We can rewrite the \eqref{sys:GKM2}$_2$ by
\begin{align*}
F'(\omega_\alpha)\dot\omega_\alpha = \frac{\kappa}{N} \sum_{\beta=1}^N \phi_{\alpha\beta}(\omega_\beta - \omega_\alpha) \cos(\theta_\beta - \theta_\alpha).
\end{align*}
By integrating both sides with respect to $t$, one obtains
\begin{align*}
F(\omega_\alpha(t)) = \bigg[ F(\omega_\alpha^0) -\frac{\kappa}{N} \sum_{\beta=1}^N \phi_{\alpha\beta} \sin(\theta_\beta^0 - \theta_\alpha^0) \bigg] +\frac{\kappa}{N} \sum_{\beta=1}^N \phi_{\alpha\beta} \sin(\theta_\beta(t) - \theta_\alpha(t)).
\end{align*}
This implies that $\Theta$ is the solution of \eqref{sys:GKM} with the initial data and natural frequency pair $(\Theta^0, \mathcal V)$:
\[  \nu_\alpha := F(\omega_\alpha(0)) -  \frac{\kappa}{N} \sum_{\beta = 1}^{N} \sin(\theta^0_\beta - \theta^0_\alpha).  \]
\end{proof}
\begin{lemma} \label{L2.3}
Suppose that there exists $\theta_*\in(0, \pi/2)$ such that initial data and system parameters satisfy
\[
\kappa  > \frac{\mathcal D(\cV)}{ m_\Phi \sin\theta_*}, \quad \mathcal{D}(\Theta^0)\le\theta_*,
\]
and let $\Theta = \Theta(t)$ be a global smooth solution to \eqref{sys:GKM}. Then, there exists a positive constant $\Lambda_1 = \Lambda_1(\kappa, \Phi, F, \theta_*)$ such that
\begin{align*}
\mathcal D(\Omega(t)) \le \mathcal D(\Omega^0) e^{-\Lambda_1 t},\quad \forall ~t > 0.
\end{align*}
\end{lemma}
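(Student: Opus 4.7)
The plan is to argue directly on the Dini derivative of $\mathcal{D}(\Omega)$ using the second-order formulation \eqref{sys:GKM2} furnished by Lemma \ref{L2.2}, together with the trapping estimate $\mathcal{D}(\Theta(t)) \le \theta_*$ from Lemma \ref{L2.1}, which guarantees $\cos(\theta_\beta - \theta_\alpha) \ge \cos\theta_* > 0$ for all $\alpha,\beta$ and all $t \ge 0$.

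\textbf{Setup.} For each $t$, pick indices $M = M(t)$ and $m = m(t)$ (chosen to make the Dini derivatives work; standard for Kuramoto-type analysis) such that $\omega_M(t) = \max_\alpha \omega_\alpha(t)$ and $\omega_m(t) = \min_\alpha \omega_\alpha(t)$. Then I would write
\begin{align*}
\frac{d^+}{dt}\mathcal{D}(\Omega) \;=\; \dot\omega_M - \dot\omega_m \;=\; \frac{\kappa}{NF'(\omega_M)}\sum_{\beta=1}^N \phi_{M\beta}(\omega_\beta-\omega_M)\cos(\theta_\beta-\theta_M) \;-\; \frac{\kappa}{NF'(\omega_m)}\sum_{\beta=1}^N \phi_{m\beta}(\omega_\beta-\omega_m)\cos(\theta_\beta-\theta_m).
\end{align*}

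\textbf{Key inequality.} In the first sum every summand is nonpositive since $\omega_\beta - \omega_M \le 0$, $\phi_{M\beta} \ge m_\Phi > 0$, and $\cos(\theta_\beta-\theta_M) \ge \cos\theta_* > 0$. In the second sum every summand is nonnegative by the same logic. Using $\phi_{M\beta}(\omega_\beta-\omega_M) \le m_\Phi(\omega_\beta-\omega_M)$ (since the factor $(\omega_\beta-\omega_M)$ is $\le 0$), then $\cos(\theta_\beta-\theta_M) \ge \cos\theta_*$, and finally $F'(\omega_M) \le M_{F'}$, each such pointwise bound pushes in the direction of a larger (less negative) upper bound for $\dot\omega_M$, so
\[
\dot\omega_M \;\le\; \frac{\kappa m_\Phi \cos\theta_*}{M_{F'}}\bigl(\bar\omega - \omega_M\bigr), \qquad \bar\omega := \frac{1}{N}\sum_{\beta=1}^N \omega_\beta.
\]
The symmetric argument, with signs reversed, yields
\[
\dot\omega_m \;\ge\; \frac{\kappa m_\Phi \cos\theta_*}{M_{F'}}\bigl(\bar\omega - \omega_m\bigr).
\]
Subtracting, the $\bar\omega$ contributions cancel and I obtain
\[
\frac{d^+}{dt}\mathcal{D}(\Omega) \;\le\; -\frac{\kappa m_\Phi \cos\theta_*}{M_{F'}}\bigl(\omega_M - \omega_m\bigr) \;=\; -\Lambda_1\,\mathcal{D}(\Omega), \qquad \Lambda_1 := \frac{\kappa m_\Phi \cos\theta_*}{M_{F'}}.
\]

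\textbf{Conclusion and main obstacle.} A standard Grönwall argument for the Dini derivative then gives the claimed bound $\mathcal{D}(\Omega(t)) \le \mathcal{D}(\Omega^0)\,e^{-\Lambda_1 t}$. The plan is essentially routine once one commits to the second-order rewriting, and the main bookkeeping difficulty will be tracking the direction of all four inequalities (sign of $\omega_\beta-\omega_\alpha$, lower bound on $\cos$, lower bound on $\phi$, upper bound on $F'$) so that each relaxation is actually an upper bound for $\dot\omega_M$ and a lower bound for $\dot\omega_m$; the nonlinearity of $F$ is precisely what requires using the uniform bound $M_{F'}$ obtained from the trapping set $[a_G,b_G]$, while positivity of $m_\Phi$ replaces the all-to-all assumption of \cite{MAHK}. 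If one instead prefers to avoid introducing $\bar\omega$, the same rate (up to a factor of $2$) is obtained by retaining only the $\beta=m$ term in the first sum and the $\beta=M$ term in the second sum, which is the more pedestrian variant I would fall back on if the averaging bound proves awkward to verify in detail.
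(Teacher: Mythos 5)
Your proposal is correct and follows essentially the same route as the paper: bound $\dot\omega_M$ and $\dot\omega_m$ via the second-order form using $\phi_{\alpha\beta}\ge m_\Phi$, $\cos(\theta_\beta-\theta_\alpha)\ge\cos\theta_*$ (from Lemma \ref{L2.1}) and $F'\le M_{F'}$, sum over $\beta$ so the average cancels, and apply Gr\"onwall to get exactly the rate $\Lambda_1=\kappa m_\Phi\cos\theta_*/M_{F'}$ that the paper obtains.
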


\begin{proof}
As in the proof of Lemma \ref{L2.1}, we choose time-dependent indices $M$ and $m$ satisfying
\[
\omega_M := \max_{1\le \alpha \le N}\omega_\alpha,\quad \omega_m :=\min_{1\le \alpha\le N}\omega_\alpha,
\]
Then for a differentiable point $t \in (0, \infty)$, we use Lemma \ref{L2.1} and \ref{L2.3} to observe the behavior of $\cD(\Omega)$:
\begin{align*}
\begin{aligned}
\frac{d}{dt} \big( \omega_M - \omega_m \big) &=
\frac{\kappa}{N} \sum_{\beta=1}^N \phi_{M\beta}(\omega_\beta - \omega_M)\frac{\cos(\theta_\beta - \theta_M)}{F'(\omega_M)} -
\frac{\kappa}{N} \sum_{\beta=1}^N \phi_{m\beta}(\omega_\beta - \omega_m)\frac{\cos(\theta_\beta - \theta_m)}{F'(\omega_m)}\\
&\le \frac{\kappa m_\Phi}{N} \sum_{\beta=1}^N(\omega_\beta - \omega_M)\frac{\cos\mathcal D(\Theta)}{M_{F'}} +
\frac{\kappa m_\Phi}{N} \sum_{\beta=1}^N (\omega_m - \omega_\beta)\frac{\cos\mathcal D(\Theta)}{M_{F'}}\\
&= -\frac{\kappa m_{\Phi} \cos\mathcal D(\Theta)}{M_{F'}} \big( \omega_M - \omega_m \big).
\end{aligned}
\end{align*}
This implies our desired result with
\begin{align*}
\Lambda_1 = \frac{\kappa m_{\Phi} \cos\theta_*}{M_{F'}} >0.
\end{align*}
\end{proof}

%
%
%
%
\section{Uniform $\ell_p$-stability estimate} \label{sec:3}
\setcounter{equation}{0}
In this section, we provide a sufficient framework for uniform $\ell_p$-stability of the GK model \eqref{sys:GKM}.  The same argument can be applied to the lattice model \eqref{sys:LGKM}.

\subsection{A sufficient framework and uniform stability} \label{sec:3.1} 
First, we begin with Newton's quotient-like functional ${\mathcal Q}[\cdot]$:
\begin{align} \label{C-1}
& {\mathcal Q}^{-1}[F](x,y) :=
\begin{dcases}
\displaystyle\frac{x-y}{F(x) -F(y)} & x\neq y, \\
\displaystyle\frac{1}{F'(x)} & x=y,
\end{dcases} \quad 
{\mathcal Q}[G^{\prime}](x,y) :=
\begin{dcases}
\displaystyle \dfrac{G'(x) - G'(y)}{x-y} & x\neq y, \\
\displaystyle G''(x) & x=y.
\end{dcases}
\end{align} 
Since $F'>0$, these are well defined and note that $  {\mathcal Q}^{-1}[F]$ and ${\mathcal Q}[G^{\prime}]$ are symmetric functions in their arguments. Moreover, for later usage, we define
\begin{align} \label{C-2}
\begin{aligned}
& m_{ {\mathcal Q}^{-1}[F]} :=\min_{(x,y)\in[a_G,b_G]^2}  {\mathcal Q}^{-1}[F](x,y),\quad M_{ {\mathcal Q}^{-1}[F]} :=\max_{(x,y)\in[a_G,b_G]^2}  {\mathcal Q}^{-1}[F](x,y),\\
& m_{{\mathcal Q}[G^{\prime}]} := \min_{(x,y)\in[F(a_G),F(b_G)]^2} \big| {\mathcal Q}[G^{\prime}](x,y) \big|,~~M_{{\mathcal Q}[G^{\prime}]} := \max_{(x,y)\in[F(a_G),F(b_G)]^2} \big| {\mathcal Q}[G^{\prime}](x,y) \big|.
\end{aligned}
\end{align}
Next, we state a sufficient framework $({\mathcal F}_A)$ leading to the uniform $\ell_p$-stability.  Let $(\Theta^0, \cV, \Phi)$ and $(\tilde\Theta^0, \tilde\cV, \tilde\Phi)$ be triples consisting of initial data, natural frequency, and network topologies, respectively. We assume that there exist $b_\Phi \geq a_\Phi>0$, $-L < a_G < b_G < L$, and $\theta_*\in(0, \pi/2)$ such that 
\begin{equation} \label{C-3}
({\mathcal F}_A):~
\begin{cases}
& \displaystyle \max\{ \mathcal{D}(\Theta^0), \mathcal{D}(\tilde\Theta^0) \} \le\theta_*, \quad  a_\Phi \leq \phi_{\alpha\beta}, \tilde\phi_{\alpha\beta} \leq b_\Phi,~~\forall~\alpha,\beta \in [N], \\
& \displaystyle  \kappa  > \max \Big \{ \frac{\mathcal D(\cV)}{ m_\Phi \sin\theta_*},~~ \frac{\mathcal D(\tilde\cV)}{  m_{\tilde\Phi} \sin\theta_*},~~ \frac{M_{{\mathcal Q}^{-1}[F]}}{  \max\{m_\Phi, m_{\tilde\Phi}\} m_{G'}^2\cos\theta_*} \Big \}, \\
&\displaystyle  \quad F(a_G) +\kappa b_\Phi \leq \nu_\alpha, \tilde\nu_\alpha \leq F(b_G) -\kappa b_\Phi, \quad \sum_{\alpha=1}^N \nu_\alpha = \sum_{\alpha=1}^N \tilde\nu_\alpha.
\end{cases}
\end{equation}
Now we are ready to state the first main result on the uniform $\ell_p$-stability. 
\begin{theorem} \label{T3.1}
Suppose that the framework $({\mathcal F}_A)$ holds, and let Let $\Theta = \Theta(t)$ and $\tilde\Theta = \tilde\Theta(t)$ be two global solutions of \eqref{sys:GKM} subject to the initial data, natural frequency, and communication matrix triples $(\Theta^0, \cV, \Phi)$ and $(\tilde\Theta^0, \tilde\cV, \tilde\Phi)$, respectively. Then there exists a positive constant $\Lambda_2 = \Lambda_2(\kappa, F, a_\Phi, b_\Phi, a_G, b_G, \theta_*)$ such that for $p\in[1,\infty]$,
\begin{align*}
& \| \Theta(t) - \tilde\Theta(t) \|_p + \| \Omega(t) - \tilde\Omega(t) \|_p \leq \Lambda_2 \bigg( \| \Theta^0 - \tilde\Theta^0 \|_p + \| \Omega^0 - \tilde\Omega^0 \|_p +\frac{\|\Phi - \tilde\Phi\|_p}{N^{\frac1p}} \bigg), \quad \forall ~t\geq0.
\end{align*}
\end{theorem}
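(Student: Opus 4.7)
The plan is to prove the stability estimate by working with the second-order formulation of Lemma \ref{L2.2} and tracking both the phase differences $h_\alpha := \theta_\alpha - \tilde\theta_\alpha$ and the frequency differences $g_\alpha := \omega_\alpha - \tilde\omega_\alpha$ simultaneously. By Lemma \ref{L2.1}, both solutions remain in a quadrant neighborhood with $\mathcal{D}(\Theta(t)), \mathcal{D}(\tilde\Theta(t)) \leq \theta_*$ for all $t \geq 0$, and by Lemma \ref{L2.3}, both $\mathcal{D}(\Omega(t))$ and $\mathcal{D}(\tilde\Omega(t))$ decay exponentially at a common rate $\Lambda_1 > 0$; these two structural facts will be used repeatedly.

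The main technical step is the derivation of a coupled pair of differential inequalities for $\|h(t)\|_p$ and $\|g(t)\|_p$. The first one, $\frac{d}{dt}\|h\|_p \leq \|g\|_p$, is immediate from $\dot h_\alpha = g_\alpha$. For the second, I would subtract the two equations \eqref{sys:GKM2}$_2$ and split the resulting expression for $\dot g_\alpha$ into four contributions: (a) a dissipative graph-Laplacian-type alignment term $\frac{\kappa}{N}\sum_\beta \phi_{\alpha\beta}\cos(\theta_\beta-\theta_\alpha)(g_\beta - g_\alpha)/F'(\omega_\alpha)$, which drives $g$ toward consensus; (b) cross terms proportional to $(\tilde\omega_\beta - \tilde\omega_\alpha)[\cos(\theta_\beta - \theta_\alpha) - \cos(\tilde\theta_\beta - \tilde\theta_\alpha)]$, controlled via the mean value theorem by $\mathcal{D}(\tilde\Omega(t))\|h\|_p \lesssim e^{-\Lambda_1 t}\|h\|_p$; (c) a network-perturbation term of order $\|\Phi-\tilde\Phi\|_p/N^{1/p}$ coming from $\phi_{\alpha\beta} - \tilde\phi_{\alpha\beta}$; and (d) a self-interaction term proportional to $g_\alpha$ arising from $1/F'(\omega_\alpha) - 1/F'(\tilde\omega_\alpha)$, which I would express through $\mathcal{Q}[G']$ and $\mathcal{Q}^{-1}[F]$. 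Applying the standard $\ell_p$ device (for $p \in [1,\infty)$ multiply by $|g_\alpha|^{p-2}g_\alpha$ and sum; for $p = \infty$ take the Dini derivative at maximizing/minimizing indices as in the proof of Lemma \ref{L2.1}), together with the identity $\sum_\alpha F(\omega_\alpha) = \sum_\alpha \nu_\alpha = \sum_\alpha \tilde\nu_\alpha = \sum_\alpha F(\tilde\omega_\alpha)$ from $({\mathcal F}_A)$ used to control the mean drift of $g$, one should arrive at
$$\frac{d}{dt}\|g(t)\|_p \leq -\Lambda \|g(t)\|_p + C_1 e^{-\Lambda_1 t}\|h(t)\|_p + C_2 \frac{\|\Phi - \tilde\Phi\|_p}{N^{1/p}},$$
where $\Lambda > 0$ strictly; this positivity is precisely what the third lower bound on $\kappa$ in $({\mathcal F}_A)$ (involving $M_{\mathcal{Q}^{-1}[F]}$) is designed to guarantee, since it forces the dissipation from (a) to dominate the destabilizing self-interaction from (d).

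The coupled system is then closed by a two-step Gronwall argument. Since $\dot h = g$ yields at most linear-in-time growth of $\|h\|_p$ in terms of $\int \|g\|_p$, the forcing $e^{-\Lambda_1 t}\|h\|_p$ on the $g$ equation remains integrable on $[0,\infty)$. Bootstrapping then gives both $\int_0^\infty \|g(s)\|_p\, ds < \infty$ and, from the $g$ equation itself, uniform-in-time boundedness of $\|g(t)\|_p$ up to the source term $\|\Phi - \tilde\Phi\|_p/N^{1/p}$. Combining with $\|h(t)\|_p \leq \|h(0)\|_p + \int_0^t \|g(s)\|_p\, ds$ produces the stated inequality with an explicit constant $\Lambda_2 = \Lambda_2(\kappa, F, a_\Phi, b_\Phi, a_G, b_G, \theta_*)$.

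The chief obstacle is the absence of Galilean invariance: for the classical Kuramoto model with $F(\omega) = \omega$ one works modulo the common drift $\nu_c$, which automatically enforces $\sum g_\alpha = 0$ and reduces stability to a diameter contraction. Here, $\sum_\alpha F(\omega_\alpha) = \sum_\alpha \nu_\alpha$ is only a weighted-mean identity, and converting it back into genuine $\|g\|_p$ control (not merely diameter control) forces the introduction of the nonlinear quotient $\mathcal{Q}^{-1}[F]$. This interplay, combined with the unavoidable destabilizing self-interaction from (d), is what dictates the quantitative threshold on $\kappa$ in $({\mathcal F}_A)$ and constitutes the delicate bookkeeping at the heart of the argument.
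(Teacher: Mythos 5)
Your decomposition of $\dot g_\alpha$ into the four contributions (a)--(d) is exactly the paper's splitting \eqref{C-4} of $\frac{d}{dt}\|\Omega-\tilde\Omega\|_p^p$ into $\mathcal I_{11},\dots,\mathcal I_{14}$, and your use of $\sum_\alpha F(\omega_\alpha)=\sum_\alpha\nu_\alpha=\sum_\alpha\tilde\nu_\alpha=\sum_\alpha F(\tilde\omega_\alpha)$ together with $\mathcal Q^{-1}[F]$ to convert the symmetrized graph-Laplacian term into genuine $\|g\|_p$-dissipation is also the paper's route (Lemma \ref{L3.2} and Appendix \ref{App-A}). The gap is in the closure. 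First, your source term $C_2\|\Phi-\tilde\Phi\|_p/N^{1/p}$ must carry the factor $e^{-\Lambda_1 t}$: the network-perturbation term $\mathcal I_{14}$ is multiplied by $(\tilde\omega_\beta-\tilde\omega_\alpha)$, which decays like $\mathcal D(\tilde\Omega^0)e^{-\Lambda_1 t}$ by Lemma \ref{L2.3}, and this decay is essential. With a non-decaying source, $\|g(t)\|_p$ settles near $C_2\|\Phi-\tilde\Phi\|_p/(\Lambda N^{1/p})>0$, so $\int_0^\infty\|g(s)\|_p\,ds=\infty$ and $\|h(t)\|_p\le\|h(0)\|_p+\int_0^t\|g(s)\|_p\,ds$ grows linearly in $t$; both of your claimed bootstrap conclusions then fail and no uniform-in-time bound follows.

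Second, even with the corrected source, the ``two-step Gr\"onwall'' does not close as stated. Integrating your $g$-inequality over $[0,T]$ and inserting $\|h(s)\|_p\le\|h(0)\|_p+\int_0^s\|g\|_p$ gives, after exchanging the order of integration, $(\Lambda-C_1/\Lambda_1)\int_0^T\|g\|_p\le \|g(0)\|_p+C_1\Lambda_1^{-1}\|h(0)\|_p+(\text{source})$, which closes only if $\Lambda>C_1/\Lambda_1$; since $C_1\sim\kappa^2$ (it comes from $\mathcal D(\Omega^0)\lesssim \kappa b_\Phi M_{G'}$) while $\Lambda,\Lambda_1\sim\kappa$, this comparison is a condition on fixed constants that $(\mathcal F_A)$ does not guarantee. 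The paper avoids this entirely: it adds $\frac{d}{dt}\|h\|_p\le\|g\|_p$ to the $g$-inequality and uses the third lower bound on $\kappa$ in \eqref{C-3} precisely to force the dissipation rate $\kappa G'(\nu_c)^2 m_\Phi\cos\theta_*/M_{\mathcal Q^{-1}[F]}\ge 1$, so that the $+\|g\|_p$ coupling is absorbed and a single Gr\"onwall on the sum $\|h\|_p+\|g\|_p$ with integrable coefficient $e^{-\Lambda_1 t}$ finishes. This also corrects your reading of that hypothesis: the self-interaction term (d) is harmless because it carries the factor $\omega_\beta-\omega_\alpha$, which is $O(e^{-\Lambda_1 t})$ by Lemma \ref{L2.3}, so it needs no domination by $\kappa$; the largeness of $\kappa$ is there to absorb the phase--frequency coupling, not to beat (d).
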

\begin{proof} Since the proof is very lengthy, we leave its proof in next two subsections.
\end{proof}

\subsection{Preparatory lemmas} \label{sec:3.2}
In this subsection, we present two preparatory lemmas for the proof of Theorem \ref{T3.1}.  
\begin{lemma} \label{L3.1}
Suppose initial data and system parameters satisfy  the following relations: there exists $\theta_*\in(0, \pi/2)$ such that
\begin{align*}
\kappa  > \frac{\mathcal D(\cV)}{ m_\Phi \sin\theta_*}, \quad \mathcal{D}(\Theta^0)\le\theta_*,
\end{align*}
and let $\Theta = \Theta(t)$ be a global smooth solution to \eqref{sys:GKM}. Then, we have
\begin{align*}
\big| F(\omega_\alpha(t)) - \nu_c \big| \le \frac{\mathcal{D}(\Omega^0)}{m_{ {\mathcal Q}^{-1}[F]}} e^{-\Lambda_1 t}, \quad \big| \omega_\alpha(t)-G(\nu_c) \big| \le \frac{M_{ {\mathcal Q}^{-1}[F]}}{m_{ {\mathcal Q}^{-1}[F]}} \mathcal{D}(\Omega^0) e^{-\Lambda_1 t},
\end{align*}
for all $t>0,~\alpha \in [N]$ where $\Lambda_1$ is a positive constant obtained in Lemma \ref{L2.3}.
\end{lemma}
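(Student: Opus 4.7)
The plan is to exploit the conservation property of the averaged frequency, convert a bound on $\mathcal{D}(F(\Omega))$ into the first estimate, and then transfer to $\omega_\alpha$ through the Newton-quotient representation of $G = F^{-1}$.

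First, I would sum equation \eqref{sys:GKM}$_1$ over $\alpha \in [N]$. Using the symmetry $\phi_{\alpha\beta} = \phi_{\beta\alpha}$ together with the odd parity of $\sin$, the double sum $\sum_{\alpha,\beta}\phi_{\alpha\beta}\sin(\theta_\beta-\theta_\alpha)$ vanishes, so
\[
\frac{1}{N}\sum_{\alpha=1}^{N} F(\omega_\alpha(t)) \;=\; \nu_c, \qquad \forall\, t>0.
\]
Hence $\nu_c$ lies between $\min_\beta F(\omega_\beta(t))$ and $\max_\beta F(\omega_\beta(t))$, which gives the pointwise bound
\[
|F(\omega_\alpha(t)) - \nu_c| \;\le\; \mathcal{D}(F(\Omega(t))).
\]

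Next, I would estimate $\mathcal{D}(F(\Omega(t)))$ in terms of $\mathcal{D}(\Omega(t))$ by means of the Newton quotient $\mathcal{Q}^{-1}[F]$ introduced in \eqref{C-1}. Picking time-dependent extremal indices $M$ and $m$ with $\omega_M = \max_\alpha \omega_\alpha$ and $\omega_m = \min_\alpha \omega_\alpha$, the identity
\[
F(\omega_M) - F(\omega_m) = \frac{\omega_M - \omega_m}{\mathcal{Q}^{-1}[F](\omega_M,\omega_m)}
\]
together with the lower bound $\mathcal{Q}^{-1}[F](\omega_M,\omega_m) \ge m_{\mathcal{Q}^{-1}[F]}$ from \eqref{C-2} (which applies because Lemma \ref{L2.1} and the choice of $a_G, b_G$ keep $\omega_\alpha \in [a_G, b_G]$) yields
\[
\mathcal{D}(F(\Omega(t))) \;\le\; \frac{\mathcal{D}(\Omega(t))}{m_{\mathcal{Q}^{-1}[F]}}.
\]
Combining with the exponential contraction of $\mathcal{D}(\Omega)$ from Lemma \ref{L2.3}, I obtain the first claimed estimate.

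For the second inequality I would use the inverse relation $\omega_\alpha = G(F(\omega_\alpha))$ and apply $\mathcal{Q}^{-1}[F]$ in the reversed roles: since $G = F^{-1}$, substituting $x = \omega_\alpha$ and $y = G(\nu_c)$ gives
\[
\omega_\alpha - G(\nu_c) \;=\; \mathcal{Q}^{-1}[F]\bigl(\omega_\alpha, G(\nu_c)\bigr)\,\bigl(F(\omega_\alpha) - \nu_c\bigr).
\]
Bounding the prefactor by $M_{\mathcal{Q}^{-1}[F]}$ and using the first estimate delivers the second bound. The main technical point to verify — and the only mild obstacle — is that both arguments of $\mathcal{Q}^{-1}[F]$ stay inside $[a_G, b_G]$ so that the $m_{\mathcal{Q}^{-1}[F]}, M_{\mathcal{Q}^{-1}[F]}$ constants from \eqref{C-2} are applicable; this follows from Lemma \ref{L2.1} and the definition of $a_G, b_G$ given in Section \ref{sec:2.1}, which ensures $G(\nu_c) \in [a_G, b_G]$ as well.
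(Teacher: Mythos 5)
Your proposal is correct and follows essentially the same route as the paper's proof: summing the equation to obtain the conserved mean $\frac1N\sum_\alpha F(\omega_\alpha)=\nu_c$, bounding $|F(\omega_\alpha)-\nu_c|$ by $F(\omega_M)-F(\omega_m)=\mathcal D(\Omega)/\mathcal Q^{-1}[F](\omega_M,\omega_m)$ together with the exponential decay from Lemma \ref{L2.3}, and then transferring to $|\omega_\alpha-G(\nu_c)|$ via the Newton-quotient identity. Your extra remark that both arguments of $\mathcal Q^{-1}[F]$ remain in $[a_G,b_G]$ is a detail the paper leaves implicit, and it is correctly justified.
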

\begin{proof}
We choose time-dependent indices $M$ and $m$ such that
\begin{align*}
\omega_M :=\max_{1\le \alpha \le N}\omega_\alpha, \quad \omega_m :=\min_{1\le \alpha \le N}\omega_\alpha.
\end{align*}
On the other hand, we add \eqref{sys:GKM} with respect to $\alpha$ to obtain
\begin{align} \label{C-3-2}
\frac{1}{N}\sum_{\alpha=1}^N F(\omega_\alpha) = \frac{1}{N} \sum_{\alpha=1}^N \nu_\alpha =: \nu_c
\end{align}
Hence, Lemma \ref{L2.3} yields
\begin{align*}
\big| F(\omega_\alpha) -\nu_c \big| \le F(\omega_M) - F(\omega_m)
= \frac{\mathcal D(\Omega)}{{\mathcal Q}^{-1}[F](\omega_M,\omega_m)}\le\frac{\mathcal{D}(\Omega^0)}{m_{{\mathcal Q}^{-1}[F]}} e^{-\Lambda_1 t}.
\end{align*}
This implies
\begin{align*}
\big| \omega_\alpha-G(\nu_c) \big| = \big| {\mathcal Q}^{-1}[F] \big( \omega_\alpha, G(\bar\nu) \big) \big| \cdot \big| F(\omega_\alpha) -\nu_c \big| \le  \frac{M_{ {\mathcal Q}^{-1}[F]}}{m_{ {\mathcal Q}^{-1}[F]}}  \mathcal{D}(\Omega^0) e^{-\Lambda_1 t}.
\end{align*}
\end{proof}

\vspace{0.2cm}

Now, we return to the uniform $\ell_p$-estimate. If the desired statement holds for all finite $p$, one can show the case of $p=\infty$ by letting $p$ go to infinity. This limiting argument is possible since $\Lambda_2$ does not depend on $p$. Moreover, since the case  $p=1$ can be proved in similar way, in this proof, we only consider $p\in(1,\infty)$. Without loss of generality, we assume
\[  m_\Phi \geq m_{\tilde\Phi}. \] We begin with differentiating $\|\Omega-\tilde\Omega\|_p^p$:
\begin{align}
\begin{aligned} \label{C-4}
&\frac{d}{dt} \sum_{\alpha=1}^N  \big|\omega_\alpha - \tilde\omega_\alpha \big|^p = p\sum_{\alpha=1}^N \big( \dot \omega_\alpha - \dot{\tilde\omega}_\alpha \big) \sgn\big( \omega_\alpha - \tilde\omega_\alpha \big) \big| \omega_\alpha - \tilde\omega_\alpha \big|^{p-1}\\
&= \frac{p\kappa}{N}\sum_{\alpha=1}^N \sgn\big( \omega_\alpha - \tilde\omega_\alpha \big) \big| \omega_\alpha - \tilde\omega_\alpha \big|^{p-1} \\
&\hspace{.2cm}\times \bigg[ \sum_{\beta=1}^N \bigg( \frac{\phi_{\alpha\beta} \cos(\theta_\beta - \theta_\alpha)}{F'(\omega_\alpha)} (\omega_\beta - \omega_\alpha) -\frac{\tilde\phi_{\alpha\beta} \cos(\tilde\theta_\beta - \tilde\theta_\alpha)}{F'(\tilde\omega_\alpha)} (\tilde\omega_\beta - \tilde\omega_\alpha)\bigg) \bigg] \\
&= \frac{p\kappa}{N}\sum_{\alpha,\beta=1}^N \phi_{\alpha\beta} \sgn\big( \omega_\alpha - \tilde\omega_\alpha \big) \big| \omega_\alpha - \tilde\omega_\alpha \big|^{p-1}  \\
&\hspace{1.8cm}\times\bigg(  \frac{\cos(\theta_\beta - \theta_\alpha)}{F'(\omega_\alpha)} (\omega_\beta - \omega_\alpha) -  \frac{\cos(\tilde\theta_\beta - \tilde\theta_\alpha)}{F'(\omega_\alpha)} (\omega_\beta - \omega_\alpha) \bigg) \\
&\hspace{.2cm}+\frac{p\kappa}{N}\sum_{\alpha,\beta=1}^N \phi_{\alpha\beta} \sgn\big( \omega_\alpha - \tilde\omega_\alpha \big) \big| \omega_\alpha - \tilde\omega_\alpha \big|^{p-1}  \\
&\hspace{1.8cm}\times\bigg( \frac{\cos(\tilde\theta_\beta - \tilde\theta_\alpha)}{F'(\omega_\alpha)} (\omega_\beta - \omega_\alpha) - \frac{\cos(\tilde\theta_\beta - \tilde\theta_\alpha)}{F'(\tilde\omega_\alpha)} (\omega_\beta - \omega_\alpha) \bigg) \\
&\hspace{.2cm}+\frac{p\kappa}{N}\sum_{\alpha,\beta=1}^N \phi_{\alpha\beta} \sgn\big( \omega_\alpha - \tilde\omega_\alpha \big) \big| \omega_\alpha - \tilde\omega_\alpha \big|^{p-1} \\
&\hspace{1.8cm}\times\bigg( \frac{\cos(\tilde\theta_\beta - \tilde\theta_\alpha)}{F'(\tilde\omega_\alpha)} (\omega_\beta - \omega_\alpha) - \frac{\cos(\tilde\theta_\beta - \tilde\theta_\alpha)}{F'(\tilde\omega_\alpha)} (\tilde\omega_\beta - \tilde\omega_\alpha) \bigg)\\
&\hspace{.2cm}+\frac{p\kappa}{N}\sum_{\alpha,\beta=1}^N \sgn\big( \omega_\alpha - \tilde\omega_\alpha \big) \big| \omega_\alpha - \tilde\omega_\alpha \big|^{p-1} \big( \phi_{\alpha\beta} -\tilde\phi_{\alpha\beta} \big) \frac{\cos(\tilde\theta_\beta - \tilde\theta_\alpha)}{F'(\tilde\omega_\alpha)} (\tilde\omega_\beta - \tilde\omega_\alpha) \\
& =: \mathcal{I}_{11} + \mathcal{I}_{12} + \mathcal{I}_{13} +\cI_{14}.
\end{aligned}
\end{align}
In the following lemma, we provide estimates for $\cI_{1i}$'s. 
\begin{lemma} \label{L3.2}
Suppose the same assumption in Theorem \ref{T3.1}. Then, the quantities ${\mathcal I}_{1i},~i=1, \cdots 4$, defined in \eqref{C-4} satisfy the following estimates.
\begin{align*}
\begin{aligned}
& (i) ~|\mathcal{I}_{11}| \leq \frac{6p\kappa^2 b_\Phi^2 M_{G'}}{m_{F'}} e^{-\Lambda_1 t}  \|\Omega -\tilde\Omega\|_p^{p-1} \| \Theta - \tilde\Theta \|_p, \\
&\hspace{.6cm} |\mathcal{I}_{12}| \leq  \frac{3p\kappa^2 b_\Phi^2 M_{G'} M_{{\mathcal Q}[G^{\prime}]}} {m_{{\mathcal Q}^{-1}[F]}} e^{-\Lambda_1 t}  \big\| \Omega - \tilde\Omega \big\|_p^{p},
\end{aligned}
\end{align*}
\begin{align*}
\begin{aligned}
& (ii)~ \mathcal{I}_{13} \leq  -\frac{p\kappa G'(\nu_c) C_{131}}{M_{{\mathcal Q}^{-1}[F]}}  \|\Omega - \tilde\Omega\|^p_p \\
&\hspace{1.4cm} +\bigg( \frac{12p\kappa^2 b_\Phi^2 M_{G'}^2 M_{{\mathcal Q}[G^{\prime}] }}{m^2_{{\mathcal Q}^{-1}[F]}} +\frac{6p\kappa^2 b_\Phi^2 M_{G'} M_{{\mathcal Q}[G^{\prime}]}}{m_{{\mathcal Q}^{-1}[F]}} \bigg) e^{-\Lambda_1t} \|\Omega - \tilde\Omega\|^{p}_p. \\
& (iii)~|\cI_{14}| \leq  \frac{3p\kappa^2 b_\Phi M_{G'}}{N^{\frac1p}m_{F'}}  e^{-\Lambda_1t} \|\Omega - \tilde\Omega\|^{p-1}_p \|\Phi - \tilde\Phi\|_p.
\end{aligned}
\end{align*}
where $C_{131}=G'(\nu_c)m_{\Phi}\cos \theta_*$.
\end{lemma}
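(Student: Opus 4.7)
The plan is to estimate each of $\mathcal{I}_{11},\ldots,\mathcal{I}_{14}$ separately, exploiting three common ingredients: the uniform phase-diameter bound $\mathcal{D}(\Theta(t)),\mathcal{D}(\tilde\Theta(t))\le \theta_*$ from Lemma \ref{L2.1} (so $\cos(\theta_\beta-\theta_\alpha),\cos(\tilde\theta_\beta-\tilde\theta_\alpha)\ge\cos\theta_*$); the uniform bounds $\phi_{\alpha\beta},\tilde\phi_{\alpha\beta}\in[a_\Phi,b_\Phi]$ and $\omega_\alpha,\tilde\omega_\alpha\in[a_G,b_G]$ from $\mathcal{F}_A$ (so $1/F'\in[m_{G'},M_{G'}]$); and the exponential frequency contraction $|\omega_\beta-\omega_\alpha|,|\tilde\omega_\beta-\tilde\omega_\alpha|\le\mathcal{D}(\Omega^0)e^{-\Lambda_1 t}$ from Lemma \ref{L2.3}, where the a priori bound $\mathcal{D}(\Omega^0)\le 3\kappa b_\Phi M_{G'}$ is obtained by applying $G$ to the initial identity $F(\omega_\alpha^0)=\nu_\alpha+(\kappa/N)\sum_\beta\phi_{\alpha\beta}\sin(\theta^0_\beta-\theta^0_\alpha)$. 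Throughout I write $X_\alpha:=\omega_\alpha-\tilde\omega_\alpha$ and $Y_\alpha:=\theta_\alpha-\tilde\theta_\alpha$.

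The three perturbative terms $\mathcal{I}_{11},\mathcal{I}_{12},\mathcal{I}_{14}$ are routine H\"older manipulations. For $\mathcal{I}_{11}$, I would apply $|\cos(\theta_\beta-\theta_\alpha)-\cos(\tilde\theta_\beta-\tilde\theta_\alpha)|\le|Y_\alpha|+|Y_\beta|$, extract the exponential decay from $|\omega_\beta-\omega_\alpha|$, and apply H\"older with conjugate exponents $p/(p-1)$ and $p$, the prefactor $1/N$ absorbing the $N$ from the $\beta$-sum. For $\mathcal{I}_{12}$, the key algebraic step is the mean-value rewriting
\[
\frac{1}{F'(\omega_\alpha)}-\frac{1}{F'(\tilde\omega_\alpha)}=G'(F(\omega_\alpha))-G'(F(\tilde\omega_\alpha))=\mathcal{Q}[G'](F(\omega_\alpha),F(\tilde\omega_\alpha))\cdot\frac{X_\alpha}{\mathcal{Q}^{-1}[F](\omega_\alpha,\tilde\omega_\alpha)},
\]
which extracts a clean factor $|X_\alpha|$ bounded by $M_{\mathcal{Q}[G']}/m_{\mathcal{Q}^{-1}[F]}$ and, combined with $|\omega_\beta-\omega_\alpha|\lesssim e^{-\Lambda_1 t}$, delivers the $\|\Omega-\tilde\Omega\|_p^p$ bound. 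The term $\mathcal{I}_{14}$ is simplest: after pulling out uniform constants and $|\tilde\omega_\beta-\tilde\omega_\alpha|\lesssim e^{-\Lambda_1 t}$, H\"older applied to the resulting double sum with the matrix $\ell_p$-norm $\|\Phi-\tilde\Phi\|_p$ naturally produces the $N^{-1/p}$ factor.

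The main obstacle is $\mathcal{I}_{13}$, which must supply a genuinely coercive term $-C\|\Omega-\tilde\Omega\|_p^p$. My plan is to decompose
\[
\mathcal{I}_{13}=\mathcal{I}_{13}^{\mathrm{lead}}+\mathcal{I}_{13}^{\mathrm{err}},\qquad \mathcal{I}_{13}^{\mathrm{lead}}:=\frac{p\kappa G'(\nu_c)}{N}\sum_{\alpha,\beta}\phi_{\alpha\beta}\cos(\tilde\theta_\beta-\tilde\theta_\alpha)(X_\beta-X_\alpha)\sgn(X_\alpha)|X_\alpha|^{p-1},
\]
where $\mathcal{I}_{13}^{\mathrm{err}}$ captures the defect of $1/F'(\tilde\omega_\alpha)$ from $G'(\nu_c)$; this defect is bounded by $M_{\mathcal{Q}[G']}|F(\tilde\omega_\alpha)-\nu_c|/m_{\mathcal{Q}^{-1}[F]}$, which by Lemma \ref{L3.1} applied to $\tilde\Theta$ decays as $e^{-\Lambda_1 t}$ and yields the second error constant in (ii). Since the weight $\phi_{\alpha\beta}\cos(\tilde\theta_\beta-\tilde\theta_\alpha)$ is symmetric in $(\alpha,\beta)$, swapping indices and averaging converts the leading term into
\[
\mathcal{I}_{13}^{\mathrm{lead}}=\frac{p\kappa G'(\nu_c)}{2N}\sum_{\alpha,\beta}\phi_{\alpha\beta}\cos(\tilde\theta_\beta-\tilde\theta_\alpha)(X_\beta-X_\alpha)\big(\sgn(X_\alpha)|X_\alpha|^{p-1}-\sgn(X_\beta)|X_\beta|^{p-1}\big),
\]
whose integrand is nonpositive by the monotonicity of $x\mapsto \sgn(x)|x|^{p-1}$. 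Invoking the lower bounds $\phi_{\alpha\beta}\ge m_\Phi$ and $\cos\ge\cos\theta_*$ (which push the estimate in the correct direction precisely because the integrand is nonpositive), I expand the algebraic identity
\[
\sum_{\alpha,\beta}(X_\beta-X_\alpha)\big(\sgn(X_\alpha)|X_\alpha|^{p-1}-\sgn(X_\beta)|X_\beta|^{p-1}\big)=-2N\|X\|_p^p+2\Big(\sum_\alpha X_\alpha\Big)\Big(\sum_\alpha\sgn(X_\alpha)|X_\alpha|^{p-1}\Big)
\]
to obtain the bulk dissipation $-C\|X\|_p^p$ plus a cross term involving $\sum_\alpha X_\alpha$. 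To show this cross term is an exponentially decaying error, I invoke the conservation law $\sum_\alpha F(\omega_\alpha)=\sum_\alpha\nu_\alpha=\sum_\alpha\tilde\nu_\alpha=\sum_\alpha F(\tilde\omega_\alpha)$ from \eqref{C-3-2} and $\mathcal{F}_A$, rewrite it as $\sum_\alpha X_\alpha/\mathcal{Q}^{-1}[F](\omega_\alpha,\tilde\omega_\alpha)=0$, and use the elementary identity $\sum_\alpha X_\alpha=\sum_\alpha X_\alpha\big(1-G'(\nu_c)/\mathcal{Q}^{-1}[F](\omega_\alpha,\tilde\omega_\alpha)\big)$; since Lemma \ref{L3.1} forces $\omega_\alpha,\tilde\omega_\alpha\to G(\nu_c)$ exponentially, smoothness of $\mathcal{Q}^{-1}[F]$ makes the corrector $O(e^{-\Lambda_1 t})$, so H\"older gives $|\sum_\alpha X_\alpha|\lesssim e^{-\Lambda_1 t}N^{1-1/p}\|X\|_p$ and the cross term is absorbed into the first error constant of (ii), as stated.
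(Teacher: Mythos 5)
Your treatment of $\mathcal{I}_{11}$, $\mathcal{I}_{12}$, $\mathcal{I}_{14}$, and of the defect term $\mathcal{I}_{13}^{\mathrm{err}}$ (the paper's $\mathcal{I}_{132}$) coincides with the paper's proof, including the mean-value rewriting of $1/F'(\omega_\alpha)-1/F'(\tilde\omega_\alpha)$ via $\mathcal{Q}[G']$ and $\mathcal{Q}^{-1}[F]$ and the a priori bound $\mathcal{D}(\Omega^0)<3\kappa b_\Phi M_{G'}$. Where you genuinely diverge is the coercivity estimate for the symmetrized leading term: the paper first converts $\omega_\alpha-\tilde\omega_\alpha=\mathcal{Q}^{-1}[F](\omega_\alpha,\tilde\omega_\alpha)\bigl(F(\omega_\alpha)-F(\tilde\omega_\alpha)\bigr)$ inside the double sum, replaces $\mathcal{Q}^{-1}[F]$ by $G'(\nu_c)$ (generating its own exponentially decaying error $\mathcal{I}_{1312}$), and then the conservation law $\sum_\alpha\bigl(F(\omega_\alpha)-F(\tilde\omega_\alpha)\bigr)=0$ kills the cross term \emph{exactly}, yielding the diagonal sum $\sum_\alpha|F(\omega_\alpha)-F(\tilde\omega_\alpha)|\,|\omega_\alpha-\tilde\omega_\alpha|^{p-1}\ge\|\Omega-\tilde\Omega\|_p^p/M_{\mathcal{Q}^{-1}[F]}$; you instead expand directly in $X_\alpha=\omega_\alpha-\tilde\omega_\alpha$, accept the nonzero cross term $\bigl(\sum_\alpha X_\alpha\bigr)\bigl(\sum_\alpha\sgn(X_\alpha)|X_\alpha|^{p-1}\bigr)$, and show it is exponentially small by writing $\sum_\alpha X_\alpha=\sum_\alpha X_\alpha\bigl(1-G'(\nu_c)/\mathcal{Q}^{-1}[F](\omega_\alpha,\tilde\omega_\alpha)\bigr)$ and invoking Lemma \ref{L3.1}. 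Both routes rest on the same two structural inputs (conservation of $\sum_\alpha F(\omega_\alpha)$ together with $\sum_\alpha\nu_\alpha=\sum_\alpha\tilde\nu_\alpha$, and the exponential collapse $\omega_\alpha\to G(\nu_c)$), just deployed in a different order; yours buys a cleaner coercive constant $-p\kappa C_{131}$, which dominates the stated $-p\kappa G'(\nu_c)C_{131}/M_{\mathcal{Q}^{-1}[F]}$ since $G'(\nu_c)\le M_{\mathcal{Q}^{-1}[F]}$ (a point you should record explicitly), and a quick check shows your cross-term error, bounded via H\"older by $N^{1-1/p}\|X\|_p\cdot N^{1/p}\|X\|_p^{p-1}/N$, fits inside the paper's first error constant. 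The only places needing care in a write-up are (a) justifying $G'(\nu_c)\le M_{\mathcal{Q}^{-1}[F]}$ and $\nu_c\in[F(a_G),F(b_G)]$, and (b) replacing the phrase ``smoothness of $\mathcal{Q}^{-1}[F]$'' by the precise bound $|\mathcal{Q}^{-1}[F](\omega_\alpha,\tilde\omega_\alpha)-G'(\nu_c)|\le M_{\mathcal{Q}[G']}\,|\xi_\alpha-\nu_c|$ with $\xi_\alpha$ between $F(\omega_\alpha)$ and $F(\tilde\omega_\alpha)$, which is exactly the paper's estimate \eqref{Ap-4}.
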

\begin{proof}
\noindent (i)~We use Lemma \ref{L2.3} to obtain
\begin{align*}
\begin{aligned} \label{C-5}
|\mathcal{I}_{11}| &= \frac{p\kappa}{N}\bigg|\sum_{\alpha,\beta=1}^N \phi_{\alpha\beta} \sgn\big( \omega_\alpha - \tilde\omega_\alpha \big) \big| \omega_\alpha - \tilde\omega_\alpha \big|^{p-1} \frac{\omega_\beta - \omega_\alpha}{F'(\omega_\alpha)} \big[ \cos(\theta_\beta - \theta_\alpha)  -  \cos(\tilde\theta_\beta - \tilde\theta_\alpha)  \big]\bigg|\\
&\le \frac{p\kappa b_\Phi}{N m_{F'}} \mathcal{D}(\Omega^0) e^{-\Lambda_1 t} \sum_{\alpha,\beta=1}^N  \big| \omega_\alpha - \tilde\omega_\alpha \big|^{p-1} \big|  \cos(\theta_\beta - \theta_\alpha)  -  \cos(\tilde\theta_\beta - \tilde\theta_\alpha)  \big|\\
&\le \frac{p\kappa b_\Phi}{N m_{F'}} \mathcal{D}(\Omega^0) e^{-\Lambda_1 t} \sum_{\alpha,\beta=1}^N  \big| \omega_\alpha - \tilde\omega_\alpha \big|^{p-1} \big( | \theta_\alpha - \tilde\theta_\alpha | + | \theta_\beta - \tilde\theta_\beta|\big)\\
&\le \frac{2p\kappa b_\Phi}{m_{F'}} \mathcal{D}(\Omega^0) e^{-\Lambda_1 t}  \|\Omega -\tilde\Omega\|_p^{p-1} \| \Theta - \tilde\Theta \|_p.
\end{aligned}
\end{align*}
For the last inequality, we used the H\"older inequality to find
\begin{align*} 
\begin{aligned} 
& \sum_{\alpha,\beta=1}^N |\omega_\alpha - \tilde\omega_\alpha |^{p-1}\big( |\theta_\beta - \tilde\theta_\beta| + |\theta_\alpha - \tilde\theta_\alpha| \big)\\
&\le \bigg( \sum_{\alpha,\beta=1}^N |\omega_\alpha - \tilde\omega_\alpha |^{p-1\frac{p}{p-1}}  \bigg)^{\frac{p-1}{p}} \bigg(\sum_{\alpha,\beta=1}^N |\theta_\beta - \tilde\theta_\beta|^p \bigg)^\frac{1}{p} \\
&\hspace{.2cm}+N \bigg( \sum_{\alpha=1}^N  \big| \omega_\alpha - \tilde\omega_\alpha \big|^{p-1\cdot \frac{p}{p-1}}\bigg)^{\frac{p-1}{p}}\bigg( \sum_{\alpha=1}^N \big| \theta_\alpha - \tilde\theta_\alpha \big|^p  \bigg)^{\frac{1}{p}} \\
&= 2N \|\Omega -\tilde\Omega\|_p^{p-1} \| \Theta - \tilde\Theta \|_p.
\end{aligned}
\end{align*}
On the other hand, it follows from \eqref{sys:GKM2} and \eqref{C-3} that
\begin{equation} \label{C-6}
\cD(\Omega^0) \leq M_{G'} (\cD(\cV) +2\kappa b_\Phi) \leq  M_{G'} (\kappa m_\Phi + 2\kappa b_\Phi) < 3\kappa b_\Phi M_{G'}.
\end{equation}
This \eqref{C-6} implies
\begin{align*}
|\cI_{11}| \leq \frac{6p\kappa^2 b_\Phi^2 M_{G'}}{m_{F'}} e^{-\Lambda_1 t}  \|\Omega -\tilde\Omega\|_p^{p-1} \| \Theta - \tilde\Theta \|_p.
\end{align*}
Again, we use Lemma \ref{L2.3} to see
\begin{align*}
\begin{aligned}
|\mathcal{I}_{12}| &\leq \frac{p\kappa}{N} \sum_{\alpha,\beta=1}^N \bigg| \phi_{\alpha\beta} \big| \omega_\alpha - \tilde\omega_\alpha \big|^{p-1} (\omega_\beta - \omega_\alpha) \cos(\tilde\theta_\beta - \tilde\theta_\alpha) \bigg( \frac{1}{F'(\omega_\alpha)}  -\frac{1}{F'(\tilde\omega_\alpha)} \bigg) \bigg| \\
&\leq \frac{p\kappa b_\Phi}{N} \mathcal{D}(\Omega^0) e^{-\Lambda_1 t}  \sum_{\alpha,\beta=1}^N \big| \omega_\alpha - \tilde\omega_\alpha \big|^{p-1} \bigg| \frac{1}{F'(\omega_\alpha)}  -\frac{1}{F'(\tilde\omega_\alpha)} \bigg| \\
&\le \frac{3p\kappa^2 b_\Phi^2 M_{G'} M_{{\mathcal Q}[G^{\prime}]}}{m_{ {\mathcal Q}^{-1}[F]}} e^{-\Lambda_1 t}  \big\| \Omega - \tilde\Omega \big\|_p^{p},
\end{aligned}
\end{align*}
where the last inequality can be obtained by
\begin{align*}
\begin{aligned}
\bigg| \frac{1}{F'(\omega_\alpha)}  -\frac{1}{F'(\tilde\omega_\alpha)} \bigg| &= \big| G'(F(\omega_\alpha)) -G'(F(\tilde\omega_\alpha)) \big|  \\
&= \big| {\mathcal Q}[G^{\prime}]\big( F(\omega_\alpha), F(\tilde\omega_\alpha) \big) \big| \frac{|\omega_\alpha -\tilde\omega_\alpha|}{| {\mathcal Q}^{-1}[F](\omega_\alpha, \tilde\omega_\alpha)|} \leq \frac{M_{{\mathcal Q}[G^{\prime}]}} {m_{{\mathcal Q}^{-1}[F]} }\big| \omega_\alpha -\tilde\omega_\alpha \big|.
\end{aligned}
\end{align*}

\vspace{0.2cm}

\noindent (ii)~Since the estimate for ${\mathcal I}_{13}$ is very lengthy, we leave it to Appendix \ref{App-A}. \newline

\noindent (iii)~ It follows from Lemma \ref{L2.3} that
\begin{align*}
|\cI_{14}| &\leq \frac{p\kappa}{N} \sum_{\alpha,\beta=1}^N \bigg| \big| \omega_\alpha - \tilde\omega_\alpha \big|^{p-1} \big( \phi_{\alpha\beta} -\tilde\phi_{\alpha\beta} \big) \frac{\cos(\tilde\theta_\beta - \tilde\theta_\alpha)}{F'(\tilde\omega_\alpha)} (\tilde\omega_\beta - \tilde\omega_\alpha)\bigg| \\
&\leq \frac{p\kappa}{Nm_{F'}} \mathcal{D}(\tilde\Omega^0) e^{-\Lambda_1t} \sum_{\alpha,\beta=1}^N \big| \omega_\alpha - \tilde\omega_\alpha \big|^{p-1} \big| \phi_{\alpha\beta} -\tilde\phi_{\alpha\beta} \big| \\
&\leq \frac{3p\kappa^2 b_\Phi M_{G'}}{N^{\frac1p}m_{F'}}  e^{-\Lambda_1t} \|\Omega - \tilde\Omega\|^{p-1}_p \|\Phi - \tilde\Phi\|_p,
\end{align*}
where the last inequality can be obtained by \eqref{C-6} and
\begin{align*}
 \sum_{\alpha,\beta=1}^N \big| \omega_\alpha - \tilde\omega_\alpha \big|^{p-1} \big| \phi_{\alpha\beta}-\tilde\phi_{\alpha\beta} \big| &\leq \bigg( \sum_{\alpha,\beta=1}^N |\omega_\alpha - \tilde\omega_\alpha |^{p-1\frac{p}{p-1}}  \bigg)^{\frac{p-1}{p}} \bigg(\sum_{\alpha,\beta=1}^N \big| \phi_{\alpha\beta}-\tilde\phi_{\alpha\beta} \big|^p \bigg)^\frac{1}{p}  \\
& = N^{\frac{p-1}{p}} \|\Omega - \tilde\Omega\|^{p-1}_p \|\Phi - \tilde\Phi\|_p.
\end{align*}

\subsection{Proof of Theorem \ref{T3.1}} \label{sec:3.3} Now, we are ready to provide a proof of our first main result.

In \eqref{C-4}, we combine all the estimates for ${\mathcal I}_{1i}$ in Lemma \ref{L3.2} to get 
\begin{align*}
\begin{aligned}
 \frac{d}{dt} \sum_{\alpha=1}^N \big|\omega_\alpha - \tilde\omega_\alpha \big|^p &= \frac{d}{dt} \|\Omega - \tilde\Omega\|^p_p = p \|\Omega - \tilde\Omega\|^{p-1}_p \frac{d}{dt} \|\Omega - \tilde\Omega\|_p \\
&\leq -\frac{p\kappa G'(\nu_c) C_{131}}{M_H} \|\Omega - \tilde\Omega\|^p_p +p\Lambda_{21} e^{-\Lambda_1 t}  \|\Omega -\tilde\Omega\|_p^{p-1} \| \Theta - \tilde\Theta \|_p \\
&+p\Lambda_{22} e^{-\Lambda_1 t}  \big\| \Omega - \tilde\Omega \big\|_p^{p} +\frac{p\Lambda_{23}}{N^{\frac1p}} e^{-\Lambda_1t} \|\Omega - \tilde\Omega\|^{p-1}_p \|\Phi - \tilde\Phi\|_p,
\end{aligned}
\end{align*}
for some constants $\Lambda_{2i}=\Lambda_{2i}(\kappa,F,a_\Phi,b_\Phi,a_G,b_G)$, $i=1,2,3$. This implies
\begin{align*}
\frac{d}{dt} \|\Omega - \tilde\Omega\|_p &\leq -\frac{\kappa G'(\nu_c) C_{131}}{M_H} \|\Omega - \tilde\Omega\|_p +\Lambda_{21} e^{-\Lambda_1 t} \| \Theta - \tilde\Theta \|_p  \\
&+\Lambda_{22} e^{-\Lambda_1 t} \|\Omega - \tilde\Omega\|_p +\frac{\Lambda_{23}}{N^{\frac1p}} e^{-\Lambda_1t} \|\Phi - \tilde\Phi\|_p.
\end{align*}
On the other hand, we use the H\"older inequality to get
\begin{align*}
\begin{aligned}
& p \| \Theta - \tilde\Theta \|_p^{p-1}\frac{d}{dt}\| \Theta - \tilde\Theta \|_p \\
& \hspace{0.5cm} = \frac{d}{dt} \bigg( \sum_{\alpha=1}^N \big| \theta_\alpha - \tilde\theta_\alpha \big|^p \bigg) = p\sum_{\alpha=1}^N \big( \omega_\alpha - \tilde\omega_\alpha \big) \sgn\big( \theta_\alpha - \tilde\theta_\alpha \big) \big| \theta_\alpha - \tilde\theta_\alpha \big|^{p-1}\\
&\hspace{0.5cm} \le p\sum_{\alpha=1}^N \big| \omega_\alpha - \tilde\omega_\alpha \big| \big| \theta_\alpha - \tilde\theta_\alpha \big|^{p-1} = p \| \Omega - \tilde\Omega \|_p \| \Theta - \tilde\Theta \|_p^{p-1}.
\end{aligned}
\end{align*}
This yields
\begin{align*}
\begin{aligned}
\frac{d}{dt}\| \Theta - \tilde\Theta \|_p \le \| \Omega - \tilde\Omega \|_p.
\end{aligned}
\end{align*}
On the other hand, we use the framework $({\mathcal F}_A)$ to find 
\begin{align*}
1-\frac{\kappa G'(\nu_c) C_{131}}{M_{{\mathcal Q}^{-1}[F]}} = 1-\frac{\kappa G'(\nu_c)^2 m_\Phi \cos\theta_*}{M_{{\mathcal Q}^{-1}[F]}} \leq 1-\frac{\kappa m_{G'}^2 m_\Phi \cos\theta_*}{M_{{\mathcal Q}^{-1}[F]}} \leq 0.
\end{align*}
Hence, we can obtain a Gr\"onwall type inequality
\begin{align*}
\begin{aligned} \label{C-7}
& \frac{d}{dt} \Big( \| \Theta(t) - \tilde\Theta(t) \|_p + \| \Omega(t) - \tilde\Omega(t) \|_p \Big) \\
&\hspace{1cm} \le \Lambda_{21} e^{-\Lambda_1 t} \| \Theta(t) - \tilde\Theta(t) \|_p +\Lambda_{22} e^{-\Lambda_1 t} \|\Omega(t) - \tilde\Omega(t) \|_p +\frac{\Lambda_{23}}{N^{\frac1p}} e^{-\Lambda_1t} \|\Phi - \tilde\Phi\|_p.
\end{aligned}
\end{align*}
Again, one has 
\begin{align*}
& \frac{d}{dt} \bigg[  \| \Theta(t) - \tilde\Theta(t) \|_p + \| \Omega(t) - \tilde\Omega(t) \|_p +\frac{\Lambda_{23}}{N^{\frac1p}\Lambda_1} e^{-\Lambda_1t} \|\Phi - \tilde\Phi\|_p \bigg ] \\
&\hspace{1cm} \le \Lambda_{21} e^{-\Lambda_1 t} \| \Theta(t) - \tilde\Theta(t) \|_p +\Lambda_{22} e^{-\Lambda_1 t} \|\Omega(t) - \tilde\Omega(t) \|_p \\
&\hspace{1cm} \le \max\{\Lambda_{21}, ~\Lambda_{22}\} e^{-\Lambda_1 t} \bigg[  \| \Theta(t) - \tilde\Theta(t) \|_p + \| \Omega(t) - \tilde\Omega(t) \|_p +\frac{\Lambda_{23}}{N^{\frac1p}\Lambda_1} e^{-\Lambda_1t} \|\Phi - \tilde\Phi\|_p \bigg].
\end{align*}
Then, Gr\"onwall's lemma yields
\begin{align*}
& \| \Theta(t) - \tilde\Theta(t) \|_p + \| \Omega(t) - \tilde\Omega(t) \|_p +\frac{\Lambda_{23}}{N^{\frac1p}\Lambda_1} e^{-\Lambda_1t} \|\Phi - \tilde\Phi\|_p \\
&\hspace{1cm} \leq \bigg( \| \Theta^0 - \tilde\Theta^0 \|_p + \| \Omega^0 - \tilde\Omega^0 \|_p +\frac{\Lambda_{23} \|\Phi - \tilde\Phi\|_p}{N^{\frac1p}\Lambda_1} \bigg) \exp\bigg(\frac{\max\{\Lambda_{21}, ~\Lambda_{22}\}}{\Lambda_1}\bigg). 
\end{align*}
Therefore, we can conclude our desired result with
\begin{align*}
\Lambda_2 := \max\bigg\{1, ~\frac{\Lambda_{23}}{\Lambda_1}\bigg\} \times \exp\bigg(\frac{\max\{\Lambda_{21}, ~\Lambda_{22}\}}{\Lambda_1}\bigg) > 0.
\end{align*}
This completes the proof of Theorem \ref{T3.1}. \newline

As a direct application of Theorem \ref{T3.1}, we obtain a uniform stability of phases. 
\begin{corollary} \label{C3.1}
Suppose that the framework $({\mathcal F}_A)$ holds, and let $\Theta = \Theta(t)$ and $\tilde\Theta = \tilde\Theta(t)$ be two global solutions of \eqref{sys:GKM} subject to the initial data, natural frequency, and communication matrix triples $(\Theta^0, \cV, \Phi)$ and $(\tilde\Theta^0, \tilde\cV, \tilde\Phi)$, respectively. Then, there exists a positive constant $\Lambda_3 = \Lambda_3(\kappa, F, a_\Phi, b_\Phi, a_G, b_G, \theta_*)$ such that for $p\in[1,\infty]$,
\[
\sup_{0 \leq t < \infty} \| \Theta(t) - \tilde\Theta(t) \|_p \leq \Lambda_3 \bigg( \| \Theta^0 - \tilde\Theta^0 \|_p + \|\mathcal V - \tilde{\mathcal V}\|_p +\frac{\|\Phi - \tilde\Phi\|_p}{N^{\frac1p}} \bigg).
\]
\end{corollary}
\begin{proof}
As in the proof of Theorem \ref{T3.1}, we only consider the case of $p\in(1, \infty)$. Similar to the proof of Theorem \ref{T3.1}, one has 
\begin{align*}
\begin{aligned}
|\omega_\alpha -\tilde\omega_\alpha| &= \bigg| G\bigg(\nu_\alpha + \frac{\kappa}{N}\sum_{\beta=1}^N \phi_{\alpha\beta} \sin(\theta_\beta - \theta_\alpha) \bigg) - G\bigg(\tilde\nu_\alpha + \frac{\kappa}{N}\sum_{\beta=1}^N \tilde\phi_{\alpha\beta} \sin(\tilde\theta_\beta - \tilde\theta_\alpha) \bigg) \bigg| \\
&\leq M_{G'}|\nu_\alpha -\tilde\nu_\alpha| +\frac{\kappa M_{G'}}{N} \sum_{\beta=1}^N \big| \phi_{\alpha\beta} \sin(\theta_\beta - \theta_\alpha) -\tilde\phi_{\alpha\beta} \sin(\tilde\theta_\beta - \tilde\theta_\alpha) \big| \\
&\leq M_{G'}|\nu_\alpha -\tilde\nu_\alpha| +\frac{\kappa M_{G'}}{N} \sum_{\beta=1}^N \big| \phi_{\alpha\beta} -\tilde\phi_{\alpha\beta} \big|  \\
&\hspace{.2cm}+\frac{\kappa b_\Phi M_{G'}}{N} \sum_{\beta=1}^N \big| \sin(\theta_\beta - \theta_\alpha) -\sin(\tilde\theta_\beta - \tilde\theta_\alpha) \big| \\
&\leq M_{G'}|\nu_\alpha -\tilde\nu_\alpha| +\frac{\kappa M_{G'}}{N^{\frac1p}} \bigg( \sum_{\beta=1}^N \big| \phi_{\alpha\beta} -\tilde\phi_{\alpha\beta} \big|^p \bigg)^{\frac1p} +\kappa b_\Phi M_{G'} \big| \theta_\alpha -\tilde\theta_\alpha \big| \\
&\hspace{.2cm}+\frac{\kappa b_\Phi M_{G'}}{N^{\frac1p}} \bigg( \sum_{\beta=1}^N \big| \theta_\beta -\tilde\theta_\beta \big|^p \bigg)^{\frac1p}.
\end{aligned}
\end{align*}
Then, the Minkowski inequality implies
\begin{equation} \label{C-8}
\|\Omega-\tilde\Omega\|_p \leq M_{G'} \|\mathcal V -\tilde{\mathcal V} \|_p +2\kappa b_\Phi M_{G'} \|\Theta -\tilde\Theta\|_p +\frac{\kappa M_{G'}}{N^{\frac1p}} \|\Phi-\tilde\Phi\|_p.
\end{equation}
Now, we combine Theorem \ref{T3.1} and \eqref{C-8} to obtain
\begin{align*}
\| \Theta(t) - \tilde\Theta(t) \|_p &\leq \| \Theta(t) - \tilde\Theta(t) \|_p + \| \Omega(t) -\tilde\Omega(t) \|_p \\
&\leq \Lambda_2 \bigg( \| \Theta^0 - \tilde\Theta^0 \|_p + \| \Omega^0 -\tilde\Omega^0\|_p +\frac{\|\Phi - \tilde\Phi\|_p}{N^{\frac1p}} \bigg) \\
&\leq \Lambda_2 \bigg( \big(1+2\kappa b_\Phi M_{G'} \big) \|\Theta^0 -\tilde\Theta^0\|_p +M_{G'} \|\mathcal V -\tilde{\mathcal V} \|_p +\frac{1+\kappa M_{G'}}{N^{\frac1p}} \|\Phi-\tilde\Phi\|_p  \bigg).
\end{align*}
Therefore, we can conclude our desired result with
\begin{align*}
\Lambda_3 := \Lambda_2 \times \max\big\{ M_{G'}, 1+\kappa M_{G'} , 1+2\kappa b_\Phi M_{G'} \big\} > 0.
\end{align*}
\end{proof}

%
%
%
%
\section{The continuum general Kuramoto model} \label{sec:4}
\setcounter{equation}{0}
In this section, we study a global well-posedness of the continuum GK model \eqref{sys:CGKM}, uniform-in-time continuum limit of \eqref{sys:LGKM} and the uniform $L^{\infty}$-contraction property of the continuum GK model \eqref{sys:CGKM}.
\subsection{Gallery of main results} \label{sec:4.1}
In this subsection, we briefly summarize a set of main results for the continuum GK model without proofs.  First, we recall the concept of ``classical solution'' to the continuum GK model \eqref{sys:CGKM}. \newline
\begin{definition} \label{D4.1}
For $T\in(0, \infty]$, let $\theta = \theta(t, x)$ be a classical solution to the Cauchy problem \eqref{sys:CGKM} in the time-interval $[0, T]$ $([0, \infty)$ when $T=\infty)$, if the following conditions hold. \newline
\begin{enumerate}
\item The phase function $\theta$ satisfies the following regularity condition:
\[
\theta \in \mathcal C^1 \big( [0, T]; L^\infty(D) \big).
\]
\vspace{0.1cm}

\item The phase function $\theta$ satisfies the integro-differential equation pointwise:
\begin{align*}
F(\partial_t \theta(t,x)) = \nu(x) + \frac{\kappa}{\mu(D)}\int_D \phi(x,z)\sin(\theta(t, z)-\theta(t, x)) d\mu(z), 
\end{align*}
for all $t\in[0, T]$ and a.e. $x\in D.$

\vspace{0.1cm}

\item 
The phase function $\theta$ satisfies initial datum pointwise:
\[ \theta(0, x) = \theta^0(x), \quad \mbox{for a.e. $x \in D$}. \]
\end{enumerate}
\end{definition}

\vspace{0.5cm}

Recall that throughout the paper, $D\subset\bbr^d$ is a compact region with positive measure. In what follows, we we present a global unique solvability of a classical solution to the continuum GK model \eqref{sys:CGKM}.

\begin{proposition} \label{P4.1}
Suppose that system functions and initial data satisfy
\begin{align*}
\|\theta^0\|_\infty < \infty, \quad 0 < \|\nu\|_\infty +\|\phi\|_{L^\infty_xL^1_y} < \infty
\end{align*}
Then, there exists a unique global classical solution $\theta(t, x) \in \mathcal C^1 \big( [0, \infty); L^\infty(D) \big)$ to the continuum GK model \eqref{sys:CGKM}. Moreover, if we further assume that
\begin{align*}
\theta^0, \nu \in \mathcal C(D), \quad \phi \in \mathcal C(D^2),
\end{align*}
then we have $\theta(t, x) \in \mathcal C^1 \big( [0, \infty); \mathcal C(D) \big)$.
\end{proposition}
\begin{proof}
Since this can be treated in a canonical way, we leave it in Appendix \ref{App-B}.
\end{proof}
Second, we recall the concept of continuum limit from the lattice GK model \eqref{sys:LGKM} toward the continuum GK model \eqref{sys:CGKM}.
\begin{definition} \label{D4.2}
\cite{H-K-M1, H-K-M2, H-K-P-R-S2} For given $T \in (0,\infty]$, the continuum model \eqref{sys:CGKM} is “derivable” from the lattice model \eqref{sys:LGKM}  in $[0,T]$ $([0, \infty)$ when $T=\infty)$, if a solution to \eqref{sys:CGKM} can be obtained as a suitable limit of a sequence of lattice solutions to \eqref{sys:LGKM}, as the lattice spacing tends to zero.
\end{definition}

\vspace{0.2cm}

\noindent Next, we state a second sufficient framework $({\mathcal F}_B)$ leading to the uniform-in-time continuum limit:
\begin{itemize}
\item
$({\mathcal F}_B1)$:~$D$ satisfies a partition property $(\mathcal P)$, i.e., there exists a sequence of partition $\{\mathcal P^N\}_{N\geq1}$ such that
\begin{align*}\label{D-0}
\begin{aligned}
& \mathcal P^N = \{D^N_\alpha\}_{\alpha=1}^{2^N}, \quad \mathcal P^0 = \{D^0_1\} = \{D\}, \quad \lim_{N\to\infty} \max_{1\leq\alpha\leq2^N} \mathrm{diam}\big(D^N_\alpha\big) = 0, \\
& D^N_\alpha = D^{N+1}_{2\alpha-1} \uplus D^{N+1}_{2\alpha}, \quad \mu \big( D^{N+1}_{2\alpha-1} \big) = \mu \big( D^{N+1}_{2\alpha} \big), \quad 1\leq \alpha \leq 2^N.
\end{aligned}
\end{align*}
Note that $n$-cube, $n\geq1$ is a trivial example satisfying the above condition, and one can easily observe that $\mu(D^N_\alpha) = \mu(D)/2^N$ for all $1\leq \alpha \leq 2^N$.\newline
\vspace{0.1cm}
\item
$({\mathcal F}_B2)$:~The system functions $\nu,~\phi$ and initial datum $\theta^0$ are continuous. \newline
\vspace{0.1cm}
\item
$({\mathcal F}_B3)$:~system parameters satisfy the following set of conditions, i.e.,~there exist $b_\Phi \geq a_\Phi>0$, $-L < a_G < b_G < L$, and $\theta_*\in(0, \pi/2)$ such that
\begin{equation*} \label{D-1}
\begin{cases}
\displaystyle  \cD(\theta^0) \leq \theta_*, \quad a_\Phi \leq \phi(x, y) \leq b_\Phi, \quad \forall~x, y\in D, \\
\displaystyle \kappa  > \max \Big \{ \frac{\cD(\nu)}{ a_\Phi \sin\theta_*},~\frac{2M_{{\mathcal Q}^{-1}[F]}}{ a_\Phi m_{G'}^2 \cos\theta_*}    \Big \} > 0, \\
\displaystyle  F(a_G)+\kappa b_\Phi \leq \nu(x) \leq F(b_G)-\kappa b_\Phi, \quad \forall~x \in D,
\end{cases}
\end{equation*}
where $M_{{\mathcal Q}^{-1}[F]}$ is defined in \eqref{C-1} and \eqref{C-2}.
\end{itemize}

\vspace{0.5cm}

For given triple $(\nu, \phi,\theta^0)$, let $\{ \theta^N_\alpha \}_{\alpha=1}^{2^N} $ be a global smooth solution of the following Cauchy problem:
\begin{align*}
\begin{dcases}
F(\dot{\theta}_\alpha) = \nu^N_\alpha + \displaystyle\frac{\kappa}{2^{N}}\sum_{\beta=1}^{2^{N}} \phi^N_{\alpha\beta}\sin(\theta_\beta - \theta_\alpha),\quad \forall~t>0, \\
\theta_\alpha \big|_{t=0+} = \theta^{N,0}_\alpha, \quad \forall~\alpha=1,\cdots,2^{N},
\end{dcases}
\end{align*}
where system parameters and initial data are defined as
\begin{align*} \label{D-2}
\begin{aligned} 
& \nu^N_\alpha := \frac{2^N}{\mu(D)} \int_{D^N_\alpha} \nu(x) d\mu(x) ,\quad \theta^{N,0}_\alpha := \frac{2^N}{\mu(D)} \int_{D^N_\alpha} \theta^0(x)d\mu(x), \\
& \phi^N_{\alpha\beta} := \frac{2^{2N}}{\mu(D)^2} \iint_{D^N_\alpha \times D^N_\beta}  \phi(x,y) d\mu(x)d\mu(y), \quad \forall~\alpha, \beta = 1,\cdots,2^N, \quad \forall~N\geq0.
\end{aligned}
\end{align*}
By Lemma \ref{L4.1} in below and direct calculation, one can see that 
\begin{equation} \label{D-3}
\theta^N(t,x) :=\sum_{\alpha=1}^{2^N} \theta^N_\alpha(t) \mathbf{1}_{D^N_\alpha}(x)
\end{equation}
is a solution to the following Cauchy problem:
\begin{align*}
\begin{cases}
\displaystyle F(\partial_t \theta(t,x)) = \nu^N(x) +\frac{\kappa}{\mu(D)}\int_D \phi^N(x,z)\sin(\theta(t,z)-\theta(t,x))dz, \quad t>0, \vspace{.1cm}\\
\theta(0,x) = \theta^{N,0}(x),
\end{cases}
\end{align*}
where system functions and initial data are defined as
\begin{align} \label{D-4}
\nu^N := \sum_{\alpha=1}^{2^N} \nu^N_\alpha \mathbf{1}_{D^N_\alpha},\quad \theta^{N,0} :=  \sum_{\alpha=1}^{2^N} \theta^{N,0}_\alpha \mathbf{1}_{D^N_\alpha}, \quad \phi^N :=  \sum_{\alpha,\beta=1}^{2^N} \phi^N_{\alpha\beta} \mathbf{1}_{D^N_\alpha} \otimes \mathbf{1}_{D^N_\beta}.
\end{align}
Then, we will show the convergence of $\theta^N$ to justify the continuum limit procedure. 
\begin{theorem} \label{T4.1}
Suppose the framework $({\mathcal F}_B1)$-$({\mathcal F}_B3)$ hold, and let $\{\theta^N\}_{N\geq1}$ be a sequence of simple functions defined in \eqref{D-3}. Then, there exists unique limit $\theta\in\mathcal C^1 \big( [0, \infty); L^\infty (D) \big)$ satisfying the following two assertions:
\begin{enumerate}[(1)]
\item $\displaystyle \lim_{N\to\infty} \sup_{t\geq0} \underset{x\in D}{\mathrm{ess} \sup} |\theta^N(t, x) -\theta(t, x)| = 0.$
\vspace{0.1cm}
\item $\theta$ admits the continuous version which is the unique classical solution to the continuum GK model \eqref{sys:CGKM}.
\end{enumerate}
\end{theorem}
\begin{proof} We leave its proof in Section \ref{sec:4.2}. 
\end{proof}
\begin{remark}
One can obtain finite-in-time continuum limit under more generous setting using Lebesgue differentiation theorem (see Appendix \ref{App-C}).
\end{remark}
Third, we present the $L^\infty$-contraction of classical solutions obtained in Proposition \ref{P4.1} which corresponds to the continuum analogue of Corollary \ref{C3.1}. 
\begin{proposition} \label{P4.2}
Suppose the framework $({\mathcal F}_B1)$-$({\mathcal F}_B3)$ hold, and we also assume that there exist $a_\phi>0$ and $\theta_*\in(0, \pi/2)$ such that initial data and system functions satisfy
\[
\phi(x,y) \ge a_\phi>0,\quad 0<\mathcal{D}(\theta^0) ,\mathcal{D}(\tilde\theta^0) <\theta_*, \quad
\kappa  > \frac{\cD(\nu)}{a_\phi \sin \theta_*}, \quad
\|\theta^0 - \tilde \theta^0 \|_\infty  < \pi.
\]
Let $\theta, \tilde\theta \in \mathcal{C}^1([0,\infty); \mathcal{C}(D))$ be a global classical solution to \eqref{sys:CGKM} subject to continuous initial data $\theta^0$ and $\tilde\theta^0$, respectively. Then, we have $L^{\infty}$-contraction:
\[  \| \theta(t,\cdot) - \tilde\theta(t,\cdot) \|_\infty + \int_0^t {\mathcal P}(s) ds  \le\| \theta^0 - \tilde\theta^0 \|_\infty, \]
where ${\mathcal P}$ is the production rate functional defined by 
\[ {\mathcal P}(s):= \frac{\kappa m_{G'} a_\phi \sin2\theta_*}{2\theta_* \mu(D)} \Big( \mu(D)\| \theta(s,\cdot) - \tilde\theta(s,\cdot)\|_\infty - \|\theta(s,\cdot) - \tilde\theta(s,\cdot)\|_{L^1} \Big) \ge 0. \]
\end{proposition}
\begin{proof} We leave its proof in Section \ref{sec:4.3}
\end{proof}
In the following two subsections, we study proofs of Theorem \ref{T4.1} and Proposition \ref{P4.2}.
\subsection{Uniform-in-time continuum limit} \label{sec:4.2} 
In next lemma, we show that the piecewise constancy will propagate along the dynamics  \eqref{sys:CGKM}.
\begin{lemma} \label{L4.1}
Suppose that the communication weight function, natural frequency function, and initial data are simple functions in the sense that
\begin{align*}
D = \biguplus_{\alpha\in\Lambda} D_\alpha, \hspace{.2cm} \phi(x,y)=\phi_{\alpha\beta}, \hspace{.2cm} \nu(x)=\nu_\alpha, \hspace{.2cm} \theta^0(x) = \theta_\alpha^{0}, \hspace{.2cm} \forall~(x,y)\in D_\alpha \times D_\beta, \hspace{.2cm} \forall~\alpha, \beta\in\Lambda,
\end{align*}
where $\Lambda$ is a finite index set, and let $\theta=\theta(t, x)$ be a global solution to the continuum GK model \eqref{sys:CGKM}. Then the phase function $\theta$ is piecewise constant in the sense that
$$\theta(t, x)=\theta(t, z), \quad \forall~x, z \in D_\alpha, \quad \forall~t\ge 0,\quad \forall~\alpha\in\Lambda.$$
\end{lemma}
\begin{proof}
For fixed $x,z\in D_\alpha$, we define
\[
\eta(t, x,z):=\theta(t, x)-\theta(t, z), \quad {\mathcal Q}[F](x, y) :=
\begin{cases}
\displaystyle \frac{F(x)-F(y)}{x-y}, & x\ne y, \\
F'(x), & x=y.
\end{cases}
\]
Then, we have
\begin{align*}
\begin{aligned}
& {\mathcal Q}[F](\partial_t\theta(t, x), \partial_t\theta(t, z)) \partial_t\eta(t, x, z) \\
&\hspace{.6cm} =\nu(x)-\nu(z)+\frac{\kappa}{\mu(D)} \int_D \big[ \phi(x,y)\sin(\theta(t, y)-\theta(t, x)) -\phi(z,y)\sin(\theta(t, y)-\theta(t, z)) \big] dy \\
&\hspace{.6cm} = \frac{\kappa}{\mu(D)} \sum_{\beta\in\Lambda} \phi_{\alpha\beta} \int_{D_\beta} \big[ \sin(\theta(t, y)-\theta(t, x)) -\sin(\theta(t, y)-\theta(t, z)) \big] dy \\
&\hspace{.6cm} = -\frac{2\kappa}{\mu(D)} \sin\bigg( \frac{\eta(t, x, z)}{2} \bigg) \sum_{\beta\in\Lambda} \phi_{\alpha\beta} \int_{D_\beta} \cos\bigg( \theta(t, y)-\frac{\theta(t, x)+\theta(t, z)}{2} \bigg) dy \\
&\hspace{.6cm} = -\frac{\kappa \eta(t, x, z)}{\mu(D)} A\bigg( \frac{\eta(t, x, z)}{2} \bigg) \sum_{\beta\in\Lambda} \phi_{\alpha\beta} \int_{D_\beta} \cos\bigg( \theta(t, y)-\frac{\theta(t, x)+\theta(t, z)}{2} \bigg) dy,
\end{aligned}
\end{align*}
where the analytic function $A$ is defined by
\begin{align*}
A(x) :=
\begin{dcases}
\frac{\sin x}{x}, & x\ne0, \\
1, & x=0.
\end{dcases}
\end{align*}
Therefore, the fact that $\eta(0, x, z) = 0$ for all $x, z\in D_\alpha$ and the uniqueness of the solution implies
\begin{align*}
\eta(t, x, z) = 0, \quad \forall~x, z\in D_\alpha, \quad \forall~t\geq0,
\end{align*}
which gives the desired result.
\end{proof}
\begin{lemma} \label{L4.2}
Suppose the framework $({\mathcal F}_B1) - ({\mathcal F}_B3)$ hold, and let $(\nu^N)$, $(\theta^{N, 0})$, and $(\phi^N)$ be sequences of functions constructed in \eqref{D-3} and \eqref{D-4}. Then, one has the following uniform convergence:
$$
\lim_{N\to\infty} \Big( \|\theta^{N,0} - \theta^0\|_\infty +\|\nu^N-\nu\|_\infty +\|\phi^N-\phi\|_\infty \Big) = 0.
$$
\end{lemma}
\begin{proof}
It is enough to show that
\begin{align*}
\lim_{N\to\infty} \|\theta^{N,0} - \theta^0\|_\infty = 0.
\end{align*}
The other cases can be treated similarly. Let $\varepsilon$ be any positive constant. Since $\theta^0$ is uniformly continuous on $D$, there exists $\delta>0$ such that 
\[ |x-y|<\delta \quad \Longrightarrow \quad  |\theta^0(x) -\theta^0(y)| <\varepsilon, \]
Furthermore, we have $N^*\gg1$ such that
\begin{align*}
\mathrm{diam}\big(D^N_\alpha\big) \leq \delta, \quad 1\leq\alpha\leq2^N, \quad N\geq N^*.
\end{align*}
For $N\geq N^*$ and $x\in D^N_\alpha$, 
\begin{align*}
\big| \theta^{N, 0}(x) -\theta^0(x) \big| = \big| \theta^{N, 0}_\alpha -\theta^0(x) \big| = \bigg| \frac{2^N}{\mu(D)} \int_{D^N_\alpha} \big( \theta^0(y) -\theta^0(x) \big) d\mu(y) \bigg| < \varepsilon.
\end{align*}
This implies our desired result.
\end{proof}
Now, we are ready to provide a proof of Theorem \ref{T4.1}. Since the proof is very lengthy, we split its proof into several steps. \newline

\noindent $\bullet$~Step A (Construction of approximate solutions):~Fix $N_2 > N_1\geq1$ and note that
\begin{align*}
& D^{N_1}_\alpha = \biguplus_{\beta=1}^{2^{N_2-N_1}} D^{N_2}_{1-\beta+\alpha\times2^{N_2-N_1}} \\
& \qquad \implies \theta^{N_2}(t, x) -\theta^{N_1}(t, x) = \sum_{\alpha=1}^{2^{N_2}} \big[ \theta^{N_2}_\alpha(t) -\theta^{N_1}_{\lceil\alpha\times2^{N_1-N_2}\rceil}(t) \big] {\bf 1}_{D^{N_2}_\alpha}(x),
\end{align*}
where $\lceil x\rceil$ means the least integer greater than or equal to $x$. Hence, we define some copies of $\Theta^{N_1}$, $\cV^{N_1}$, and $\Phi^{N_1}$ as
\begin{align*}
& \theta^{N_1, N_2}_\alpha(t) := \theta^{N_1}_{\lceil\alpha\times2^{N_1-N_2}\rceil}(t), \quad \theta^{N_1, N_2, 0}_\alpha := \theta^{N_1, 0}_{\lceil\alpha\times2^{N_1-N_2}\rceil}, \quad \nu^{N_1, N_2}_\alpha := \nu^{N_1}_{\lceil\alpha\times2^{N_1-N_2}\rceil}, \\
& \phi^{N_1, N_2}_{\alpha\beta} := \phi^{N_1}_{\lceil\alpha\times2^{N_1-N_2}\rceil \lceil\beta\times2^{N_1-N_2}\rceil}, \quad \forall ~\alpha, \beta = 1, \cdots, 2^{N_2},
\end{align*}
and note that 
\begin{align*}
F\big( \dot\theta^{N_1, N_2}_\alpha \big) &= F\big( \dot\theta^{N_1}_{\lceil\alpha\times2^{N_1-N_2}\rceil} \big) \\
&= \nu^{N_1}_{\lceil\alpha\times2^{N_1-N_2}\rceil} +\frac{\kappa}{2^{N_1}}\sum_{\beta=1}^{2^{N_1}} \phi^{N_1}_{{\lceil\alpha\times2^{N_1-N_2}\rceil}\beta}\sin\big( \theta^{N_1}_\beta - \theta^{N_1}_{\lceil\alpha\times2^{N_1-N_2}\rceil} \big) \\
&= \nu^{N_1, N_2}_\alpha +\frac{\kappa}{2^{N_2}}\sum_{\beta=1}^{2^{N_1}} 2^{N_2-N_1} \phi^{N_1}_{{\lceil\alpha\times2^{N_1-N_2}\rceil}\beta}\sin\big( \theta^{N_1}_\beta - \theta^{N_1, N_2}_\alpha \big) \\
&= \nu^{N_1, N_2}_\alpha + \displaystyle\frac{\kappa}{2^{N_2}}\sum_{\beta=1}^{2^{N_1}} \sum_{\gamma=(\beta-1)2^{N_2-N_1}+1}^{\beta\times2^{N_2-N_1}} \phi^{N_1, N_2}_{\alpha\gamma} \sin\big( \theta^{N_1, N_2}_\gamma - \theta^{N_1, N_2}_\alpha \big) \\
&= \nu^{N_1, N_2}_\alpha + \displaystyle\frac{\kappa}{2^{N_2}} \sum_{\gamma=1}^{2^{N_2}} \phi^{N_1, N_2}_{\alpha\gamma} \sin\big( \theta^{N_1, N_2}_\gamma - \theta^{N_1, N_2}_\alpha \big).
\end{align*}
Thus, $\Theta^{N_1, N_2}$ is a solution to the following Cauchy problem:
\begin{align*}
\begin{dcases}
F(\dot{\theta}_\alpha) = \nu^{N_1, N_2}_\alpha + \displaystyle\frac{\kappa}{2^{N_2}}\sum_{\beta=1}^{2^{N_2}} \phi^{N_1, N_2}_{\alpha\beta}\sin(\theta_\beta - \theta_\alpha),\quad \forall~t>0, \\
\theta_\alpha \big|_{t=0+} = \theta^{N_1, N_2, 0}_\alpha, \quad \forall~\alpha=1,\cdots,2^{N_2}.
\end{dcases}
\end{align*}

\vspace{0.2cm}

\noindent $\bullet$~Step B (Extracting a limit function):~It follows from
\begin{align*}
\sum_{\alpha=1}^{2^{N_2}} \nu^{N_1, N_2}_\alpha = \sum_{\alpha=1}^{2^{N_2}} \nu^{N_1}_{\lceil\alpha\times2^{N_1-N_2}\rceil} = 2^{N_2-N_1} \sum_{\alpha=1}^{2^{N_1}} \nu^{N_1}_\alpha = \frac{2^{N_2}}{\mu(D)} \int_D \nu(x) dx = \sum_{\alpha=1}^{2^{N_2}} \nu^{N_2}_\alpha,
\end{align*}
and assumptions that we can apply Corollary \ref{C3.1} to $\Theta^{N_1, N_2}$ and $\Theta^{N_2}$ to see
\begin{align} 
\begin{aligned} \label{D-5}
& \big\| \theta^{N_2}(t, \cdot) -\theta^{N_1}(t, \cdot) \big\|_\infty \\
&\hspace{1cm} = \max_{1 \leq \alpha \leq 2^{N_2}} \big| \theta^{N_2}_\alpha(t) -\theta^{N_1, N_2}_\alpha (t) \big| = \big\| \Theta^{N_2}(t) -\Theta^{N_1, N_2}(t) \big\|_\infty \\
&\hspace{1cm} \lesssim \big\| \theta^{N_2}(0) -\theta^{N_1, N_2}(0) \big\|_\infty +\big\| \mathcal V^{N_2} -\mathcal V^{N_1, N_2} \big\|_\infty +\big\| \Phi^{N_2} -\Phi^{N_1, N_2} \big\|_\infty \\
&\hspace{1cm} = \big\| \theta^{N_2, 0} -\theta^{N_1, 0} \big\|_\infty +\big\| \nu^{N_2} -\nu^{N_1} \big\|_\infty +\big\| \phi^{N_2} -\phi^{N_1} \big\|_\infty.
\end{aligned}
\end{align} 
By Lemma \ref{L4.2}, the R.H.S. of \eqref{D-5} tends to zero as $N_1, N_2 \to \infty$. Thus, $\big\{ \theta^{N}(t, \cdot) \big\}_{N=1}^\infty$ is a Cauchy sequence in $L^\infty(D)$, and there exists a pointwise limit $\theta^*(t, \cdot)\in L^\infty(D)$ such that 
\[ 
\theta^{N}(t, \cdot) \to \theta^*(t, \cdot), \quad \mbox{as $N \to \infty$}. 
\]
\noindent $\bullet$~Step C (The limit function satisfies the first assertion):~Especially, we can say $\theta^*(0, \cdot) = \theta^0$ by taking the continuous version. In \eqref{D-5}, we let $N_2 \to \infty$ to obtain
\begin{equation*} \label{D-6}
\underset{x\in D}{\mathrm{ess} \sup} \big| \theta^*(t, x) -\theta^{N_1}(t, x) \big| \lesssim \Big( \big\| \theta^0 -\theta^{N_1, 0} \big\|_\infty +\big\| \nu -\nu^{N_1} \big\|_\infty +\big\| \phi -\phi^{N_1} \big\|_\infty \Big).
\end{equation*}
Again, we take $N_1 \to \infty$ and use Lemma \ref{L4.2} to find the first assertion.

\vspace{0.2cm}

\noindent $\bullet$~Step D (The limit function is a classical solution):  We check defining relations in Definition \ref{D4.1} one by one. \newline

\noindent $\diamond$~Step D.1 (Regularity of the limit function):~ for arbitrary $\varepsilon>0$, we fix $N\gg1$ such that
\begin{align*}
\big\| \theta^0 -\theta^{N, 0} \big\|_\infty +\big\| \nu -\nu^N \big\|_\infty +\big\| \phi -\phi^N \big\|_\infty < \varepsilon,
\end{align*}
and find $\delta>0$ such that
\begin{align*}
\big\| \theta^N(t+h, \cdot) -\theta^N(t, \cdot) \big\|_\infty < \varepsilon, \quad \forall~h\in(-\delta, \delta).
\end{align*}
We use \eqref{D-5} to see
\begin{align*}
\begin{aligned}
& \underset{x\in D}{\mathrm{ess} \sup} \big| \theta^*(t+h, x) -\theta^*(t, x) \big| \\
&\hspace{1cm} \leq \underset{x\in D}{\mathrm{ess} \sup} \big| \theta^*(t+h, x) -\theta^N(t+h, x) \big| \\
&\hspace{1,2cm} +\big\| \theta^N(t+h, \cdot) -\theta^N(t, \cdot) \big\|_\infty +\underset{x\in D}{\mathrm{ess} \sup} \big| \theta^N(t, x) -\theta^*(t, x) \big| \\
&\hspace{1cm} \lesssim 2\big\| \theta^0 -\theta^{N, 0} \big\|_\infty +2\big\| \nu -\nu^N \big\|_\infty +2\big\| \phi -\phi^N \big\|_\infty +\varepsilon < 3\varepsilon, \quad \forall~h\in(-\delta, \delta),
\end{aligned}
\end{align*}
so that we can conclude that $\theta^*\in\mathcal C \big( [0, \infty); L^\infty(D) \big)$. \newline

\noindent $\diamond$~Step D.2 (The limit function satisfies the integro-differential equation):~we claim that $\theta^*$ satisfies
\begin{align} \label{D-7}
\theta^*(t,x) -\theta^{0}(x) = \int_0^t G\bigg( \nu(x) +\frac{\kappa}{\mu(D)}\int_D \phi(x,z)\sin\big( \theta^*(z,s) -\theta^*(x,s) \big)dz \bigg) ds,
\end{align}
for all $t\geq0$ and almost every $x\in D$. Note that once we have \eqref{D-7}, then we have $\theta^*\in\mathcal C^1 \big( [0, \infty); L^\infty(D) \big)$ and moreover, the uniqueness given in Proposition \ref{P4.1} enables us to choose the continuous version $\theta^*\in\mathcal C^1 \big( [0, \infty); C(D) \big)$.  \newline

\noindent {\it Proof of \eqref{D-7}}:~For fixed $t\geq0$, we have
\begin{align}
\begin{aligned} \label{D-8}
& \theta^N(t,x) -\theta^{N, 0}(x) \\
& \hspace{1cm} = \int_0^t G\bigg( \nu^N(x) +\frac{\kappa}{\mu(D)}\int_D \phi^N(x,z)\sin\big( \theta^N(z,s) -\theta^N(x,s) \big)dz \bigg) ds.
\end{aligned}
\end{align}
It is easy to check that the LHS of \eqref{D-8} converges to $\theta^*(t,x) -\theta^{0}(x)$ for almost every $x\in D$ as $N$ goes to infinity. Moreover, it follows from
\begin{align*}
& \bigg\| \int_0^t \bigg[ G\bigg( \nu^N +\frac{\kappa}{\mu(D)}\int_D \phi^N(\cdot,z)\sin\big( \theta^N(z,s) -\theta^N(\cdot,s) \big)dz \bigg) \\
&\hspace{1cm} -G\bigg( \nu +\frac{\kappa}{\mu(D)}\int_D \phi(\cdot,z)\sin\big( \theta^*(z,s) -\theta^*(\cdot,s) \big)dz \bigg) \bigg] ds \bigg\|_\infty \\
&\hspace{1cm} \leq tM_{G'} \bigg( \big\| \nu -\nu^N \big\|_\infty +\kappa\big\| \phi -\phi^N \big\|_\infty +2\kappa b_\Phi \sup_{0\leq s\leq t} \underset{x\in D}{\mathrm{ess} \sup} \big| \theta^*(s, x) -\theta^N(s, x) \big| \bigg),
\end{align*}
Lemma \ref{L4.2} and Step C  that the RHS of \eqref{D-8} converges to
\begin{align*}
\int_0^t G\bigg( \nu(x) +\frac{\kappa}{\mu(D)}\int_D \phi(x,z)\sin\big( \theta^*(z,s) -\theta^*(x,s) \big)dz \bigg) ds
\end{align*}
for almost every $x\in D$ as $N$ goes to infinity. This complete the verification of \eqref{D-8}.
\end{proof}
\subsection{$L^\infty$-contraction} \label{sec:4.3}
In this subsection, we present the continuum analogue of Lemma \ref{L2.1} and apply this to prove the $L^\infty$-contraction property of the solution operator to the continuum GK model \eqref{sys:CGKM}.
\begin{lemma} \label{L4.3}
Suppose that there exist $a_\phi>0$ and $\theta_*\in(0, \pi/2)$ such that initial data and system functions satisfy
\begin{gather*}
\phi(x,y) \ge a_\phi, \quad 0< \mathcal{D}(\theta^0) < \theta_*,\quad \kappa  > \frac{\cD(\nu)}{a_\phi \sin \theta_*},
\end{gather*}
and let $\theta\in \mathcal{C}^1([0,\infty); \mathcal{C}(D))$ be a global classical solution to \eqref{sys:CGKM} whose existence is guaranteed by Proposition \ref{P4.1}. Then $\theta_*$ is an upper bound for the diameter $\mathcal{D}(\theta(t))$:
\[ \mathcal{D}(\theta(t, \cdot)) \le \theta_*,\quad\forall ~t > 0. \]
\end{lemma}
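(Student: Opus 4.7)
\medskip

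\noindent\textbf{Proof sketch of Lemma \ref{L4.3}.}
The plan is to mimic the finite-dimensional contradiction argument of Lemma \ref{L2.1}, replacing the discrete max/min indices by points in the compact domain $D$ at which the continuous profile $\theta(t,\cdot)$ attains its extrema. Concretely, set
\[
\mathcal{T}:=\{t>0:\mathcal{D}(\theta(s,\cdot))\le\theta_*\ \text{for all } s\in[0,t]\},\quad t_*:=\sup\mathcal{T}.
\]
The initial condition $\mathcal{D}(\theta^0)<\theta_*$ together with the continuity of $t\mapsto\theta(t,\cdot)$ in $L^\infty(D)$ (and hence the continuity of $t\mapsto\mathcal{D}(\theta(t,\cdot))$) gives $t_*>0$. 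I would argue by contradiction, assuming $t_*<\infty$, so that $\mathcal{D}(\theta(t_*,\cdot))=\theta_*$ and
$\tfrac{d^+}{dt}\big|_{t=t_*}\mathcal{D}(\theta(t,\cdot))\ge 0$.

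Using compactness of $D$ and continuity of $\theta(t_*,\cdot)$, pick $x_M,x_m\in D$ with
\[
\theta(t_*,x_M)=\max_{x\in D}\theta(t_*,x),\qquad \theta(t_*,x_m)=\min_{x\in D}\theta(t_*,x),
\]
so $\theta(t_*,x_M)-\theta(t_*,x_m)=\theta_*$. By a standard envelope argument (Danskin's theorem, valid since $\partial_t\theta\in\mathcal{C}(D)$),
\[
\tfrac{d^+}{dt}\big|_{t=t_*}\mathcal{D}(\theta(t,\cdot))\le\partial_t\theta(t_*,x_M)-\partial_t\theta(t_*,x_m).
\]
Since $F$ is strictly increasing, it therefore suffices to prove
\[
F\big(\partial_t\theta(t_*,x_M)\big)-F\big(\partial_t\theta(t_*,x_m)\big)<0,
\]
which will contradict the above nonnegativity.

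Evaluating \eqref{sys:CGKM}$_1$ at $(t_*,x_M)$ and $(t_*,x_m)$ and subtracting,
\begin{align*}
F\big(\partial_t\theta(t_*,x_M)\big)-F\big(\partial_t\theta(t_*,x_m)\big)
&=\nu(x_M)-\nu(x_m)\\
&\quad-\frac{\kappa}{\mu(D)}\int_D\!\Big[\phi(x_M,z)\sin\big(\theta(t_*,x_M)-\theta(t_*,z)\big)\\
&\qquad\qquad\qquad\;+\phi(x_m,z)\sin\big(\theta(t_*,z)-\theta(t_*,x_m)\big)\Big]d\mu(z).
\end{align*}
For each $z\in D$, set $a(z):=\theta(t_*,x_M)-\theta(t_*,z)\ge 0$ and $b(z):=\theta(t_*,z)-\theta(t_*,x_m)\ge 0$; then $a(z)+b(z)=\theta_*\in(0,\pi/2)$ and both terms lie in $[0,\theta_*]$. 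An elementary concavity calculation on $[0,\theta_*]$ shows that
$a\mapsto\sin a+\sin(\theta_*-a)$ attains its minimum on the endpoints with value $\sin\theta_*$, hence
\[
\sin a(z)+\sin b(z)\ge\sin\theta_*\quad\text{for a.e. }z\in D.
\]
Combining this with $\phi\ge a_\phi$ and $\nu(x_M)-\nu(x_m)\le\mathcal{D}(\nu)$ yields
\[
F\big(\partial_t\theta(t_*,x_M)\big)-F\big(\partial_t\theta(t_*,x_m)\big)\le\mathcal{D}(\nu)-\kappa a_\phi\sin\theta_*<0,
\]
by the standing assumption $\kappa>\mathcal{D}(\nu)/(a_\phi\sin\theta_*)$. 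This is the desired contradiction, so $t_*=\infty$ and the lemma follows.

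The main obstacle is the continuum analogue of the discrete inequality $\sin(a+b)<\sin a+\sin b$ used in Lemma \ref{L2.1}: here the pair $(a(z),b(z))$ varies with $z$ but is constrained by $a(z)+b(z)=\theta_*$, so one must exploit precisely this constraint to obtain a uniform pointwise lower bound for the integrand. The only other delicate point is justifying the envelope inequality for the Dini derivative of $\mathcal{D}(\theta(t,\cdot))$, which follows from $\theta\in\mathcal{C}^1([0,\infty);\mathcal{C}(D))$ and the compactness of $D$.
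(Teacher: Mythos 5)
Your proposal is correct, and its heart --- the pointwise lower bound $\phi(x_M,z)\sin a(z)+\phi(x_m,z)\sin b(z)\ge a_\phi\sin\theta_*$ obtained from the subadditivity/concavity of $\sin$ on $[0,\theta_*]\subset[0,\pi/2]$, yielding $\mathcal D(\nu)-\kappa a_\phi\sin\theta_*<0$ --- is exactly the estimate the paper uses. The difference is only in how the contradiction is closed. You work at the exact threshold $\theta_*$, take $t_*=\sup$ of the good set, and differentiate the diameter functional itself, which forces you to invoke a Danskin-type envelope inequality for $\frac{d^+}{dt}\mathcal D(\theta(t,\cdot))$ over the (possibly non-singleton) sets of maximizers and minimizers. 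The paper instead shifts the threshold to $\theta_*+\varepsilon$ with $0<\varepsilon<\pi/2-\theta_*$, takes $t_*=\inf\{t:\mathcal D(\theta(t,\cdot))>\theta_*+\varepsilon\}$, fixes one extremal pair $(x,y)$ at $t_*$, and shows $\frac{d}{dt}\big|_{t=t_*}(\theta(t,x)-\theta(t,y))<0$ for that single pair; this alone implies $\mathcal D(\theta(t_*-\delta,\cdot))\ge\theta(t_*-\delta,x)-\theta(t_*-\delta,y)>\theta_*+\varepsilon$ for small $\delta$, contradicting minimality of $t_*$ without ever differentiating the supremum. Your route is fine provided you actually justify the envelope step (the regularity $\theta\in\mathcal C^1([0,\infty);\mathcal C(D))$ and compactness of $D$ do suffice, and your estimate is uniform over all extremal pairs, so whichever maximizer/minimizer realizes the Dini derivative still gives a strictly negative value); the paper's $\varepsilon$-shift buys a more elementary argument at the cost of an extra limiting parameter. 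One cosmetic point: the strict inequality $\mathcal D(\theta^0)<\theta_*$ in the hypothesis is what guarantees $t_*>0$ in your setup, so you are right to use it there.
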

\begin{proof}
We take $\varepsilon$ that
$$0<\varepsilon<\frac{\pi}{2} - \theta_*.$$
We assume that
$$ \mathcal{T}_\varepsilon := \{ t>0\,:\,\mathcal D(\theta(t)) > \theta_* + \varepsilon \}\neq \emptyset.$$
By the continuity of $\mathcal D(\theta(t))$, we may set
$$ t_* := \inf \mathcal{T}_\varepsilon > 0,\quad \mathcal D(\theta(t_*)) = \theta_* + \varepsilon. $$
We fix a pair $(x,y)\in D^2$ so that
$$ \mathcal D(\theta(t_*)) = \theta(t_*,x) -\theta(t_*,y).$$
Now, we have some point $P_{x,y}$ as a result of mean value theorem such that
\begin{align*}
\begin{aligned}
&\frac{d}{dt}\bigg|_{t=t_*} \big( \theta(t,x) - \theta(t,y) \big) \\
&=
G\bigg( \nu(x) + \frac{\kappa}{\mu(D)}\int_D \phi(x,z) \sin(\theta(t_*,z) - \theta(t_*,x)) dz \bigg) \\
&\hspace{.2cm}- G\bigg( \nu(y) + \frac{\kappa}{\mu(D)}\int_D \phi(y,z) \sin(\theta(t_*,z) - \theta(t_*,y)) dz \bigg)\\
&\le G'(P_{x,y})\bigg(\cD(\nu) -  \frac{\kappa}{\mu(D)} \int_D \bigg( \phi(x,z) \sin(\theta(t_*,x) - \theta(t_*,z))  \\
&\hspace{5.2cm}+  \phi(y,z) \sin(\theta(t_*,z) - \theta(t_*,y))   \bigg)dz
\bigg)\\
&\le G'(P_{x,y})\bigg(\cD(\nu)  -  \frac{\kappa a_\phi}{\mu(D)} \int_D \bigg(  \sin(\theta(t_*,x) - \theta(t_*,z)) +  \sin(\theta(t_*,z) - \theta(t_*,y))   \bigg)dz
\bigg)\\
&\le G'(P_{x,y})\bigg(\cD(\nu)  - \kappa a_\phi \sin\big(\theta(t_*,x) - \theta(t_*,y) \big)\bigg) \\
&< G'(P_{x,y})\bigg(\cD(\nu)  - \kappa a_\phi \sin\theta_* \bigg) < 0.
\end{aligned}
\end{align*}
However, this implies the existence of $\delta\in(0,t_*)$ satisfying
$$\mathcal{D}_\theta(t_* - \delta) \ge \theta(t_* - \delta,x)-\theta(t_* - \delta,y) > \theta(t_* ,x)- \theta(t_* ,y) = \theta_* + \varepsilon.$$
This yields contradiction to the fact that $t_* = \inf \mathcal{T}$. Thus $\mathcal{T} = \emptyset$ and we have the desired conclusion.
\end{proof}

\vspace{0.2cm}

Now we are ready to provide a proof of Proposition \ref{P4.2}.  If we have $\mathcal{C}^1$-regularity to the global solution then we can get $L^\infty$-stability.
Suppose that there exists positive $\delta<\pi-\| \theta^0 - \tilde\theta^0 \|_\infty$ such that
\begin{gather*}
\mathcal{T}_\delta := \bigg\{t>0 : \|\theta(t,\cdot) - \tilde\theta(t,\cdot)\|_\infty +\int_0^t {\mathcal P}(s)ds \ge \|\theta^0 - \tilde\theta^0 \|_\infty + \delta \bigg\} \neq \emptyset.
\end{gather*}
We set 
\[ \tau :=\inf\mathcal{T}_\delta \in(0,\infty). \]
This implies
\begin{align} \label{D-2-1}
\|\theta(\tau,\cdot) - \tilde\theta(\tau,\cdot)\|_\infty +\int_0^{\tau} {\mathcal P}(s)ds = \|\theta^0 - \tilde\theta^0 \|_\infty + \delta  <\pi.
\end{align}
We fix $x\in D$ such that
$$ \| \theta(\tau,\cdot) - \tilde\theta(\tau,\cdot) \|_\infty = |\theta(\tau,x) - \tilde\theta(\tau,x)|.$$
It follows
\begin{align*}
\begin{aligned}
& \partial_t | \theta(\tau,x) - \tilde\theta(\tau,x) | \\
&\hspace{1cm} = \sgn(\theta(\tau,x) - \tilde\theta(\tau,x)) 
\bigg[  G\bigg( \nu(x) + \frac{\kappa}{\mu(D)} \int_D \phi(x,y) \sin(\theta(\tau,y) - \theta(\tau,x) )dy \bigg)
\\
&\hspace{3cm} -G\bigg(\nu(x) + \frac{\kappa}{\mu(D)}\int_D \phi(x,y) \sin(\tilde\theta(\tau,y) -\tilde\theta(\tau,x) ) \bigg) \bigg].
\end{aligned}
\end{align*}
Note that
\begin{align*}
\begin{aligned}
&\sin(\theta(\tau, y)-\theta(\tau, x)) - \sin(\tilde\theta(\tau, y)-\tilde\theta(\tau, x)) \\
& =2\cos \frac{\theta(\tau, y)-\theta(\tau, x) + (\tilde\theta(\tau, y)-\tilde\theta(\tau, x))}{2} \sin \frac{\theta(\tau, y)-\tilde\theta(\tau, y) - (\theta(\tau, x)-\tilde\theta(\tau, x))}{2}\\
& =2\cos\frac{\theta(\tau, y)-\theta(\tau, x) + (\tilde\theta(\tau, y)-\tilde\theta(\tau, x))}{2} \sin\frac{\theta(\tau, y) - \tilde\theta(\tau, y)}{2}\cos \frac{\theta(\tau, x) - \tilde\theta(\tau, x)}{2} \\
& \hspace{.2cm}-2\cos\frac{\theta(\tau, y)-\theta(\tau, x) + (\tilde\theta(\tau, y)-\tilde\theta(\tau, x))}{2}\cos \frac{\theta(\tau, y) - \tilde\theta(\tau, y)}{2} \sin\frac{\theta(\tau, x) - \tilde\theta(\tau, x)}{2}.
\end{aligned}
\end{align*}
It follows from Lemma \ref{L4.3} and \eqref{D-2-1} that
\begin{align*}
\begin{aligned}
&\sgn(\theta(\tau,x) - \tilde\theta(\tau,x)) \big[ \sin(\theta(\tau, y)-\theta(\tau, x)) - \sin(\tilde\theta(\tau, y)-\tilde\theta(\tau, x)) \big] \\
& \le 2\cos \frac{\theta(\tau, y)-\theta(\tau, x) + (\tilde\theta(\tau, y)-\tilde\theta(\tau, x))}{2}\sin\bigg|\frac{\theta(\tau, y) - \tilde\theta(\tau, y)}{2}\bigg|\cos\bigg|\frac{\theta(\tau, x) - \tilde\theta(\tau, x)}{2}\bigg|
\\
& \hspace{.2cm}-2\cos \frac{\theta(\tau, y)-\theta(\tau, x) + (\tilde\theta(\tau, y)-\tilde\theta(\tau, x))}{2}\cos\bigg|\frac{\theta(\tau, y) - \tilde\theta(\tau, y)}{2}\bigg|\sin\bigg|\frac{\theta(\tau, x) - \tilde\theta(\tau, x)}{2}\bigg|\\
& = 2\cos\frac{\theta(\tau, y)-\theta(\tau, x) + (\tilde\theta(\tau, y)-\tilde\theta(\tau, x))}{2} \sin \bigg( \bigg|\frac{\theta(\tau, y) - \tilde\theta(\tau, y)}{2}\bigg| -
\bigg|\frac{\theta(\tau, x) - \tilde\theta(\tau, x)}{2}\bigg| \bigg)\\
&\le 0.
\end{aligned}
\end{align*}
Thus, we have
\begin{align*}
\begin{aligned}
& \partial_t | \theta(\tau,x) - \tilde\theta(\tau,x) | \\
&\hspace{0.2cm} \leq \frac{m_{G'}\kappa}{\mu(D)} \int_D \phi(x,y) \sgn(\theta(\tau,x) - \tilde\theta(\tau,x)) \big[ \sin(\theta(\tau, y)-\theta(\tau, x)) - \sin(\tilde\theta(\tau, y)-\tilde\theta(\tau, x)) \big] dy \\
&\hspace{0.2cm} \le
\frac{2m_{G'} a_\phi \kappa\cos\theta_*}{\mu(D)} \int_D \sin \bigg( \bigg|\frac{\theta(\tau, y) - \tilde\theta(\tau, y)}{2}\bigg| - \bigg|\frac{\theta(\tau, x) - \tilde\theta(\tau, x)}{2}\bigg| \bigg) dy\\
&\hspace{0.2cm} \le \frac{m_{G'} a_\phi \kappa \sin\theta_*\cos\theta_*}{\theta_* \mu(D)} \int_D \big( \big|\theta(\tau, y) - \tilde\theta(\tau, y) \big| - \big|\theta(\tau, x) - \tilde\theta(\tau, x)\big| \big) dy\\
&\hspace{0.2cm} = \frac{m_{G'} a_\phi \kappa\sin2\theta_*}{2\theta_*\mu(D)} \big(\|\theta(\tau,\cdot) - \tilde\theta(\tau,\cdot)\|_{L^1} - \mu(D)\| \theta(\tau,\cdot) - \tilde\theta(\tau,\cdot)\|_{L^\infty} \big) \\
&\hspace{0.2cm} = -{\mathcal P}(\tau),
\end{aligned}
\end{align*}
where we used the following relations:
\begin{align*}
& \bigg| \bigg|\frac{\theta(\tau, y) - \tilde\theta(\tau, y)}{2}\bigg| - \bigg|\frac{\theta(\tau, x) - \tilde\theta(\tau, x)}{2}\bigg| \bigg| \leq \bigg| \frac{\theta(\tau, x) -\theta(\tau, y) +\tilde\theta(\tau, x)-\tilde\theta(\tau, y)}{2}\bigg| \leq \theta_*, \\
& \sin \bigg( \bigg|\frac{\theta(\tau, y) - \tilde\theta(\tau, y)}{2}\bigg| - \bigg|\frac{\theta(\tau, x) - \tilde\theta(\tau, x)}{2}\bigg| \bigg) \\
&\hspace{.5cm}\leq \frac{\sin\theta_*}{2\theta_*} \big( \big|\theta(\tau, y) - \tilde\theta(\tau, y) \big| - \big|\theta(\tau, x) - \tilde\theta(\tau, x)\big| \big).
\end{align*}
Hence, we obtain
\[
\frac{\partial}{\partial t}\bigg|_{t=\tau} \bigg( |\theta(t,x) - \tilde\theta(t,x)| + \int_0^t {\mathcal P}(s) ds \bigg) \le 0.
\]
Therefore, there exists $h\in(0,\tau)$ such that
\begin{align*}
\begin{aligned}
& \| \theta(\tau - h,\cdot) - \tilde\theta(\tau - h,\cdot) \|_{L^\infty} + \int_0^{\tau- h} {\mathcal P}(s) ds\\
& \hspace{0.5cm} \ge | \theta(\tau - h, x) - \tilde\theta(\tau -h, x)| +\int_0^{\tau- h} {\mathcal P}(s) ds \ge | \theta(\tau, x) - \tilde\theta(\tau, x)| +\int_0^{\tau}  {\mathcal P}(s)ds\\
& \hspace{0.5cm} = \|  \theta(\tau,\cdot) - \tilde\theta(\tau,\cdot) \|_{L^\infty} + \int_0^{\tau} {\mathcal P}(s)ds = \|\theta^0 - \tilde\theta^0 \|_{L^\infty} + \delta.
\end{aligned}
\end{align*}
This contradicts the fact that $\tau = \inf\mathcal{T}_\delta$. This completes the proof of Proposition \ref{P4.2}.

%
%
%
%
\section{The kinetic general Kuramoto model} \label{sec:5}
\setcounter{equation}{0} 
In this section, we present a uniform-in-time mean-field limit for the GK model \eqref{sys:GKM} to the kinetic GK model \eqref{sys:KGKM} in a measure theoretic framework.

\subsection{Measure theoretic framework}\label{sec:5.1} In this subsection, we provide a concept of global measure-valued solution to the kinetic GK model \eqref{sys:KGKM}, following the presentation in \cite{H-K-M-Rug-Z, H-K-P-Z, H-K-Z, H-Liu}. \newline

For $q\in[1, \infty)$, we define $(\Pi_q, d_q)$ to be a metric space where $\Pi_q$ is identical with $\bbt\times\bbr$ as a set and $d_q$ is defined by
\begin{align*}
d_q(x, \tilde x) = 
\begin{cases}
\displaystyle \Big( |\theta-\tilde\theta|^q+|\nu-\tilde\nu|^q \Big)^{\frac1q}, & q\in[1, \infty), \vspace{.1cm}\\
\displaystyle \max\big\{ |\theta-\tilde\theta|, |\nu-\tilde\nu| \big\}, & q=\infty,
\end{cases}
\quad x = (\theta, \nu), \quad \tilde x = (\tilde\theta, \tilde\nu),
\end{align*}
and let $\mathcal{P}(\Pi_q)$ be the set of all Radon probability measures on $\Pi_q$ and for $\rho\in\mathcal{P}(\Pi_q)$, we define
 \[
 \langle \rho,g \rangle:=\int_{\Pi_q} g(z)d\rho(z),\quad \forall ~g\in \mathcal C_0(\Pi_q),
 \]
where $\mathcal C_0$ denotes the set of continuous functions vanishing at infinity. Next, we recall the concept of measure-valued solution to the corresponding kinetic model. 
\begin{definition}\label{D5.1}
	For $T\in (0,\infty]$, $\rho_t\in L^\infty \big( [0,T);\mathcal{P}(\Pi_q) \big)$ is a measure-valued solution to the kinetic GK model \eqref{sys:KGKM} with the initial datum $\rho_0\in\mathcal{P}(\Pi_q)$, if and only if the following assertions hold:
\begin{enumerate}
\item $\{\rho_t \}_{t}$ is weakly continuous in $t$: 
\[
t\mapsto \langle\rho_t,g \rangle~\mbox{is continuous},\quad \forall ~g\in C_0(\Pi_q).
\]

\item $\rho$ satisfies \eqref{sys:KGKM} in a weak sense: for every $\varphi\in \mathcal C_0^1 \big( [0, T) \times \Pi_q \big)$,
\[
\langle \rho_t,\varphi(t,\cdot)\rangle-\langle \rho_0,\varphi(0,\cdot)\rangle=\int_{0}^{t}\left\langle\rho_s,\partial_s\varphi+{\mathfrak L}[\rho_s]\partial_\theta\varphi \right\rangle ds,
\]
where ${\mathfrak L}[\rho]$ is defined as 
\begin{align*}
{\mathfrak L}[\rho](\theta, \nu) := G\bigg( \nu +\kappa\int_{\Pi_q} \sin(\theta_*-\theta) d\rho(\theta_*, \nu_*) \bigg).
\end{align*}
\end{enumerate}
\end{definition}
\begin{remark} \label{R5.1}
Note that the probability distribution $f$ satisfying the kinetic GK model \eqref{sys:KGKM} in strong sense is also a measure-valued solution. Moreover, the empirical measure 
	\begin{equation}\label{particle}
	\rho_t^N:=\frac{1}{N}\sum_{\alpha=1}^N \delta_{\theta_\alpha(t)}\otimes \delta_{\nu_\alpha}
	\end{equation}
	becomes a measure-valued solution of the kinetic GK model \eqref{sys:KGKM}, provided that $\{\theta_\alpha\}_{\alpha=1}^N$ is a solution of the GK model \eqref{sys:GKM} with natural frequency $\{\nu_\alpha\}_{\alpha=1}^N$. Hence, one can rigorously derive the kinetic GK model \eqref{sys:KGKM} from the particle GK model \eqref{sys:GKM} by showing the convergence of a sequence of measure-valued solutions in suitable metric defined in the set of probability measures.
\end{remark}	

Now, we consider a subset of $\mathcal{P}(\Pi_q)$ with finite $q$-{th} moment and denote it by $\mathcal{P}_q(\Pi_q)$:
\[
\mathcal{P}_q(\Pi_q):=\bigg \{\rho: \int_{\Pi_q} d_q(z, z_0)^q ~d\rho(z) <\infty\quad \mbox{for some}~z_0\in \Pi_q \bigg \}.
\]
Then, the $q$-th Wasserstein distance $W_q(\cdot,\cdot)$ on $\mathcal{P}_q(\Pi_q)$ is defined as 
\begin{equation*}\label{Wasser}
\begin{aligned}
W_q (\rho_1, \rho_2) :=\inf_{\gamma\in \Gamma(\rho_1, \rho_2)}\bigg(\int_{\Pi_q\times\Pi_q} d_q(z_1, z_2)^q ~d\gamma(z_1, z_2) \bigg)^{\frac{1}{q}},
\end{aligned}
\end{equation*}
where $\Gamma(\rho_1,\rho_2)$ denotes the familiy of all joint probability measures on $\Pi_q\times\Pi_q$ with marginals $\rho_1$ and $\rho_2$.

\begin{definition} \label{D5.2}
\cite{Ne} The kinetic equation \eqref{sys:KGKM} is the mean-field limit of the particle system \eqref{sys:GKM}, which is valid in $[0,\tau)$, if for some $q\in [1,\infty)$ and $t\in [0,\tau)$, every measure-valued solution $\rho_t$ to \eqref{sys:KGKM} with the initial datum $\rho_0$ satisfies
\[
\lim_{N\to\infty}W_q(\rho_0^N,\rho_0)=0\quad \Longleftrightarrow \quad \lim_{N\to\infty} W_q(\rho_t^N,\rho_t)=0,
\] 
 where $\rho_t^N$ is a measure-valued solution given by \eqref{particle} with initial datum $\rho_0^N$. If $\tau = \infty$, we call the above mean-field limit as a uniform-in-time mean-field limit.
 \end{definition}
We close this subsection with introducing an approximation theorem of $q$-th Wasserstein distance without proofs.
\begin{proposition}\cite{Vi2} \label{P5.1}
 Wasserstein distance $W_q$ has the following properties:
 \begin{enumerate}
 \item $(\mathcal{P}_q(\Pi_q),W_q)$ is a metric space.
 \vspace{0.1cm}
 \item For any given $q\in [1,\infty)$ and $\rho\in \mathcal{P}_q(\Pi_q)$, there exists a sequence of empirical measures $\rho^n=\sum_{\alpha=1}^{n}a_{\alpha}\delta_{\theta=\theta_\alpha}\otimes\delta_{\nu=\nu_\alpha}\in \mathcal{P}_q(\Pi_q)$ such that 
\[
\lim_{N\to\infty}W_q (\rho,\rho^n)=0, \quad a_\alpha\in \mathbb{Q}, \quad \forall~\alpha=1,\cdots,n.
\] 
In particular, if $\rho$ has a compact support, $\rho^n$ can be chosen with additional constraint:
\[
\operatorname{supp}(\rho^n) \subset \operatorname{supp}(\rho), \quad \forall~n\geq1.
\]
\vspace{0.1cm}
\item Let $(\rho^n)_{n\geq1}$ be a sequence of probabilistic measures in $\mathcal{P}_q(\Pi_q)$ and $\rho$ be another element in $\mathcal{P}_q(\Pi_q)$. Then, $\rho^n$ converges to $\rho$ in $W_q$ if and only if
\begin{align*}
\lim_{n\to\infty} \int_{\Pi_q} \varphi(\theta, \nu) d\rho^n(\theta, \nu) = \int_{\Pi_q} \varphi(\theta, \nu) d\rho(\theta, \nu),
\end{align*}
for all continuous $\varphi$ satisfying
\begin{align} \label{E-1}
|\varphi(\theta, \nu)| \leq C(1+|\theta|^q+|\nu|^q), \quad \mbox{for some } C\in\bbr.
\end{align}
 \item For every complete subset $S$ of $\Pi_q$, the metric space  $(\mathcal{P}_q(S),W_q)$ is complete.
\end{enumerate}
\end{proposition}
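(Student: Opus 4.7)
The plan is to verify the four assertions in sequence by invoking classical tools from optimal transport theory; since the statement is quoted from Villani's monograph, the goal is to recall which tool handles each part. For (1), symmetry of $W_q$ follows from symmetry of the cost $d_q^q$ and stability of $\Gamma(\rho_1,\rho_2)$ under marginal swap, while non-negativity is immediate. The triangle inequality uses the gluing lemma: given couplings $\gamma_{12}\in\Gamma(\rho_1,\rho_2)$ and $\gamma_{23}\in\Gamma(\rho_2,\rho_3)$ realizing the respective Wasserstein distances up to $\varepsilon$, one builds a joint measure on $\Pi_q^3$ with the prescribed pairwise marginals, projects to the $(1,3)$-coordinates, and applies Minkowski's inequality in $L^q$ of the joint measure. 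Identity of indiscernibles uses that the infimum is attained (by lower semicontinuity and tightness from fixed marginals) and that any zero-cost coupling must be concentrated on the diagonal. For (4), any $W_q$-Cauchy sequence has a uniform bound on its $q$-th moment, hence is tight by Markov's inequality, and Prokhorov's theorem yields a weakly convergent subsequence whose limit is then identified as the $W_q$-limit using (3).

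For (2), I would first truncate $\rho$ to a ball large enough that the tail has small $q$-th moment contribution, reducing to the compactly supported case. There, partition the support into Borel cells $E_1,\dots,E_n$ of diameter at most $\varepsilon$, pick representatives $(\theta_\alpha,\nu_\alpha)\in E_\alpha$, and set $\rho^n := \sum_{\alpha=1}^n \rho(E_\alpha)\,\delta_{(\theta_\alpha,\nu_\alpha)}$. The transport plan that sends the mass of each cell to its chosen representative has cost at most $\varepsilon^q$, so $W_q(\rho,\rho^n)\le\varepsilon$. The final step is to round the weights $\rho(E_\alpha)$ to rationals summing to one; since the atoms are fixed, the resulting perturbation has small Wasserstein cost, and when $\rho$ has compact support the support-preservation clause is automatic.

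The main obstacle is (3), the topological characterization of $W_q$-convergence. The forward direction, that $W_q(\rho^n,\rho)\to 0$ implies $\langle\rho^n,\varphi\rangle\to\langle\rho,\varphi\rangle$ for continuous $\varphi$ obeying \eqref{E-1}, is handled by fixing an optimal coupling $\gamma^n$ and splitting the difference into a compact part controlled by the modulus of continuity of $\varphi$ and a tail part controlled by H\"older's inequality together with the uniform $q$-th moment bound (itself a consequence of the $W_q$-convergence). The reverse direction is the subtle one: assuming weak convergence plus \eqref{E-1}, apply the latter to the test function $z\mapsto d_q(z,z_0)^q$ to obtain convergence of $q$-th moments, which gives tightness of any sequence of optimal couplings $\gamma^n$; extract a weakly convergent subsequence with limit $\gamma$ whose marginals are both $\rho$, then invoke lower semicontinuity of the cost combined with moment convergence to conclude $\int d_q^q\,d\gamma = 0$, forcing $\gamma$ onto the diagonal and hence $W_q(\rho^n,\rho)\to 0$ along the subsequence; upgrading to the full sequence is standard by contradiction. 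The delicate point throughout is controlling mass escape at infinity, which is precisely why $q$-th moment convergence, rather than mere weak convergence, is required.
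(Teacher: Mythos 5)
The paper offers no proof of this proposition: it is quoted from \cite{Vi2} and explicitly introduced ``without proofs,'' so there is no in-paper argument to compare against; what follows assesses your sketch on its own terms against the standard treatment it is modelled on. Parts (1), (2) and (4) are sound and are exactly the classical arguments: the gluing lemma for the triangle inequality, attainment of the infimum plus concentration of any zero-cost plan on the diagonal for the identity of indiscernibles, cell decomposition with rational rounding of the weights (choosing representatives inside $\operatorname{supp}(\rho)$ to get the support-preservation clause) for (2), and uniform $q$-th moment bounds plus Prokhorov for (4).

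The genuine gap is in the reverse direction of (3). You extract a weak limit $\gamma$ of optimal couplings $\gamma^n\in\Gamma(\rho^n,\rho)$ and claim that ``lower semicontinuity of the cost combined with moment convergence'' yields $\int d_q^q\,d\gamma=0$. Lower semicontinuity gives only $\int d_q^q\,d\gamma\le\liminf_n\int d_q^q\,d\gamma^n$, an inequality pointing the wrong way: it cannot show the right-hand side vanishes, and $\gamma$ is a priori just \emph{some} self-coupling of $\rho$, which in general (e.g.\ $\rho\otimes\rho$) has strictly positive cost. The missing ingredient is the stability of optimal transport plans: a weak limit of optimal plans is itself optimal (proved via cyclical monotonicity), and since the diagonal coupling realizes $W_q^q(\rho,\rho)=0$, every optimal self-coupling is concentrated on the diagonal, whence $\int d_q^q\,d\gamma=0$. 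One then needs actual convergence, not merely lower semicontinuity, of $\int d_q^q\,d\gamma^n$ to conclude $W_q(\rho^n,\rho)\to0$; this follows from uniform integrability of $d_q^q$ under $(\gamma^n)$, using $d_q(z,z')^q\le 2^{q-1}\big(d_q(z,z_0)^q+d_q(z',z_0)^q\big)$ and the convergence of $q$-th moments obtained by applying \eqref{E-1} to $z\mapsto d_q(z,z_0)^q$ --- this is where the moment hypothesis actually enters. With these two repairs the argument closes; as written, the step fails.
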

\subsection{Measure-valued solution}\label{sec:5.2}
In this subsection, we present a rigorous justification of mean-field limit uniformly in time. Here, we refer the mean-field limit procedure in \cite{H-K-P-R-S1,H-K-P-Z, H-K-Z}.
\begin{theorem} \label{T5.1}
Suppose that there exist $\theta_*\in(0, \pi/2)$ and $\nu_l < \nu_r$ such that initial probability measure $\rho_0\in \mathcal{P}(\Pi_q)$ and coupling strengths $\kappa$ satisfy
\begin{align} \label{E-2}
\begin{aligned}
& \kappa \cos\theta_* \Big( \min_{F(a_G) \le y\le F(b_G)} G'(y) \Big)^2 \geq 2 \max_{(x,y)\in[a_G,b_G]^2} {\mathcal Q}^{-1}[F](x,y), \quad a_G = G(\nu_l-\kappa), \\
& b_G = G(\nu_r+\kappa), \quad \kappa \sin\theta_* > \nu_r-\nu_l, \quad \mathrm{supp}(\rho_0) \subset (0, \theta_*) \times (\nu_l, \nu_r) =: S,
\end{aligned}
\end{align}
where $ {\mathcal Q}^{-1}[F]($ is defined in \eqref{C-1}. Then, for $q\in[1, \infty)$, the following assertions hold. 
\begin{enumerate}
	\item There exists a unique measure-valued solution $\rho_t\in L^\infty([0,\infty);\mathcal{P}(\bar S))$ to \eqref{sys:KGKM} with the initial data $\rho_0$ such that $\rho_t$ can be approximated by a sequence of empirical measure-valued solution $\{\rho_t^N \}_{N\geq 1}$:
	\[\lim_{N\rightarrow\infty}\sup_{0 \leq t < \infty} W_{q}(\rho_t,\rho_t^N)=0. \]
	\item There exists a positive constant $\Lambda_4$ independent of $t$ such that for every two measure-valued solutions $\rho_t,\tilde\rho_t$ satisfying \eqref{E-2},  one has 
	\[W_{q}(\rho_t,\tilde\rho_t)\leq \Lambda_4 W_{q}(\rho_0,\tilde\rho_0),\quad \forall~t\geq 0. \]
\end{enumerate}
\end{theorem}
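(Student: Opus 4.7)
The plan is to run the empirical-measure approximation scheme of \cite{H-K-P-R-S1,H-K-P-Z,H-K-Z}, with the uniform-in-time particle stability of Corollary \ref{C3.1} (specialized to $\phi_{\alpha\beta}\equiv 1$, so that $\tilde\Phi=\Phi$ and the network term drops out) supplying the uniform-in-time contraction that upgrades the classical finite-time mean-field limit to a global one. Before anything else, I would verify that the hypothesis \eqref{E-2} is tailored so that for any empirical measure $\rho_0^N=\frac{1}{N}\sum_\alpha\delta_{\theta_\alpha^0}\otimes\delta_{\nu_\alpha}$ with $\mathrm{supp}(\rho_0^N)\subset\bar S$, the triple $(\Theta^0,\cV,\Phi\equiv1)$ fits the framework $(\mathcal F_A)$ of \eqref{C-3}: the support constraint gives $\cD(\Theta^0)\le\theta_*$ and $\cD(\cV)\le \nu_r-\nu_l<\kappa\sin\theta_*$, while $\kappa\cos\theta_*\,m_{G'}^2\ge 2M_{{\mathcal Q}^{-1}[F]}$ is exactly the third inequality of \eqref{C-3} with $m_\Phi=1$.

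Next, for the existence in (1) I would build the solution as a $W_q$-limit. Using Proposition \ref{P5.1}(2), approximate $\rho_0$ by empirical measures with rational weights supported in $\bar S$; reducing rationals to a common denominator yields a sequence of \emph{equal-weight} empirical measures $\rho_0^N=\frac1N\sum_\alpha\delta_{(\theta_\alpha^{0,N},\nu_\alpha^N)}$ with $W_q(\rho_0^N,\rho_0)\to 0$ and support in $\bar S$. For each $N$ solve the GK system \eqref{sys:GKM} (all-to-all coupling) to obtain $\rho_t^N$, which is a measure-valued solution by Remark \ref{R5.1}. The key Cauchy estimate is the following: given $N,M$, duplicate atoms to promote both $\rho_\cdot^N$ and $\rho_\cdot^M$ to empirical measures on $NM$ atoms; by permutation-invariance of \eqref{sys:GKM} with $\phi\equiv1$, the duplicated trajectories are exactly the repeated original trajectories, so no information is lost. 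For these equal-cardinality measures
\[
W_q^q(\rho_t^N,\rho_t^M)\le\frac{1}{NM}\sum_{\alpha=1}^{NM}d_q\big((\theta_\alpha(t),\nu_\alpha),(\tilde\theta_{\sigma(\alpha)}(t),\tilde\nu_{\sigma(\alpha)})\big)^q
\]
for any labeling $\sigma$; taking $\sigma$ to be the optimal coupling at $t=0$ and applying Corollary \ref{C3.1} (noting $\nu_\alpha$ is frozen in time) yields
\[
\sup_{t\ge 0} W_q(\rho_t^N,\rho_t^M)\lesssim W_q(\rho_0^N,\rho_0^M).
\]
Hence $\{\rho_t^N\}$ is Cauchy in $W_q$ uniformly in $t$, and since $(\mathcal P_q(\bar S),W_q)$ is complete (Proposition \ref{P5.1}(4)), there exists $\rho_t\in L^\infty([0,\infty);\mathcal P(\bar S))$ with $\sup_{t\ge 0}W_q(\rho_t^N,\rho_t)\to 0$.

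To show $\rho_t$ is a measure-valued solution in the sense of Definition \ref{D5.1}, I would pass to the limit in the weak formulation satisfied by each $\rho_t^N$. The only nontrivial ingredient is the continuity of $\rho\mapsto \mathfrak L[\rho]$ with respect to $W_q$, which follows from the Lipschitz continuity of $G$ on the compact image of $\bar S$ under $(\theta,\nu)\mapsto \nu+\kappa\int\sin(\theta_*-\theta)d\rho$, together with Proposition \ref{P5.1}(3) (the integrand $\sin(\theta_*-\theta)$ is bounded, hence admissible under \eqref{E-1}). For the stability statement (2), I would first prove it for empirical data (where it reduces directly to Corollary \ref{C3.1} via the same duplication trick), then pass to the limit: given two measure-valued solutions $\rho_t,\tilde\rho_t$ with initial data in $\mathcal P(\bar S)$, approximate both by common-cardinality empirical solutions, apply the particle-level contraction, and use the triangle inequality together with the $W_q$-convergences built in (1).

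The main obstacle I anticipate is the duplication/labeling bookkeeping that permits the particle-level $\ell_q$-stability of Corollary \ref{C3.1} to be converted into a $W_q$-contraction between empirical measures of different sizes; one must be careful that the optimal coupling realized at $t=0$ is respected by the duplicated labeling and that the constant $\Lambda_3$ from \eqref{C3.1} does not blow up as $N\to\infty$ (it does not, because $\Lambda_3$ depends on $\kappa,F,a_\Phi=b_\Phi=1,a_G,b_G,\theta_*$ only, all fixed by \eqref{E-2}). A secondary point requiring care is the identification of the time-regularity required by Definition \ref{D5.1}(i): weak continuity of $t\mapsto\langle\rho_t,g\rangle$ follows from the uniform $W_q$-convergence together with the strong continuity of each empirical flow.
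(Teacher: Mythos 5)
Your overall strategy is the same as the paper's: approximate $\rho_0$ by equal-weight empirical measures supported in $\bar S$, duplicate atoms along an (approximately) optimal rational transport plan so that two approximations become particle systems of equal cardinality, invoke the uniform-in-time $\ell_q$-stability of Corollary \ref{C3.1} with $\phi_{\alpha\beta}\equiv 1$ to get a uniform-in-time Cauchy property in $W_q$, use completeness of $(\mathcal P_q(\bar S),W_q)$ to extract a limit, pass to the limit in the weak formulation via the $W_q$-continuity of $\rho\mapsto\mathfrak L[\rho]$, and deduce (2) by approximation and the triangle inequality. These are exactly Steps A--D of the paper's proof.

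There is, however, one genuine gap in your verification that the duplicated particle systems fit the framework $({\mathcal F}_A)$. You check the diameter conditions and the coupling condition, but $({\mathcal F}_A)$ in \eqref{C-3} also requires $\sum_{\alpha}\nu_\alpha=\sum_{\alpha}\tilde\nu_\alpha$, and this hypothesis is not decorative: it is used essentially in the estimate of $\cI_{1311}$ in Appendix \ref{App-A} (via $\sum_\alpha\big(F(\omega_\alpha)-F(\tilde\omega_\alpha)\big)=\sum_\alpha(\nu_\alpha-\tilde\nu_\alpha)=0$) to produce the dissipative term that makes the stability constant time-independent. Two empirical approximations $\rho_0^N$ and $\rho_0^M$ of the same $\rho_0$ produced by Proposition \ref{P5.1}(2) will in general have different averages $\bar\nu^N\neq\bar\nu^M$, and atom duplication preserves the averages, so the duplicated systems violate this condition and Corollary \ref{C3.1} cannot be applied as you propose. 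The paper repairs this by recentering: it replaces $\tilde\nu^n_\alpha$ with $\nu^n_\alpha:=\tilde\nu^n_\alpha+\bar\nu-\bar\nu^n$ where $\bar\nu=\int\nu\,d\rho_0$ (see \eqref{E-3}), and then checks via \eqref{E-4} that this shift neither destroys the $W_q$-convergence $\rho_0^n\to\rho_0$ nor pushes the support outside $S$ (using that $\mathrm{supp}(\rho_0)$ has positive distance to $\partial S$). You would need to insert this recentering step before the duplication argument; with it, the rest of your outline goes through as written.
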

	\begin{proof} The proof basically follows four steps as in \cite{H-K-P-Z}. \newline
	
\noindent $\bullet$ Step A (Approximation of $\rho_0$ by empirical measures): As a consequence of Proposition \ref{P5.1}(2) and (3), we take a sequence $\{\tilde\rho_0^n\}_{n\geq1}$ of empirical measures such that 
\[
W_{q}(\rho_0, \tilde\rho_0^n) \longrightarrow 0, \quad \bar\nu^n := \int_{\Pi_q} \nu d\tilde\rho_0^n \longrightarrow \bar\nu := \int_{\Pi_q} \nu d\rho_0,
\]
as $n$ goes to infinity. Let us denote $\tilde\rho_0^n$ as
\begin{align*}
\tilde\rho_0^n = \frac{1}{N_n} \sum_{\alpha=1}^{N_n} \delta_{\theta^{n, 0}_\alpha} \otimes \delta_{\tilde\nu^n_\alpha}, \quad N_n\nearrow\infty,
\end{align*}
and define $\rho_0^n$ as
\begin{align} \label{E-3}
\rho_0^n := \frac{1}{N_n} \sum_{\alpha=1}^{N_n} \delta_{\theta^{n, 0}_\alpha} \otimes \delta_{\nu^n_\alpha}, \quad \nu^n_\alpha := \tilde\nu^n_\alpha+\bar\nu-\bar\nu^n.
\end{align}
Let $\varphi$ be a continuous function satisfying \eqref{E-1}. Then for any $\varepsilon>0$, there exists a positive integer $M_1 = M_1(\varepsilon, \varphi)$ such that
\begin{align*}
\bigg| \int_{\Pi_q} \varphi d\rho_0 -\frac{1}{N_n}\sum_{\alpha=1}^{N_n} \varphi(\theta^{n, 0}_\alpha, \tilde\nu^n_\alpha) \bigg| < \frac{\varepsilon}{2}, \quad \forall~n\geq M_1.
\end{align*}
In addition, there exists an $\varepsilon_0 > 0$ such that 
\begin{align*}
| \varphi(\theta_1, \nu_1) -\varphi(\theta_2, \nu_2)| < \frac{\varepsilon}{2}, 
\end{align*}
for all $x_i := (\theta_i, \nu_i)\in[0, \theta_*]\times[\nu_l, \nu_r]$, $i=1, 2$ satisfying $d_q(x_1, x_2) < \varepsilon_0$. Let $\varepsilon_1>0$ be a distance between $\mathrm{supp}(\rho_0)$ and the boundary of $(0, \theta_*)\times(\nu_l, \nu_r)$. We take a positive integer $M_2 = M_2(\varepsilon_0, \varepsilon_1)$ such that
\begin{align*}
|\bar\nu-\bar\nu^n| < \min\{ \varepsilon_0, \varepsilon_1 \}, \quad \forall~n\geq M_2.
\end{align*}
For all $n\geq\max\{M_1, M_2\}$, we obtain
\begin{align} \label{E-4}
\begin{aligned}
& \bigg| \int_{\bbt\times\bbr} \varphi d\rho_0 -\frac{1}{N_n}\sum_{\alpha=1}^{N_n} \varphi(\theta^{n, 0}_\alpha, \nu^n_\alpha) \bigg| \\
&\hspace{1cm} \leq \bigg| \int_{\bbt\times\bbr} \varphi d\rho_0 -\frac{1}{N_n}\sum_{\alpha=1}^{N_n} \varphi(\theta^{n, 0}_\alpha, \tilde\nu^n_\alpha) \bigg| +\frac{1}{N_n}\sum_{\alpha=1}^{N_n} \big| \varphi(\theta^{n, 0}_\alpha, \tilde\nu^n_\alpha) -\varphi(\theta^{n, 0}_\alpha, \nu^n_\alpha) \big| < \varepsilon.
\end{aligned}
\end{align}
As a result, we can take a sequence $\{\rho_0^n\}_{n\geq1}$ of empirical measures such that
\begin{align} \label{E-5}
\lim_{n\to\infty} W_{q}(\rho_0, \rho_0^n) = 0, \quad \int_{\Pi_q} \nu d\rho_0^n = \bar\nu, \quad \operatorname{supp}(\rho_0^n) \subset S, \quad \forall~n\geq1.
\end{align}
This follows from Proposition \ref{P5.1}, \eqref{E-3}, and \eqref{E-4}. Then for any $\varepsilon>0$, there exists a positive integer $N=N(\varepsilon)$ such that 
\[
W_{q}(\rho_0^m,\rho_0^n)<\varepsilon,\quad \mbox{for}~m,n>N(\varepsilon),
\]
since $(\mathcal{P}_q(\bar S),W_q)$ is a complete metric space. \\

\noindent $\bullet$ Step B (Approximation of $W_{q}(\rho_0^m,\rho_0^n)$): Problem of finding the value of $W_q^q(\rho_0^n,\rho_0^m)$ is equivalent to solving the optimization problem:
\begin{align*}
\mbox{minimize} \quad & \frac{1}{N_n N_m}\sum_{\alpha=1}^{N_n} \sum_{\beta=1}^{N_m} a_{\alpha\beta} \left( \big| \theta^{n, 0}_\alpha -\theta^{m, 0}_\beta \big|^q +\big| \nu^n_\alpha-\nu^m_\beta\big|^q \right), \\
\mbox{subjected to} \quad & \sum_{\alpha=1}^{N_n}a_{\alpha\beta}=N_n,\quad \sum_{\beta=1}^{N_m} a_{\alpha\beta}=N_m,\quad a_{\alpha\beta}\geq 0,
\end{align*}
and this linear programming problem always has a solution, namely, there exists $o_{\alpha\beta} \in \bbr$ such that
\begin{equation}\label{E-6}
W_q^q(\rho_0^n,\rho_0^m) = \frac{1}{N_nN_m}\sum_{\alpha=1}^{N_n} \sum_{\beta=1}^{N_m} o_{\alpha\beta} \left( \big| \theta^{n, 0}_\alpha -\theta^{m, 0}_\beta \big|^q +\big| \nu^n_\alpha-\nu^m_\beta\big|^q \right).
\end{equation}
Then, we approximate \eqref{E-6} with rational coefficients $r_{\alpha\beta}$ instead of real coefficients $o_{\alpha\beta}$, satisfying 
\begin{align*}
& |r_{\alpha\beta}-o_{\alpha\beta}|\leq \bigg( \frac{\varepsilon}{\pi/2+\nu_r-\nu_l} \bigg)^q, \quad r_{\alpha\beta}=\frac{M_{\alpha\beta}}{D_{mn}}, \\
& D_{mn}, M_{\alpha\beta}\in\mathbb{Z}_{\geq 0}, \quad \sum_{\alpha=1}^{N_n}r_{\alpha\beta}=N_n,\quad \sum_{\beta=1}^{N_m}r_{\alpha\beta}=N_m,
\end{align*}
so that $\{r_{\alpha\beta}\}_{\alpha,\beta}$ also represents a plan between $\rho_0^n$ and $\rho_0^m$.
Then, it follows from \eqref{E-6} that 
\begin{equation*}
\begin{aligned}
&\bigg|W_q^q(\rho_0^n,\rho_0^m)-\frac{1}{N_nN_m}\sum_{\alpha=1}^{N_n} \sum_{\beta=1}^{N_m} r_{\alpha\beta} \left( \big| \theta^{n, 0}_\alpha -\theta^{m, 0}_\beta \big|^q +\big| \nu^n_\alpha-\nu^m_\beta\big|^q \right) \bigg| \\
&\hspace{.5cm} \leq \frac{1}{N_n N_m} \sum_{\alpha=1}^{N_n} \sum_{\beta=1}^{N_m} |o_{\alpha\beta}-r_{\alpha\beta} | \left( \big| \theta^{n, 0}_\alpha -\theta^{m, 0}_\beta \big|^q +\big| \nu^n_\alpha-\nu^m_\beta\big|^q \right) \\
&\hspace{.5cm} \leq \varepsilon^q\cdot\frac{1}{N_nN_m}\sum_{\alpha=1}^{N_n} \sum_{\beta=1}^{N_m} \frac{\big| \theta^{n, 0}_\alpha -\theta^{m, 0}_\beta \big|^q +\big| \nu^n_\alpha-\nu^m_\beta\big|^q}{(\pi/2+\nu_r-\nu_l)^q} \leq \varepsilon^q,
\end{aligned}
\end{equation*}
where we used $\eqref{E-5}_3$ in the last inequality. Moreover, since each $r_{\alpha\beta}$ is rational number, we may rewrite the approximated value of $W_q^q(\rho_0^n,\rho_0^m)$ as 
\begin{equation}\label{E-7}
\begin{aligned}
& \frac{1}{N_nN_m}\sum_{\alpha=1}^{N_n} \sum_{\beta=1}^{N_m} r_{\alpha\beta} \left( \big| \theta^{n, 0}_\alpha -\theta^{m, 0}_\beta \big|^q +\big| \nu^n_\alpha-\nu^m_\beta\big|^q \right) \\
&\hspace{1cm} =\frac{1}{N_nN_mD_{nm}}\sum_{\alpha=1}^{N_n}\sum_{\beta=1}^{N_m} M_{\alpha\beta} \left( \big| \theta^{n, 0}_\alpha -\theta^{m, 0}_\beta \big|^q +\big| \nu^n_\alpha-\nu^m_\beta\big|^q \right).
\end{aligned}
\end{equation} 
Note that $\sum_{\alpha=1}^{N_n}\sum_{\beta=1}^{N_m} M_{\alpha\beta} = N_nN_mD_{nm} =: N_{nm}$. We define
\begin{align*}
& \theta^{n, m, 0}_{\alpha\beta\gamma} := \theta^{n, 0}_\alpha, \quad \nu^{n, m}_{\alpha\beta\gamma} := \nu^n_\alpha, \quad \tilde\theta^{n, m, 0}_{\alpha\beta\gamma} := \theta^{m, 0}_\beta, \quad \tilde\nu^{n, m}_{\alpha\beta\gamma} := \nu^m_\beta,
\end{align*}
for $\alpha=1, \cdots, N_n$, $\beta=1,\cdots,N_m$, and $\gamma=1,\cdots,M_{\alpha\beta}$. Then, one can rewrite \eqref{E-7} as
\begin{align*}
& \frac{1}{N_nN_m}\sum_{\alpha=1}^{N_n} \sum_{\beta=1}^{N_m} r_{\alpha\beta} \left( \big| \theta^{n, 0}_\alpha -\theta^{m, 0}_\beta \big|^q +\big| \nu^n_\alpha-\nu^m_\beta\big|^q \right) \\
&\hspace{1cm} = \frac{1}{N_{nm}} \sum_{\alpha=1}^{N_n} \sum_{\beta=1}^{N_m} \sum_{\gamma=1}^{M_{\alpha\beta}} \left( \big| \theta^{n, m, 0}_{\alpha\beta\gamma} -\tilde\theta^{n, m, 0}_{\alpha\beta\gamma} \big|^q +\big| \nu^{n,m}_{\alpha\beta\gamma} -\tilde\nu^{n, m}_{\alpha\beta\gamma} \big|^q \right).
\end{align*}
In summary, we have
\begin{align*}
\bigg|W_q^q(\rho_0^n,\rho_0^m) -\frac{1}{N_{nm}} \sum_{\alpha=1}^{N_n} \sum_{\beta=1}^{N_m} \sum_{\gamma=1}^{M_{\alpha\beta}} \left( \big| \theta^{n, m, 0}_{\alpha\beta\gamma} -\tilde\theta^{n, m, 0}_{\alpha\beta\gamma} \big|^q +\big| \nu^{n,m}_{\alpha\beta\gamma} -\tilde\nu^{n, m}_{\alpha\beta\gamma} \big|^q \right) \bigg| \leq \varepsilon^q.
\end{align*}

\noindent $\bullet$ Step C (Application of the $\ell_q$-stability estimate): 
Let $\{\theta^n_\alpha(t)\}_{\alpha=1}^{N_n}$ be a global smooth solution to the following Cauchy problem:
\begin{align*}
\begin{cases}
\displaystyle \dot\theta_\alpha = G\bigg( \nu^{n}_\alpha +\frac{\kappa}{N_n} \sum_{\beta=1}^{N_n} \sin(\theta_\beta - \theta_\alpha) \bigg), \quad \forall ~t>0, \vspace{.1cm} \\
\theta_\alpha \Big|_{t=0+} = \theta^{n, 0}_\alpha, \quad \forall ~\alpha = 1,\cdots,N_n,
\end{cases}
\end{align*}
and similarly, let $\{\theta^m_\alpha(t)\}_{\alpha=1}^{N_m}$ be a global smooth solution to the following Cauchy problem:
\begin{align*}
\begin{cases}
\displaystyle \dot\theta_\alpha = G\bigg( \nu^{m}_\alpha +\frac{\kappa}{N_m} \sum_{\beta=1}^{N_m} \sin(\theta_\beta - \theta_\alpha) \bigg), \quad \forall ~t>0, \vspace{.1cm} \\
\theta_\alpha \Big|_{t=0+} = \theta^{m, 0}_\alpha, \quad \forall ~\alpha = 1,\cdots,N_m.
\end{cases}
\end{align*}
Note that
\begin{align*}
\rho^n_t := \frac{1}{N_n} \sum_{\alpha=1}^{N_n} \delta_{\theta^n_\alpha(t)} \otimes \delta_{\nu^n_\alpha} \quad \mbox{and} \quad \rho^m_t := \frac{1}{N_m} \sum_{\alpha=1}^{N_m} \delta_{\theta^m_\alpha(t)} \otimes \delta_{\nu^m_\alpha}
\end{align*}
are measure-valued solutions of \eqref{sys:KGKM} with initial measures $\rho_0^n$ and $\rho_0^m$, respectively. If we define $\theta^{n, m}_{\alpha\beta\gamma}(t) := \theta^n_\alpha(t)$ and $\tilde\theta^{n, m}_{\alpha\beta\gamma}(t) := \theta^m_\beta(t)$ for $\alpha=1, \cdots, N_n$, $\beta=1,\cdots,N_m$, and $\gamma=1,\cdots,M_{\alpha\beta}$, then we can estimate $W_q^q(\rho_t^n,\rho_t^m)$ using the plane $\{r_{\alpha\beta}\}_{\alpha,\beta}$:
\begin{align*}
W_q^q(\rho_t^n,\rho_t^m) &\leq \frac{1}{N_nN_m} \sum_{\alpha=1}^{N_n} \sum_{\beta=1}^{N_m} r_{\alpha\beta} \left( \big| \theta^n_\alpha(t) -\theta^m_\beta(t) \big|^q +\big| \nu^n_\alpha -\nu^m_\beta \big|^q \right) \\
&= \frac{1}{N_{nm}} \sum_{\alpha=1}^{N_n} \sum_{\beta=1}^{N_m} \sum_{\gamma=1}^{M_{\alpha\beta}} \left( \big| \theta^{n, m}_{\alpha\beta\gamma}(t) -\tilde\theta^{n, m}_{\alpha\beta\gamma}(t) \big|^q +\big| \nu^{n, m}_{\alpha\beta\gamma} -\tilde\nu^{n, m}_{\alpha\beta\gamma} \big|^q \right).
\end{align*}
To apply the $\ell_q$-stability obtained in Section \ref{sec:3}, we note
\begin{align*}
\frac{d\theta^{n, m}_{\alpha\beta\gamma}}{dt} &= \frac{d\theta^n_\alpha}{dt} = G\bigg( \nu^{n}_\alpha +\frac{\kappa}{N_n} \sum_{\sigma_1=1}^{N_n} \sin\big( \theta^n_{\sigma_1}(t) - \theta^n_\alpha(t) \big) \bigg) \\
&= G\bigg( \nu^{n, m}_{\alpha\beta\gamma} +\frac{\kappa}{N_{nm}} \sum_{\sigma_1=1}^{N_n} N_mD_{nm} \sin\big( \theta^n_{\sigma_1}(t) - \theta^{n, m}_{\alpha\beta\gamma}(t) \big) \bigg) \\
&= G\bigg( \nu^{n, m}_{\alpha\beta\gamma} +\frac{\kappa}{N_{nm}} \sum_{\sigma_1=1}^{N_n} \sum_{\sigma_2=1}^{N_m} \sum_{\sigma_3=1}^{M_{\sigma_1\sigma_2}} \sin\big( \theta^{n, m}_{\sigma_1\sigma_2\sigma_3}(t) - \theta^{n, m}_{\alpha\beta\gamma}(t) \big) \bigg)
\end{align*}
to find that $\big\{ \theta^{n, m}_{\alpha\beta\gamma} \big\}_{\alpha,\beta,\gamma}$ is a solution of the following Cauchy problem:
\begin{align*}
\begin{cases}
\displaystyle \dot\theta_{\alpha\beta\gamma} = G\bigg( \nu^{n, m}_{\alpha\beta\gamma} +\frac{\kappa}{N_{nm}} \sum_{\sigma_1=1}^{N_n} \sum_{\sigma_2=1}^{N_m} \sum_{\sigma_3=1}^{M_{\sigma_1\sigma_2}} \sin\big( \theta_{\sigma_1\sigma_2\sigma_3} - \theta_{\alpha\beta\gamma} \big) \bigg), \quad \forall ~t>0, \vspace{.1cm} \\
\theta_{\alpha\beta\gamma} \Big|_{t=0+} = \theta^{n, m, 0}_{\alpha\beta\gamma}, \quad \forall ~\alpha = 1,\cdots,N_n, \quad \forall~\beta=1,\cdots,N_m, \quad \forall~\gamma=1,\cdots,M_{\alpha\beta}.
\end{cases}
\end{align*}
In the same way, $\big\{ \tilde\theta^{n, m}_{\alpha\beta\gamma} \big\}_{\alpha,\beta,\gamma}$ becomes a solution of the following Cauchy problem:
\begin{align*}
\begin{cases}
\displaystyle \dot\theta_{\alpha\beta\gamma} = G\bigg( \tilde\nu^{n, m}_{\alpha\beta\gamma} +\frac{\kappa}{N_{nm}} \sum_{\sigma_1=1}^{N_n} \sum_{\sigma_2=1}^{N_m} \sum_{\sigma_3=1}^{M_{\sigma_1\sigma_2}} \sin\big( \theta_{\sigma_1\sigma_2\sigma_3} - \theta_{\alpha\beta\gamma} \big) \bigg), \quad \forall ~t>0, \vspace{.1cm} \\
\theta_{\alpha\beta\gamma} \Big|_{t=0+} = \tilde\theta^{n, m, 0}_{\alpha\beta\gamma}, \quad \forall ~\alpha = 1,\cdots,N_n, \quad \forall~\beta=1,\cdots,N_m, \quad \forall~\gamma=1,\cdots,M_{\alpha\beta}.
\end{cases}
\end{align*}
Hence, one can use Corollary \ref{C3.1} to obtain
\begin{align*}
& W_q^q(\rho_t^n,\rho_t^m) \leq \frac{1}{N_{nm}} \sum_{\alpha=1}^{N_n} \sum_{\beta=1}^{N_m} \sum_{\gamma=1}^{M_{\alpha\beta}} \big| \theta^{n, m}_{\alpha\beta\gamma}(t) -\tilde\theta^{n, m}_{\alpha\beta\gamma}(t) \big|^q +\frac{1}{N_{nm}} \sum_{\alpha=1}^{N_n} \sum_{\beta=1}^{N_m} \sum_{\gamma=1}^{M_{\alpha\beta}} \big| \nu^{n, m}_{\alpha\beta\gamma} -\tilde\nu^{n, m}_{\alpha\beta\gamma} \big|^q \\
&\hspace{.5cm}\leq \frac{\Lambda_3^q}{N_{nm}} \left[ \bigg( \sum_{\alpha=1}^{N_n} \sum_{\beta=1}^{N_m} \sum_{\gamma=1}^{M_{\alpha\beta}} \big| \theta^{n, m, 0}_{\alpha\beta\gamma} -\tilde\theta^{n, m, 0}_{\alpha\beta\gamma} \big|^q \bigg)^{\frac1q} +\bigg( \sum_{\alpha=1}^{N_n} \sum_{\beta=1}^{N_m} \sum_{\gamma=1}^{M_{\alpha\beta}} \big| \nu^{n, m}_{\alpha\beta\gamma} -\tilde\nu^{n, m}_{\alpha\beta\gamma} \big|^q \bigg)^{\frac1q} \right]^q\\
&\hspace{.7cm} +\frac{1}{N_{nm}} \sum_{\alpha=1}^{N_n} \sum_{\beta=1}^{N_m} \sum_{\gamma=1}^{M_{\alpha\beta}} \big| \nu^{n, m}_{\alpha\beta\gamma} -\tilde\nu^{n, m}_{\alpha\beta\gamma} \big|^q \\
&\hspace{.7cm}\leq \frac{1+2^{q-1} \Lambda_3^q}{N_{nm}} \sum_{\alpha=1}^{N_n} \sum_{\beta=1}^{N_m} \sum_{\gamma=1}^{M_{\alpha\beta}} \Big( \big| \theta^{n, m, 0}_{\alpha\beta\gamma} -\tilde\theta^{n, m, 0}_{\alpha\beta\gamma} \big|^q +\big| \nu^{n, m}_{\alpha\beta\gamma} -\tilde\nu^{n, m}_{\alpha\beta\gamma} \big|^q \Big) \\
&\hspace{.5cm} \leq \big( 1+2^{q-1} \Lambda_3^q \big) \cdot \big(W_q^q(\rho_0^n,\rho_0^m) +\varepsilon^q \big),
\end{align*}
where we used the result from the previous step in the last inequality. Therefore, for $n, m > N(\varepsilon)$, we have
\begin{align*}
W_q(\rho_t^n,\rho_t^m) \leq \big( 2+2^q \Lambda_3^q \big)^{\frac1q} \varepsilon \leq 2(1+\Lambda_3) \varepsilon,
\end{align*}
which implies that $\{\rho_t^n \}_{n\geq 1}$ is Cauchy in $\big( \mathcal{P}_q(\bar S), W_{q} \big)$. Since this metric space is complete, we can find a limit measure $\rho_t$: 
\begin{align*}
W_q(\rho_t^n,\rho_t)\leq 2(1+\Lambda_3) \varepsilon, \quad \forall~n>N(\varepsilon) \implies \lim_{n\to\infty}\sup_{t\geq0}W_{q}(\rho_t^n,\rho_t)=0.
\end{align*}
Since $W_q$-convergence implies the weak-convergence, one-parameter family $\{\rho_t\}_{t\geq 0}$ is indeed a measure-valued solution of \eqref{sys:KGKM}, by using Lebesgue dominant convergence theorem. Since it is easy to see the weak continuity in Definition \ref{D5.1}, we only show the second condition in Definition \ref{D5.1}. It is enough to show that
for $\varphi\in \mathcal{C}^1_0([0,T)\times \Pi_q)$,
\[
\lim_{n\to \infty}\int_0^t \langle \rho_s^n , \partial_s\varphi + {\mathfrak L}[\rho_s^n] \partial_\theta \varphi \rangle =
\int_0^t \langle \rho_s , \partial_s\varphi + {\mathfrak L}[\rho_s] \partial_\theta \varphi \rangle.
\]
Note that 
\begin{align*}
&\lim_{n\to \infty}\int_0^t \langle \rho_s^n , \partial_s\varphi + {\mathfrak L}[\rho_s^n] \partial_\theta \varphi \rangle 
= \lim_{n\to \infty}  \int_0^t  \int_{\Pi_q} \big( \partial_s\varphi + {\mathfrak L}[\rho_s^n] \partial_\theta \varphi \big) d\rho^n_s ds \\
&\hspace{0.5cm} =   \int_0^t \lim_{n\to \infty} \left[ \int_{\Pi_q} \big( \partial_s\varphi + {\mathfrak L}[\rho_s^n] \partial_\theta \varphi \big) d\rho^n_s \right]  ds 
{=}   \int_0^t \left( \partial_s\varphi + \lim_{n\to \infty}  \int_{\Pi_q} \big(  {\mathfrak L}[\rho_s^n] \partial_\theta \varphi \big) d\rho^n_s  \right)ds,
\end{align*}
where we used the Lebesgue dominated convergence theorem in the last second equality. Hence, all we need to show is
\[
	 \lim_{n\to\infty} \int_{\Pi_q} \big(  {\mathfrak L}[\rho_s^n] \partial_\theta \varphi \big) d\rho^n_s
	 = \int_{\Pi_q} \big(  {\mathfrak L}[\rho_s] \partial_\theta \varphi \big) d\rho_s.
\]
We observe
\begin{align*}
& \int_{\Pi_q} \big( {\mathfrak L}[\rho_s^n] \partial_\theta \varphi \big) d\rho^n_s 
-  \int_{\Pi_q} \big(  {\mathfrak L}[\rho_s] \partial_\theta \varphi \big) d\rho_s\\
&\hspace{1cm} = \underbrace{\int_{\Pi_q} \big(  {\mathfrak L}[\rho_s^n] - {\mathfrak L}[\rho_s] \big) \partial_\theta \varphi d\rho^n_s  }_{=:\mathcal{J}_1}
+ \underbrace{ \int_{\Pi_q} \big(  {\mathfrak L}[\rho_s] \partial_\theta \varphi \big) d\rho^n_s
-  \int_{\Pi_q} \big(  {\mathfrak L}[\rho_s] \partial_\theta \varphi \big) d\rho_s}_{=:\mathcal{J}_2}.
\end{align*}
The term $\mathcal{J}_2$ goes to zero as $n$ goes to the infinity. For the term $\mathcal{J}_1$, we have
\begin{align*}
\left| \int_{\Pi_q} \big(  {\mathfrak L}[\rho_s^n] - {\mathfrak L}[\rho_s] \big) \partial_\theta \varphi d\rho^n_s\right|
\leq \sup_{(\theta,\nu)} |({\mathfrak L}[\rho_s^n](\theta,\nu) - {\mathfrak L}[\rho_s](\theta,\nu)) \partial_\theta\varphi(\theta,\nu)|,
\to 0
\end{align*}
as $n$ goes to infinity, since
\begin{align*}
&\sup_{(\theta,\nu)} |({\mathfrak L}[\rho_s^n](\theta,\nu) - {\mathfrak L}[\rho_s](\theta,\nu)) \partial_\theta\varphi(\theta,\nu)|\\
& \hspace{0.5cm} = \sup_{(\theta,\nu)} |\partial_\theta\varphi(\theta,\nu)|\left| G\left( \nu + \kappa \int_{\Pi_q} \sin(\theta_* -\theta) d\rho^n_s(\theta_*,\nu_*)\right) \right.\\
&\hspace{3.5cm}\left.-G\left( \nu + \kappa \int_{\Pi_q} \sin(\theta_* -\theta) d\rho_s(\theta_*,\nu_*)\right) 
\right| \\
&\hspace{0.5cm} \leq M_{\partial_\theta \varphi} M_{G'} \kappa \sup_{(\theta,\nu)} \left| \int_{\Pi_q} \sin(\theta_* -\theta) d\rho_s^n -\int_{\Pi_q} \sin(\theta_* -\theta) d\rho_s \right| \\
&\hspace{0.5cm} \leq M_{\partial_\theta \varphi} M_{G'} \kappa \sup_{(\theta,\nu)} \left|
\cos\theta \left( \int_{\Pi_q} \sin\theta_* d\rho_s^n - \int_{\Pi_q} \sin\theta_* d\rho_s  \right)\right.\\
&\hspace{3.5cm}\left.-\sin\theta \left( \int_{\Pi_q} \cos\theta_* d\rho_s^n - \int_{\Pi_q} \cos\theta_* d\rho_s   \right)
\right| \\
&\hspace{0.5cm} \lesssim \left| \int_{\Pi_q} \sin\theta_* d\rho_s^n - \int_{\Pi_q} \sin\theta_* d\rho_s  \right|
+\left| \int_{\Pi_q} \cos\theta_* d\rho_s^n - \int_{\Pi_q} \cos\theta_* d\rho_s   \right|,
\end{align*}
where $\sup_{(\theta,\nu)} |\partial_\theta\varphi(\theta,\nu)| =: M_{\partial_\theta \varphi}$.

\vspace{0.2cm}

\noindent $\bullet$ Step D (Uniform-in-time stability of kinetic equation): For given two measures $\rho_0$ and $\tilde\rho_0$ in $\mathcal{P}(\bar S)$, let $\rho_t$ and $\tilde\rho_t$ be two measure-valued solutions of \eqref{sys:KGKM} subject to initial data $\rho_0$ and $\tilde\rho_0$, respectively. Then, for any  positive $\varepsilon\ll 1$, find positive integer $\tilde N = \tilde N(\varepsilon)$ such that 
\[W_{q}(\rho,\rho^n_t)<\frac{\varepsilon}{2},\quad W_{q}(\tilde\rho_t,\tilde\rho_t^n)<\frac{\varepsilon}{2},\quad n\geq \tilde N(\varepsilon). \]
Moreover, one can use same argument in the previous step to obtain
\begin{align*}
W_{q}^q(\rho_t^n,\tilde\rho_t^n) \leq \big( 1+2^{q-1} \Lambda_3^q \big) \cdot \big(W_q^q(\rho_0^n,\tilde\rho_0^n) +\varepsilon^q \big).
\end{align*}
Hence, we have
\[\begin{aligned}
W_{q}^q(\rho_t,\tilde\rho_t)&\leq \Big( W_{q}(\rho_t,\rho_t^n)+W_{q}(\rho_t^n,\tilde\rho_t^n)+W_{q}(\tilde\rho_t^n, \tilde\rho_t) \Big)^q\\
&\leq \Big( \varepsilon+W_{q}(\rho_t^n, \tilde\rho_t^n) \Big)^q \leq 2^{q-1} \Big(\varepsilon^q+W_{q}^q(\rho_t^n,\tilde\rho_t^n) \Big)\\
&\leq 2^{q-1}\varepsilon^q +2^{q-1} \big( 1+2^{q-1} \Lambda_3^q \big) \cdot \big(W_q^q(\rho_0^n,\tilde\rho_0^n) +\varepsilon^q \big)
\end{aligned} \]
We let $\varepsilon$ go to $0$, and then let $n$ go to infinity to obtain the uniform $W_{q}$-stability: 
\[W_q(\rho_t,\tilde\rho_t)\leq \big( 2^{\frac{q-1}{q}}+4^{\frac{q-1}{q}} \Lambda_3 \big) W_q(\rho_0, \tilde\rho_0), \quad \forall~t\geq 0. \]
This also implies the uniqueness of the measure-valued solution. 
\end{proof}

%
%
%
%
\section{Conclusion} \label{sec:6}
\setcounter{equation}{0}
In this paper, we have  presented three issues for the GK model. First, we provided the uniform $\ell_p$-stability of the particle GK model. Although there are no conservation laws, we use the exponentially fast convergence of velocity to bound the $\ell_p$-distance between two phase configurations with different initial data, natural frequency, and communication matrix. Second, we formally derived the continuum GK model from the lattice GK model and proved the uniform-in-time continuum limit from the lattice GK model to the derived continuum GK model. Moreover, we showed that the continuum model exhibits $L^{\infty}$-contraction which coincides with analogous results for the lattice model.  Third, we have derived the kinetic GK model from the GK model and proved the uniform-in-time mean-field limit using the uniform stability estimate. All frameworks presented in this paper are sufficient ones, so certainly our results are not optimal. There will be some rooms for improvements. For example, uniform stability estimates hold for some class of restricted initial data, and  the derivation of uniform-in-time asymptotic limits depends on the uniform stability estimate, hence they are valid only for restricted initial data. For generic initial data, it is not clear whether uniform-in-time asymptotic limits hold or not. Some instability issues might also emerge. We leave this issue for a future work.

%
%
%
%
\appendix
\section{Estimate (\textit{ii}) in Lemma \ref{L3.2}} \label{App-A}
\setcounter{equation}{0}
First, we split $\cI_{13}$ into two terms:
\begin{align}
\begin{aligned} \label{Ap-1}
\mathcal{I}_{13} &= \frac{p\kappa}{N}\sum_{\alpha,\beta=1}^N \phi_{\alpha\beta} \sgn\big( \omega_\alpha - \tilde\omega_\alpha \big) \big| \omega_\alpha - \tilde\omega_\alpha \big|^{p-1} \cos(\tilde\theta_\beta - \tilde\theta_\alpha) G'(\nu_c) \big((\omega_\beta - \omega_\alpha) - (\tilde\omega_\beta - \tilde\omega_\alpha) \big) \\
&\hspace{.2cm} +\frac{p\kappa}{N} \sum_{\alpha,\beta=1}^N \phi_{\alpha\beta} \sgn\big( \omega_\alpha - \tilde\omega_\alpha \big) \big| \omega_\alpha - \tilde\omega_\alpha \big|^{p-1} \cos(\tilde\theta_\beta - \tilde\theta_\alpha) \bigg( \frac{1}{F'(\tilde\omega_\alpha)} - G'(\nu_c)\bigg) \\ &\hspace{.4cm}\times\big((\omega_\beta - \omega_\alpha) - (\tilde\omega_\beta - \tilde\omega_\alpha) \big) \\
&=: \mathcal{I}_{131} + \mathcal{I}_{132}.
\end{aligned}
\end{align}
Next, we estimate the term ${\mathcal I}_{13i}$ one by one.  \newline

\noindent $\bullet$~(Estimate of $\mathcal{I}_{131}$): Note that 
\[
 \big((\omega_\alpha - \tilde\omega_\alpha) - (\omega_\beta - \tilde\omega_\beta) \big) \big( \sgn\big( \omega_\alpha - \tilde\omega_\alpha \big) \big| \omega_\alpha - \tilde\omega_\alpha \big|^{p-1} - \sgn\big( \omega_\beta - \tilde\omega_\beta \big) \big| \omega_\beta - \tilde\omega_\beta \big|^{p-1} \big) \ge 0,
\]
and we set 
\[ C_{131} = G'(\nu_c)m_\Phi\cos\theta_*. \] 
Then, one has 
\begin{align}
\begin{aligned} \label{Ap-2}
&\mathcal{I}_{131} = -\frac{p\kappa G'(\nu_c)}{2N}\sum_{\alpha,\beta=1}^N \Big[ \phi_{\alpha\beta}\cos(\tilde\theta_\beta - \tilde\theta_\alpha)  \big((\omega_\alpha - \tilde\omega_\alpha) - (\omega_\beta - \tilde\omega_\beta)\big)\\
&\hspace{2cm}\times \Big( \sgn\big( \omega_\alpha - \tilde\omega_\alpha \big) \big| \omega_\alpha - \tilde\omega_\alpha \big|^{p-1} - \sgn\big( \omega_\beta - \tilde\omega_\beta \big) \big| \omega_\beta - \tilde\omega_\beta \big|^{p-1} \Big)  \Big] \\
&\le -\frac{p\kappa C_{131}}{2N}\sum_{\alpha,\beta=1}^N \Big[  \Big(  {\mathcal Q}^{-1}[F](\omega_\alpha, \tilde\omega_\alpha) \big(F(\omega_\alpha) -F(\tilde\omega_\alpha)\big) - {\mathcal Q}^{-1}[F](\omega_\beta, \tilde\omega_\beta) \big( F(\omega_\beta) -F(\tilde\omega_\beta) \big) \Big) \\
&\hspace{2cm}\times\Big( \sgn\big( \omega_\alpha - \tilde\omega_\alpha \big) \big| \omega_\alpha - \tilde\omega_\alpha \big|^{p-1} - \sgn\big( \omega_\beta - \tilde\omega_\beta \big) \big| \omega_\beta - \tilde\omega_\beta \big|^{p-1} \Big)  \Big]\\
&= -\frac{p\kappa G'(\nu_c) C_{131}}{2N}\sum_{\alpha,\beta=1}^N \Big[ \Big( \sgn\big( \omega_\alpha - \tilde\omega_\alpha \big) \big| \omega_\alpha - \tilde\omega_\alpha \big|^{p-1} - \sgn\big( \omega_\beta - \tilde\omega_\beta \big) \big| \omega_\beta - \tilde\omega_\beta \big|^{p-1} \Big) \\
&\hspace{4cm}\times  \Big( \big[F(\omega_\alpha) -F(\tilde\omega_\alpha)\big] -\big[ F(\omega_\beta) -F(\tilde\omega_\beta) \big] \Big) \Big] \\
&\hspace{.2cm} -\frac{p\kappa C_{131}}{2N} \sum_{\alpha,\beta=1}^N \Big[ \Big( \sgn\big( \omega_\alpha - \tilde\omega_\alpha \big) \big| \omega_\alpha - \tilde\omega_\alpha \big|^{p-1} - \sgn\big( \omega_\beta - \tilde\omega_\beta \big) \big| \omega_\beta - \tilde\omega_\beta \big|^{p-1} \Big) \\
&\hspace{3cm} \times \Big( \big[  {\mathcal Q}^{-1}[F](\omega_\alpha, \tilde\omega_\alpha) -G'(\nu_c) \big] \big[F(\omega_\alpha) -F(\tilde\omega_\alpha)\big] \\
&\hspace{3.7cm} -\big[  {\mathcal Q}^{-1}[F](\omega_\beta, \tilde\omega_\beta) -G'(\nu_c) \big] \big[ F(\omega_\beta) -F(\tilde\omega_\beta) \big] \Big) \Big] \\
&=: \cI_{1311} +\cI_{1312}.
\end{aligned}
\end{align}
Below, we further estimate the term ${\mathcal I}_{131i}$. \newline

\noindent $\diamond$~(Estimate of $\cI_{1311}$):~We use \eqref{C-3-2} and
\begin{align*}
\sum_{\alpha=1}^N \big( F(\omega_\alpha) -F(\tilde\omega_\alpha) \big) = \sum_{\alpha=1}^N (\nu_\alpha-\tilde\nu_\alpha) = 0
\end{align*}
to see
\begin{align}
\begin{aligned} \label{Ap-3}
\cI_{1311} &= -\frac{p\kappa G'(\nu_c)C_{131}}{2N}\sum_{\alpha,\beta=1}^N \Big[ \Big( \sgn\big( \omega_\alpha - \tilde\omega_\alpha \big) \big| \omega_\alpha - \tilde\omega_\alpha \big|^{p-1} - \sgn\big( \omega_\beta - \tilde\omega_\beta \big) \big| \omega_\beta - \tilde\omega_\beta \big|^{p-1} \Big) \\
&\hspace{4cm}\times  \Big( \big[F(\omega_\alpha) -F(\tilde\omega_\alpha)\big] -\big[F(\omega_\beta) -F(\tilde\omega_\beta) \big] \Big) \Big] \\
&= -p\kappa G'(\nu_c) C_{131} \sum_{\alpha=1}^N \big|F(\omega_\alpha) -F(\tilde\omega_\alpha)\big| \cdot \big| \omega_\alpha - \tilde\omega_\alpha \big|^{p-1} \\
&\leq -\frac{p\kappa G'(\nu_c) C_{131}}{M_{ {\mathcal Q}^{-1}[F]}} \|\Omega - \tilde\Omega\|^p_p.
\end{aligned}
\end{align}
\vspace{0.2cm}

\noindent $\diamond$~(Estimate of $\cI_{1312}$):~we choose  $\tilde c_\alpha$ between $\omega_\alpha$ and $\tilde\omega_\alpha$ such that
\begin{align} 
\begin{aligned} \label{Ap-4}
&\big|  {\mathcal Q}^{-1}[F] \big( \omega_\alpha, \tilde\omega_\alpha \big) -G'(\nu_c) \big| \\
& \hspace{0.5cm} = \big| G'\big( F(\tilde c_\alpha) \big) -G'(\nu_c) \big| = \big| {\mathcal Q}[G^{\prime}] \big( F(\tilde c_\alpha), \nu_c \big) \big( F(\tilde c_\alpha) -\nu_c \big) \big| \\
&\hspace{.5cm} \leq M_{{\mathcal Q}[G^{\prime}] } \big| F(\tilde c_\alpha) -\nu_c \big| \leq M_{{\mathcal Q}[G^{\prime}]} \big( |F(\omega_\alpha) -\nu_c| +|F(\tilde\omega_\alpha) -\nu_c| \big) \\
& \hspace{0.5cm} \leq \frac{M_{{\mathcal Q}[G^{\prime}] }}{m_{ {\mathcal Q}^{-1}[F]}} \big[ \mathcal{D}(\Omega^0) +\mathcal{D}(\tilde\Omega^0) \big] e^{-\Lambda_1 t},
\end{aligned}
\end{align} 
where we used Lemma \ref{L3.1} in the last inequality. Hence, $\cI_{1312}$ can be estimated by
\begin{align}
\begin{aligned} \label{Ap-4-1}
\cI_{1312} &= -\frac{p\kappa C_{131}}{2N}\sum_{\alpha,\beta=1}^N \Big[ \Big( \sgn\big( \omega_\alpha - \tilde\omega_\alpha \big) \big| \omega_\alpha - \tilde\omega_\alpha \big|^{p-1} - \sgn\big( \omega_\beta - \tilde\omega_\beta \big) \big| \omega_\beta - \tilde\omega_\beta \big|^{p-1} \Big) \\
&\hspace{3.2cm}\times \Big( \big[ {\mathcal Q}^{-1}[F] (\omega_\alpha, \tilde\omega_\alpha) -G'(\nu_c) \big] \big[F(\omega_\alpha) -F(\tilde\omega_\alpha)\big] \\
&\hspace{4cm} -\big[ {\mathcal Q}^{-1}[F](\omega_\beta, \tilde\omega_\beta) -G'(\nu_c) \big] \big[ F(\omega_\beta) -F(\tilde\omega_\beta) \big] \Big) \Big] \\
&\leq \frac{p\kappa C_{131}}{N}\sum_{\alpha,\beta=1}^N \big| \omega_\alpha - \tilde\omega_\alpha \big|^{p-1} \big| {\mathcal Q}^{-1}[F](\omega_\alpha, \tilde\omega_\alpha) -G'(\nu_c) \big| \big|F(\omega_\alpha) -F(\tilde\omega_\alpha)\big| \\
&\hspace{.2cm} +\frac{p\kappa C_{131}}{N}\sum_{\alpha,\beta=1}^N \big| \omega_\alpha - \tilde\omega_\alpha \big|^{p-1} \big| {\mathcal Q}^{-1}[F](\omega_\beta, \tilde\omega_\beta) -G'(\nu_c) \big| \big|F(\omega_\beta) -F(\tilde\omega_\beta)\big| \\
&\overset{\eqref{Ap-4}}{\leq} \frac{p\kappa M_{{\mathcal Q}[G^{\prime}]} C_{131}}{Nm_{{\mathcal Q}^{-1}[F]}} \big[ \mathcal{D}(\Omega^0) +\mathcal{D}(\tilde\Omega^0) \big] e^{-\Lambda_1 t} \sum_{\alpha,\beta=1}^N \big| \omega_\alpha - \tilde\omega_\alpha \big|^{p-1} \big|F(\omega_\alpha) -F(\tilde\omega_\alpha)\big| \\
&\hspace{.2cm} +\frac{p\kappa M_{{\mathcal Q}[G^{\prime}]} C_{131}}{Nm_{{\mathcal Q}^{-1}[F]}} \big[ \mathcal{D}(\Omega^0) +\mathcal{D}(\tilde\Omega^0) \big] e^{-\Lambda_1 t} \sum_{\alpha,\beta=1}^N \big| \omega_\alpha - \tilde\omega_\alpha \big|^{p-1} \big|F(\omega_\beta) -F(\tilde\omega_\beta)\big| \\
&\leq \frac{12p\kappa^2 b_\Phi^2 M_{G'}^2 M_{{\mathcal Q}[G^{\prime}]}}{m^2_{{\mathcal Q}^{-1}[F]}} e^{-\Lambda_1 t} \|\Omega - \tilde\Omega\|^p_p,
\end{aligned}
\end{align}
where the last inequality can be obtained by \eqref{C-6} and
\begin{align}
\begin{aligned}  \label{Ap-5}
& \sum_{\alpha,\beta=1}^N \big| \omega_\alpha - \tilde\omega_\alpha \big|^{p-1} \big| \omega_\beta-\tilde\omega_\beta \big| \\
&\hspace{0.5cm} \leq \bigg( \sum_{\alpha,\beta=1}^N |\omega_\alpha - \tilde\omega_\alpha |^{p-1\frac{p}{p-1}}  \bigg)^{\frac{p-1}{p}} \bigg(\sum_{\alpha,\beta=1}^N |\omega_\beta - \tilde\omega_\beta|^p \bigg)^\frac{1}{p} = N \|\Omega - \tilde\Omega\|^p_p.
\end{aligned}
\end{align}
In \eqref{Ap-2}, we combine \eqref{Ap-3} and  \eqref{Ap-4-1} to find 
\begin{equation} \label{Ap-5-1}
\mathcal{I}_{131} \leq -\frac{p\kappa G'(\nu_c) C_{131}}{M_{ {\mathcal Q}^{-1}[F]}} \|\Omega - \tilde\Omega\|^p_p + \frac{12p\kappa^2 b_\Phi^2 M_{G'}^2 M_{{\mathcal Q}[G^{\prime}]}}{m^2_{{\mathcal Q}^{-1}[F]}} e^{-\Lambda_1 t} \|\Omega - \tilde\Omega\|^p_p.
\end{equation}

\vspace{0.5cm}

\noindent $\bullet$~(Estimate of $\mathcal{I}_{132}$):~For this, we use Lemma \ref{L3.1} to find
\[
\bigg| \frac{1}{F'(\tilde\omega_\alpha)} - G'(\nu_c)\bigg| = \big| G'(F(\tilde\omega_\alpha)) - G'(\nu_c) \big| \le M_{{\mathcal Q}[G^{\prime}]}\big| F(\tilde\omega_\alpha) -\nu_c \big| \le \frac{M_{{\mathcal Q}[G^{\prime}]}}{m_{{\mathcal Q}^{-1}[F]}} \mathcal{D}(\tilde\Omega^0) e^{-\Lambda_1 t}.  
\]
This yields
\begin{align}
\begin{aligned} \label{Ap-6}
|\mathcal{I}_{132}| &\leq \frac{p\kappa}{N} \bigg| \sum_{\alpha,\beta=1}^N \phi_{\alpha\beta} \big| \omega_\alpha - \tilde\omega_\alpha \big|^{p-1}  \bigg( \frac{1}{F'(\tilde\omega_\alpha)} - G'(\nu_c)\bigg) \big((\omega_\beta - \omega_\alpha) - (\tilde\omega_\beta - \tilde\omega_\alpha) \big) \bigg| \\
&\le \frac{p\kappa b_\Phi M_{ {\mathcal Q}[G^{\prime}}}{Nm_{{\mathcal Q}^{-1}[F]}} \mathcal{D}(\tilde\Omega^0) e^{-\Lambda_1t} \sum_{\alpha,\beta=1}^N |\omega_\alpha - \tilde\omega_\alpha|^{p-1}\big( |\omega_\alpha - \tilde\omega_\alpha| + |\omega_\beta - \tilde\omega_\beta| \big)\\
&\le \frac{6p\kappa^2 b_\Phi^2 M_{G'} M_{ {\mathcal Q}[G^{\prime}]}}{m_{{\mathcal Q}^{-1}[F]}} e^{-\Lambda_1t} \|\Omega - \tilde\Omega\|^{p}_p.
\end{aligned}
\end{align}
Here, the last inequality is followed from \eqref{C-6} and \eqref{Ap-5}. Finally, in \eqref{Ap-1},  we combine estimates in \eqref{Ap-5-1} and \eqref{Ap-6} to find 
\begin{align*}
\cI_{13} \leq -\frac{p\kappa G'(\nu_c) C_{131}}{M_{{\mathcal Q}^{-1}[F] }} \|\Omega - \tilde\Omega\|^p_p +\bigg( \frac{12p\kappa^2 b_\Phi^2 M_{G'}^2 M_{{\mathcal Q}[G^{\prime}]}}{m^2_{{\mathcal Q}^{-1}[F] }} +\frac{6p\kappa^2 b_\Phi^2 M_{G'} M_{{\mathcal Q}[G^{\prime}]}}{m_{{\mathcal Q}^{-1}[F] }} \bigg) e^{-\Lambda_1t} \|\Omega - \tilde\Omega\|^{p}_p,
\end{align*}
where $C_{131} = G'(\nu_c)m_\Phi\cos\theta_*$. This completes the proof.
\vspace{0.5cm}

\section{Proof of Proposition \ref{P4.1}} \label{App-B}
\setcounter{equation}{0}
We only verify the first part of Proposition \ref{P4.1}, since the second part can be treated using the same argument. We integrate the both sides of \eqref{sys:CGKM}$_1$ in time to obtain
\begin{equation*}
\theta(t,x) = \theta^0(x) + \int_0^t G \bigg( \nu(x) +\frac{\kappa}{\mu(D)}\int_D \phi(x,z)\sin(\theta(s,z)-\theta(s,x)) d\mu(z) \bigg) ds.
\end{equation*}
Before we define a solution space, we set two positive constants:
\begin{align*}
M :=  \|\nu\|_\infty +\frac{\kappa \|\phi\|_{L^{\infty}_x L^1_y}}{\mu(D)},\quad M^*:= \frac{\kappa }{\mu(D)} \Big( \sup_{-M<x<M} G'(x) \Big).
\end{align*}
Now, we define a closed subset $\mathcal{S}=\mathcal{S}_{\zeta,\theta^0}$ of the Banach space $\mathcal{C}\big( [0,\zeta]; L^\infty (D) \big)$ to find a local solution to the continuum GK model \eqref{sys:CGKM}:
$$ \mathcal{S}_{\zeta,\theta^0} := \big\{ \theta\in C([0,\zeta); L^\infty(D) \big) : \theta(0, x) = \theta^0(x), ~\mbox{a.e. } x\in D \big\}, \quad \zeta = \frac{1}{4M^* \|\phi\|_{L^{\infty}_x L^1_y}}, $$
equipped with the corresponding norm
$$\| \theta \|_{\mathcal{S}} := \sup_{0\le t \le \zeta} \| \theta(t,\cdot)\|_{\infty}.$$
It is easy to check that $\mathcal S$ is closed. Let $(\theta_n)_{n\geq1}$ be a convergent sequence in $\mathcal S$. Then, there exists $\theta^*\in\mathcal{C}\big( [0,\zeta); L^\infty (D) \big)$ such that
\begin{align*}
\lim_{n\to\infty} \sup_{0\le t\le \zeta} \|\theta_n(t,\cdot)-\theta^*(t,\cdot)\|_\infty = 0.
\end{align*}
This implies 
\[ \|\theta^0-\theta^*(0,\cdot)\|_\infty = 0 \]
so that $\theta^*\in\mathcal S$. \newline

Next, we introduce a nonlinear map $\mathcal{L}: {\mathcal S} \longrightarrow {\mathcal S}$ defined by
\begin{align} \label{Bp-1}
\begin{aligned}
\mathcal{L}[\theta](t,x) := \theta^0(x) + \int_0^t G \left( \nu(x) +\frac{\kappa}{\mu(D)}\int_D \phi(x,z)\sin(\theta(s,z)-\theta(s,x)) d\mu(z) \right) ds.
\end{aligned}
\end{align}
Note that the structure of $\mathcal L$ implies that a fixed point $\theta\in\mathcal S$ of $\mathcal L$ has $\mathcal C^1$-regularity with respect to time $t\in[0, \zeta]$ so that  $\theta$ becomes a local classical solution to the continuum GK model \eqref{sys:CGKM}. In the following lemma, we show that $\mathcal L$ is a contraction.
\begin{lemma} \label{LB.1}
$\mathcal{L}$ is a contraction on the closed subset $\mathcal{S}$. That is, there exists a constant $C\in(0, 1)$ such that
\begin{align*}
\begin{aligned}
\big\| \mathcal{L}[\theta_1]-\mathcal{L}[\theta_2] \big\|_{\mathcal{S}}
\le C \| \theta_1 - \theta_2 \|_{\mathcal{S}},
\end{aligned}
\end{align*}
for all $\theta_1, \theta_2 \in \mathcal S$.
\end{lemma}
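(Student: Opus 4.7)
The plan is to establish the contraction estimate by a direct pointwise bound on $\mathcal{L}[\theta_1] - \mathcal{L}[\theta_2]$, exploiting the mean value theorem for $G$ together with the Lipschitz property of $\sin$. The choice of $\zeta = 1/(4M^*\|\phi\|_{L^\infty_x L^1_y})$ is tailored exactly to make the resulting Lipschitz constant strictly less than one, so this is essentially a calibration argument rather than a genuinely hard estimate.

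First I would verify that the arguments of $G$ encountered inside the definition of $\mathcal{L}$ lie in a fixed bounded interval that does not depend on $\theta_1,\theta_2\in\mathcal{S}$. Since $|\sin(\cdot)|\le 1$, we get
\begin{equation*}
\left|\nu(x)+\frac{\kappa}{\mu(D)}\int_D \phi(x,z)\sin(\theta(s,z)-\theta(s,x))\,d\mu(z)\right|\le \|\nu\|_\infty+\frac{\kappa\|\phi\|_{L^\infty_x L^1_y}}{\mu(D)}=M,
\end{equation*}
for every $\theta\in\mathcal{S}$, so $G$ is evaluated only at points of $(-M,M)$, where it is $\mathcal{C}^1$ and $G'$ is bounded by $\sup_{|y|<M}G'(y)$.

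Next I apply the mean value theorem to $G$ at these two arguments, followed by $|\sin a-\sin b|\le |a-b|$ inside the difference. This yields, for every $t\in[0,\zeta]$ and $x\in D$,
\begin{align*}
&\bigl|\mathcal{L}[\theta_1](t,x)-\mathcal{L}[\theta_2](t,x)\bigr| \\
&\quad\le \int_0^t \Bigl(\sup_{|y|<M}G'(y)\Bigr)\cdot\frac{\kappa}{\mu(D)}\int_D \phi(x,z)\bigl|\sin(\theta_1(s,z)-\theta_1(s,x))-\sin(\theta_2(s,z)-\theta_2(s,x))\bigr|\,d\mu(z)\,ds \\
&\quad\le \int_0^t M^*\int_D \phi(x,z)\bigl(|\theta_1(s,z)-\theta_2(s,z)|+|\theta_1(s,x)-\theta_2(s,x)|\bigr)\,d\mu(z)\,ds \\
&\quad\le 2 M^*\|\phi\|_{L^\infty_x L^1_y}\int_0^t \|\theta_1(s,\cdot)-\theta_2(s,\cdot)\|_\infty\,ds \\
&\quad\le 2 M^*\|\phi\|_{L^\infty_x L^1_y}\,\zeta\,\|\theta_1-\theta_2\|_{\mathcal{S}}.
\end{align*}
Taking the essential supremum in $x$ and the supremum in $t\in[0,\zeta]$, and substituting $\zeta=1/(4M^*\|\phi\|_{L^\infty_x L^1_y})$, produces $\|\mathcal{L}[\theta_1]-\mathcal{L}[\theta_2]\|_{\mathcal{S}}\le \tfrac{1}{2}\|\theta_1-\theta_2\|_{\mathcal{S}}$, so one may take $C=\tfrac{1}{2}$.

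Before closing, I would briefly check that $\mathcal{L}$ actually sends $\mathcal{S}$ into itself: the integrand $G(\nu(x)+\cdots)$ is bounded (uniformly in $\theta\in\mathcal{S}$) by $\sup_{|y|\le M}|G(y)|$, so $\mathcal{L}[\theta]\in L^\infty(D)$ for each $t$, the integral is Lipschitz in $t$ (hence continuous into $L^\infty(D)$), and $\mathcal{L}[\theta](0,x)=\theta^0(x)$ by construction. The only subtle point — and the closest thing to an obstacle — is making sure the argument of $G$ always stays in the domain where $G$ is smooth; this is guaranteed by the $M$-bound above together with the standing structural hypotheses on $F$ in the paper, so no additional restriction on $\zeta$ is needed.
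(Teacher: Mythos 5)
Your proof is correct and follows essentially the same route as the paper's: a pointwise mean value theorem bound on $G$, the Lipschitz estimate $|\sin a-\sin b|\le|a-b|$ (the paper phrases this via the product formula $\sin a-\sin b=2\sin\frac{a-b}{2}\cos\frac{a+b}{2}$, but it is the same bound), and the calibration $2\zeta M^*\|\phi\|_{L^\infty_x L^1_y}=\tfrac12$. Your additional checks that the argument of $G$ stays in $[-M,M]$ and that $\mathcal{L}$ maps $\mathcal{S}$ into itself are sensible verifications that the paper leaves implicit, not a departure from its argument.
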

\begin{proof}
For $x\in\Omega$ and $t\in[0,\zeta]$, we use the mean value theorem and \eqref{Bp-1} to obtain 
\begin{align*}
\begin{aligned}
& \big|\mathcal{L}[\theta_1](t,x)-\mathcal{L}[\theta_2](t,x)\big|  \\
&\hspace{0.5cm} \le M^* \int_0^t   \int_D \phi(x,z) \big|  \sin(\theta_1(s,z)-\theta_1(s,x)) -\sin(\theta_2(s,z)-\theta_2(s,x)) \big| d\mu(z) ds \\
&\hspace{.5cm} \le M^* \int_0^t   \int_D 2\phi(x,z) \bigg{|} \sin\left(\frac{\theta_1(s,z)-\theta_1(s,x)-\theta_2(s,z)+\theta_2(s,x)}{2}\right) \bigg{|} d\mu(z) ds \\
&\hspace{.5cm} \le M^* \int_0^t   \int_D \phi(x,z) \big{|} \theta_1(s,z)-\theta_2(s,z)+\theta_2(s,x)-\theta_1(s,x) \big{|} d\mu(z) ds \\
&\hspace{.5cm} \leq 2M^* \| \theta_1 - \theta_2 \|_{\mathcal S} \int_0^t \int_D \phi(x,z) d\mu(z) ds \\
&\hspace{.5cm} \leq  2\zeta M^* \|\phi\|_{L^{\infty}_x L^1_y} \| \theta_1 - \theta_2 \|_{\mathcal S} = \frac12 \| \theta_1 - \theta_2 \|_{\mathcal S}.
\end{aligned}
\end{align*}
This gives the desired result.
\end{proof}

\noindent {\it Proof of Proposition \ref{P4.1}}: We use Lemma \ref{LB.1} to obtain a unique fixed point of $\mathcal L$ in $\mathcal{S}$. This admits a local classical solution to the continuum GK model \eqref{sys:CGKM} over $D \times [0,\zeta]$. Since $\zeta$ is independent on the initial data, one can extend this local solution over next time-zone $D \times [\zeta ,2\zeta]$ using the same argument. In this way, we can extend local solution to any time interval to get the global solution. This completes the proof.

\section{Finite-in-time continuum limit} \label{App-C}
\setcounter{equation}{0}
In this appendix, we provide a proof of local-in-time continuum limit. 
\begin{theorem}\label{TC.1}
Suppose that $\kappa > 0$ and that there exists $b_\phi > 0$ such that $\|\phi\|_\infty < b_\phi$. Let $\theta \in \mathcal C^1 \big( [0, \infty); L^\infty(D) \big)$ be a global classical solution to the continuum GK model \eqref{sys:CGKM} given in Theorem \ref{T4.1} and let $\{\theta^N\}_{N\geq1}$ be a sequence of simple functions defined in \eqref{D-3}. Then for any finite $\tau>0$, we have
\begin{align*}
\begin{aligned}
\lim_{N\to\infty}\sup_{0 \leq t<\tau} \big\| \theta(t,\cdot)-\theta^N(t,\cdot)\big\|_{L^1} = 0.
\end{aligned}
\end{align*}
\end{theorem}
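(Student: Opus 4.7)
The plan is to derive a Gr\"onwall-type inequality for $\|\theta(t,\cdot)-\theta^N(t,\cdot)\|_{L^1}$ and to combine it with an $L^1$-convergence statement for the data approximations. First, I observe that since $\nu^N$, $\phi^N$ and $\theta^{N,0}$ are cellwise averages, they satisfy $\|\nu^N\|_\infty\le\|\nu\|_\infty$, $\|\phi^N\|_\infty\le b_\phi$, and $\|\theta^{N,0}\|_\infty\le\|\theta^0\|_\infty$. Consequently, the arguments of $F$ in \eqref{sys:CGKM} stay, uniformly in $N$ and $(t,x)$, in a fixed interval $[F(a_G),F(b_G)]$, so $\partial_t\theta$ and $\partial_t\theta^N$ lie in $[a_G,b_G]\subset(-L,L)$ and $G'=1/F'$ is bounded there by a constant $M_{G'}$.

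Setting $e^N(t,x):=\theta(t,x)-\theta^N(t,x)$ and applying the mean-value theorem to $G$ together with $|\sin a-\sin b|\le|a-b|$, I obtain the pointwise bound
\[
|\partial_t e^N(t,x)|\le M_{G'}\Bigl(|\nu-\nu^N|(x)+\tfrac{\kappa}{\mu(D)}\!\int_D|\phi-\phi^N|(x,z)\,d\mu(z)+\tfrac{\kappa b_\phi}{\mu(D)}\!\int_D\bigl(|e^N(t,z)|+|e^N(t,x)|\bigr)d\mu(z)\Bigr).
\]
Integrating in $x$ over $D$ and noting that $\|e^N(t,\cdot)\|_{L^1}\le\|e^N(0,\cdot)\|_{L^1}+\int_0^t\|\partial_s e^N(s,\cdot)\|_{L^1}\,ds$, Gr\"onwall's lemma gives, for every $t\in[0,\tau]$,
\[
\|e^N(t,\cdot)\|_{L^1}\le e^{2\kappa b_\phi M_{G'}\tau}\Bigl(\|\theta^0-\theta^{N,0}\|_{L^1}+M_{G'}\tau\,\|\nu-\nu^N\|_{L^1}+\tfrac{\kappa M_{G'}\tau}{\mu(D)}\|\phi-\phi^N\|_{L^1(D\times D)}\Bigr).
\]

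The main obstacle is the final step, namely showing that the three data discrepancies on the right tend to zero in $L^1$, since the continuity framework $(\mathcal{F}_B2)$ of Theorem \ref{T4.1} has been dropped here. The key point is that the averaging operator $f\mapsto f^N$ is a contraction on $L^1(D)$ by Jensen's inequality, and that on the compact $D$ the diameter of each cell $D^N_\alpha$ tends to zero uniformly in $\alpha$. Given $\varepsilon>0$, I approximate $\nu$ by a continuous $\tilde\nu$ with $\|\nu-\tilde\nu\|_{L^1}<\varepsilon/3$; uniform continuity of $\tilde\nu$ on $D$ gives $\|\tilde\nu-\tilde\nu^N\|_\infty\to 0$, hence also in $L^1$, and the $L^1$-contraction property of the averaging together with the triangle inequality then yield $\|\nu-\nu^N\|_{L^1}<\varepsilon$ for $N$ large. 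This is precisely the $L^1$ Lebesgue differentiation/martingale-convergence argument. The same reasoning, applied to the refining product partitions $\{D^N_\alpha\times D^N_\beta\}$, handles $\phi-\phi^N$ and, analogously, $\theta^0-\theta^{N,0}$. Combining these three convergences with the Gr\"onwall bound above yields the desired $\sup_{0\le t<\tau}\|\theta(t,\cdot)-\theta^N(t,\cdot)\|_{L^1}\to 0$.
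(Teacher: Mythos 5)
Your proposal is correct and follows essentially the same route as the paper: a pointwise bound via the mean value theorem applied to $G$, integration over $D$ to get a differential inequality for $\|\theta(t,\cdot)-\theta^N(t,\cdot)\|_{L^1}$, Gr\"onwall's lemma, and then convergence of the data discrepancies $\|\nu-\nu^N\|_{L^1}$, $\|\phi-\phi^N\|_{L^1}$, $\|\theta^0-\theta^{N,0}\|_{L^1}$. The only difference is in that last step, where the paper simply cites the Lebesgue differentiation theorem while you give a density-plus-$L^1$-contraction argument for the cell-averaging operator; your version is, if anything, more robust, since the cells $D^N_\alpha$ of the partition property $(\mathcal{F}_B1)$ are only required to have vanishing diameter and need not shrink ``nicely'' in the sense required by the classical differentiation theorem.
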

\begin{proof} Note that 
\begin{align}
\begin{aligned} \label{Ac-1}
&\partial_t \big( \theta(t, x) -\theta^N(t, x) \big) \\
& \hspace{0.5cm}  =
G\bigg(\nu(x) + \frac{\kappa}{\mu(D)}\int_D \phi(x,z) \sin\big(\theta(t, z) - \theta(t, x)\big)dz \bigg)\\
& \hspace{0.7cm} - G\bigg(\nu^N(x) + \frac{\kappa}{\mu(D)}\int_D \phi^N(x,z) \sin\big(\theta^N(t, z) - \theta^N(t, x)\big)dz \bigg) \\
& \hspace{0.5cm} \leq M_{G'} \bigg| \nu(x) -\nu^N(x) +\frac{\kappa}{\mu(D)} \int_D \big( \phi(x,z) -\phi^N(x,z)\big) \sin\big(\theta^N(t, z) - \theta^N(t, x)\big) dz \\
& \hspace{0.7cm}+\frac{\kappa}{\mu(D)} \int_D\phi(x,z)\big(\sin\big(\theta(t, z) - \theta(t, x)\big) - \sin\big(\theta^N(t, z) - \theta^N(t, x)\big)  \big) dz
\bigg|.
\end{aligned}
\end{align}
We multiply the both side by $\sgn\big(\theta(t, x)-\theta^N(t, x)\big)$ and integrate \eqref{Ac-1} over $D$ to get
\begin{align*}
\begin{aligned}
\partial_t \big\| \theta(t, \cdot) -\theta^N(t, \cdot) \big\|_{L^1} \le M_{G'} \bigg( \|\nu - \nu^N\|_{L^1}+\frac{\kappa}{\mu(D)}\|\phi - \phi^N\|_{L^1} +2b_\phi \kappa\| \theta(t, \cdot) - \theta^N(t, \cdot)\|_{L^1} \bigg).
\end{aligned}
\end{align*}
This implies
\[  \sup_{0<t<\tau} \|\theta - \theta^N\|_{L^1} \le e^{a \tau} \| \theta^{N,0} - \theta^0 \|_{L^1} + \frac{b}{a}(e^{a \tau}-1), \]
where 
\[  a=2M_{G'} b_\phi \kappa, \quad b=M_{G'}\bigg( \|\nu-\nu^N\|_{L^1} + \frac{\kappa}{\mu(D)}\| \phi - \phi^N\|_{L^1} \bigg). \]
We use Lebesgue differentiation theorem to get
\begin{align*}
\lim_{N\to\infty} \| \theta^{N,0} - \theta^0 \|_{L^1} = 0, \quad \lim_{N\to\infty}b=0,
\end{align*}
which implies our desired result.
\end{proof}

\bibliographystyle{amsplain}

\end{document}